\documentclass{amsart}
\setlength{\parindent}{0 in}
\setlength{\topmargin}{-.5in}
\setlength{\textheight}{9.5in}
\setlength{\textwidth}{6in}
\setlength{\oddsidemargin}{0.25in}

\usepackage{amssymb, mathtools}

\usepackage{tikz}
\usepackage{amsthm} 
\usepackage{mathrsfs} 
\usepackage{ wasysym } 
\usepackage{verbatim} 
\usepackage{amsfonts}
\usepackage{multicol}
\usepackage{graphicx}
\usepackage{enumitem}   
\usepackage{hyperref}
\usepackage{tikz-cd}
\usepackage{tabularx}
\usepackage{listings}
\usepackage[backend=biber,style=numeric,sorting=nyt,maxbibnames=99]{biblatex}
\usepackage{subfig}
\usepackage{xcolor}
\usepackage[margin=1in]{geometry}

\addbibresource{../../ResearchJournalBib.bib}

\newcounter{theorem}
\newtheorem{theorem}[theorem]{Theorem}
\newtheorem*{theorem*}{Theorem}
\newtheorem{corollary}[theorem]{Corollary}
\newtheorem*{corollary*}{Corollary}
\newtheorem{lemma}[theorem]{Lemma}
\newtheorem{proposition}[theorem]{Proposition}
\numberwithin{theorem}{section}

\theoremstyle{definition}
\newtheorem{definition}[theorem]{Definition}
\newtheorem{example}[theorem]{Example}


\theoremstyle{remark}

\newtheorem{remark}[theorem]{Remark}

\newtheoremstyle{Question}{}{}{}{}{\color{red}\bfseries}{}{ }{}
\theoremstyle{Question}

\newcommand{\field}[1]{\ensuremath{\mathbb #1}}
\newcommand{\CC}{\field C}
\newcommand{\FF}{\field F}

\newcommand{\QQ}{\field Q}

\newcommand{\RR}{\field R}
\newcommand{\ZZ}{\field Z}

\DeclareMathOperator{\cl}{\textit{c} \ell}

\DeclareMathOperator{\End}{End}

\DeclareMathOperator{\disc}{disc}
\DeclareMathOperator{\Gal}{Gal}
\DeclareMathOperator{\Ell}{\mathcal{E}\ell \ell}

\definecolor{codegreen}{rgb}{0,0.6,0}
\definecolor{codegray}{rgb}{0.5,0.5,0.5}
\definecolor{codepurple}{rgb}{0.58,0,0.82}
\definecolor{backcolour}{rgb}{0.95,0.95,0.92}

\lstdefinestyle{mystyle}{
    backgroundcolor=\color{backcolour},   
    commentstyle=\color{codegreen},
    keywordstyle=\color{magenta},
    numberstyle=\tiny\color{codegray},
    stringstyle=\color{codepurple},
    basicstyle=\ttfamily\footnotesize,
    breakatwhitespace=false,         
    breaklines=true,                 
    captionpos=b,                    
    keepspaces=true,                 
    numbers=left,                    
    numbersep=5pt,                  
    showspaces=false,                
    showstringspaces=false,
    showtabs=false,                  
    tabsize=2
}

\lstset{style=mystyle}

\title{Distribution of cycles in supersingular $\ell$-isogeny graphs}

\author{Eli Orvis}
\address{University of Colorado Boulder, Boulder, Colorado, USA}
\email{eli.orvis@colorado.edu}
\urladdr{https://euclid.colorado.edu/~wior7645/HTML/home.html}

\date{\today}

\thanks{Eli Orvis is currently supported by NSF-CAREER CNS-1652238 (PI Katherine E. Stange).}

\keywords{Elliptic curves, isogeny graphs, cryptography, supersingular curves}

\subjclass{11G05, 11T71, 14G50, 14K02, 94A60.}

\begin{document}

\begin{abstract}
Recent work by Arpin, Chen, Lauter, Scheidler, Stange, and Tran \cite{Arpin+22} counted the number of cycles of length $r$ in supersingular $\ell$-isogeny graphs. In this paper, we extend this work to count the number of cycles that occur along the spine. We provide formulas for both the number of such cycles, and the average number as $p \to \infty$, with $\ell$ and $r$ fixed. In particular, we show that when $r$ is not a power of $2$, cycles of length $r$ are disproportionately likely to occur along the spine. We provide experimental evidence that this result holds in the case that $r$ is a power of $2$ as well.
\end{abstract}

\maketitle

\section{Introduction}\label{SecIntroduction}

Supersingular isogeny graphs have enjoyed great interest in the past decade, primarily due to their applications to the study of modular forms \cite{Cowan22} and their use in constructing post-quantum cryptographic primitives. For distinct primes $p, \ell$, we denote the directed graph whose vertices are $\overline{\FF_p}$-isomorphism classes of supersingular elliptic curves, and whose edges are isogenies of degree $\ell$ up to post-composition by an automorphism by $\mathcal{G}_{p, \ell}$. We will use simply $\mathcal{G}_\ell$ when there is no risk of confusion. These graphs have $(\ell + 1)$-regular out-degree, $\lfloor(p-1)/12\rfloor + \epsilon$ vertices where $\epsilon \in \{0,1,2\}$, and Pizer has proven that they are Ramanujan \cite{Pizer90}. \\ 

Avoiding the existence of small cycles in $\mathcal{G}_{p,\ell}$ is advantageous in many isogeny-based cryptographic protocols. Charles, Goren, and Lauter described a method for guaranteeing that there are no small cycles in $\mathcal{G}_{p,\ell}$ when they introduced the CGL hash function, which was the first isogeny-based protocol \cite{Charles+06}. In order to assess the soundness of SIDH-based identification protocols, Ghantous, Katsumata, Pintore, and Veroni obtained upper bounds on the expected number of cycles of a given length at a random vertex of $\mathcal{G}_{p,\ell}$ \cite{Ghantous+21}. More recently, Arpin, Chen, Lauter, Scheidler, Stange, and Tran obtained a formula for the exact number of cycles of a given length in $\mathcal{G}_{p,\ell}$ in terms of class numbers of imaginary quadratic fields \cite{Arpin+22}. \\

The \emph{spine} of the graph, introduced by Arpin, Camacho-Navarro, Lauter, Lim, Newlso, Scholl, and Sot\'akov\'a in \cite{Arpin+19}, is another graph-theoretic feature of interest to isogenists. The spine is the subgraph induced by the vertices whose $j$-invariants are defined over $\mathbb{F}_p$, or alternatively, the subgraph induced by the set of vertices fixed by the Frobenius automorphism. It is known that for a density $1$ set of primes $\ell$, Frobenius is the only automorphism of the graph \cite{Mayo23}. In most cases therefore, the set of vertices of the spine is the set of fixed points of the only non-trivial symmetry of $\mathcal{G}_{p,\ell}$. We denote the spine by $\mathcal{S}_{p,\ell}$. \\

In this paper we consider the distribution of cycles within the graph, in particular with respect to the spine. We study the ratio of cycles of length $r$ that intersect the spine, for odd $r$, and show that as $p \to \infty$, the proportion of cycles of length $r$ that intersect the spine eventually depends only on the residue class of $p$ modulo a large, but computable integer. In particular, we introduce the notation $n_t$ and $n_s$ for the total number of $r$-cycles in $\mathcal{G}_{p, \ell}$, and the total number of $r$-cycles in $\mathcal{G}_{p, \ell}$ that intersect the spine, respectively, and we prove the following theorem:

\begin{theorem*}[Theorem \ref{EventualDistributionThm}]
Let $\ell$ and $r$ be fixed, with $r$ odd. Then there exists a modulus $M$ depending on $\ell$ and $r$ such that for $p \gg 0$, $n_t$ and $n_s$ depend only on $p$ modulo $M$.
\end{theorem*}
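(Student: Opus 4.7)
The plan is to leverage the explicit formula of Arpin, Chen, Lauter, Scheidler, Stange, and Tran. For $r$ odd, their result expresses $n_t$ as a finite sum
$$n_t \;=\; \sum_{|t| < 2\ell^{r/2}} \;\sum_{f^2 \mid t^2 - 4\ell^r} h\!\left(\tfrac{t^2-4\ell^r}{f^2}\right) \cdot \delta_{p}(t,f),$$
where the outer sum ranges over possible traces of norm-$\ell^r$ endomorphisms, $h$ denotes the class number of an imaginary quadratic order, and $\delta_{p}(t,f)$ is a local embedding factor at $p$ encoding whether the order $\ZZ[\alpha]$ with $\alpha^2 - t\alpha + \ell^r = 0$ embeds optimally into a maximal order of the quaternion algebra $B_{p,\infty}$. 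The class numbers are independent of $p$, so the only $p$-dependence lives in the local factors $\delta_{p}(t,f)$, and these are governed by the splitting behaviour of $p$ in the imaginary quadratic fields $K_t = \QQ\!\left(\sqrt{t^2 - 4\ell^r}\right)$. The oddness of $r$ is used to ensure that every contributing $\alpha$ is primitive, so no extra terms from shorter cycles intrude.

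The splitting of $p$ in each $K_t$ is determined by the Kronecker symbol $\left(\tfrac{t^2 - 4\ell^r}{p}\right)$, which depends only on $p$ modulo $4|t^2 - 4\ell^r|$; the refinement by the conductor $f$ depends on $p$ modulo $4 f^2 |t^2 - 4\ell^r|$. Since $t$ and $f$ range over a finite set depending only on $\ell$ and $r$, taking
$$M_0 \;=\; \operatorname{lcm}\bigl\{\,4 f^2 |t^2 - 4\ell^r| \;:\; |t| < 2\ell^{r/2},\; f^2 \mid t^2 - 4\ell^r\,\bigr\},$$
I conclude that for $p$ larger than every $|t^2 - 4\ell^r|$ in the sum, $n_t$ depends on $p$ only through $p \bmod M_0$.

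The analogous argument for $n_s$ adds the condition that the cycle passes through a vertex whose endomorphism ring contains $\ZZ[\pi]$ with $\pi^2 = -p$, placing it on the spine. By Deuring's correspondence and Eichler's counting of simultaneous optimal embeddings, $n_s$ becomes a sum of products of local embedding numbers for the pair of orders $\ZZ[\alpha]$ and $\ZZ[\pi]$ inside a maximal order of $B_{p,\infty}$. At finite primes $q \neq p$ coprime to $2\ell(t^2-4\ell^r)$ the local terms are trivial; at the remaining ramified primes they depend on $p$ only through congruence conditions of bounded modulus, dictated again by splitting in the compositum $K_t \cdot \QQ(\sqrt{-p})$.

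The main obstacle is the local contribution at $p$ itself, where $\ZZ[\pi]$ sits naturally but $B_{p,\infty}$ is ramified, so that the usual Eichler formula does not apply verbatim. I expect this to reduce, via the classification of suborders of the unique quaternion division algebra over $\QQ_p$ and the fact that $\pi$ is a uniformiser there, to a count depending on $p$ only through the splitting type of $K_t$ at $p$, i.e.\ again through $p$ modulo $4|t^2-4\ell^r|$. Once this local computation is in hand, setting $M$ to be the lcm of $M_0$ with the moduli controlling all biquadratic splitting conditions produces an integer depending only on $\ell$ and $r$ such that both $n_t$ and $n_s$ are determined by $p \bmod M$ for $p \gg 0$.
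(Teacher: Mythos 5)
Your treatment of $n_t$ is essentially the paper's: the finite list of orders $\mathcal{O}_{(t^2-4\ell^r)/f^2}$ is the set $\mathcal{I}_{\ell,r}$ of Section \ref{SecPreliminaries}, each contributes $0$ or $2h(\mathcal{O})/r$ cycles according to whether $p$ splits in $\mathcal{O}$, and quadratic reciprocity turns this into a congruence on $p$ of modulus dividing $4|\disc(\mathcal{O})|$. That half is sound. The genuine gap is in the $n_s$ half, and it sits exactly where you say ``I expect this to reduce.'' The spine condition asks whether the reduced Hilbert class polynomial $\widetilde{\mathcal{H}_\mathcal{O}}(x)$ has a root in $\FF_p$ rather than merely $\FF_{p^2}$, equivalently whether $\ZZ[\sqrt{-p}]$ and $\mathcal{O}$ embed optimally into a common maximal order of $B_{p,\infty}$. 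The auxiliary order $\ZZ[\sqrt{-p}]$ has discriminant $-4p$, which grows with $p$, so your claim that the resulting biquadratic splitting conditions have ``bounded modulus'' is precisely what must be proved, not assumed: one needs to flip symbols of the form $\left(\frac{-p}{q}\right)$ for $q \mid \disc(\mathcal{O})$ into congruences on $p$ modulo $q$ and modulo $8$. The paper imports this wholesale from Chen and Xue (Theorem \ref{ChenXueRootCountThm}), which moreover pins down the exact number of $\FF_p$-roots as $0$ or $|\cl(\mathcal{O})[2]|$ via a regular action of the $2$-torsion of the class group; without that quantitative statement you cannot determine $n_s$, only decide whether it is nonzero.

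A second, related gap: even granting the local computation, an Eichler-style count of simultaneous optimal embeddings gives you the number of \emph{spine vertices} lying on $r$-cycles, not the number of $r$-cycles meeting the spine. To pass from one to the other you must know how many $\FF_p$-vertices each spine cycle contains. This is where the oddness of $r$ actually enters --- not, as you write, to guarantee primitivity of $\alpha$ (that is handled by M\"obius inversion and is irrelevant to parity). The paper's argument (Proposition \ref{FpvertexOrientationSymmetryOddLengthProp}) is that Frobenius fixes the vertex set of any $r$-cycle through the spine, hence pairs up the $\FF_{p^2}$-vertices, forcing an odd number of $\FF_p$-vertices on each such cycle; since all spine cycles from a given $\mathcal{O}$ carry the same number of $\FF_p$-vertices and the total is a power of $2$, that number is $1$. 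Without this step your formula for $n_s$ overcounts cycles passing through several spine vertices, and indeed for even $r$ the count changes (each cycle then meets the spine twice, Proposition \ref{EvenFpVertexDistProp}). You should either supply this argument or an equivalent normalization before the proof is complete.
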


The idea behind the proof of Theorem \ref{EventualDistributionThm} is to count the number of cycles along the spine in terms of imaginary quadratic class numbers. By Theorem \ref{CycleBijectionThm} (Theorem 3.2 in \cite{Arpin+22}), every cycle of length $r$ in $\mathcal{G}_\ell$ is obtained by forgetting orientations on a subset of the oriented supersingular curves, where the orientations are from a finite set of imaginary quadratic discriminants. We then use a result of Chen and Xue \cite{ChenXue22}, which says that the $2$-torsion subgroup of the class group of $\mathcal{O}$ acts freely and transitively on the primitively $\mathcal{O}$-oriented supersingular curves to find that, for large enough $p$, the number of cycles of length $r$ intersecting the spine depends only on $p$ modulo $M$. \\

As a corollary to Theorem \ref{EventualDistributionThm}, we show that for sufficiently large $p$, the $r$-cycles in $\mathcal{G}_\ell$ either disproportionately intersect the spine, or never intersect the spine:

\begin{corollary*}[Corollary \ref{EventualDistributionCor}]
Let $\ell$ and $r$ be fixed, with $r$ odd, and $M$ be the modulus obtained in the previous Theorem. Then for any $[m] \in \ZZ / M \ZZ$, one of the following holds for all sufficiently large $p \in [m]$:
\begin{enumerate}
\item $n_t/\#V(\mathcal{G}_{p,\ell}) < n_s/\#V(\mathcal{S}_{p,\ell})$;
\item $n_s = 0$,
\end{enumerate}
where $\#V(G)$ denotes the number of vertices of $G$.
\end{corollary*}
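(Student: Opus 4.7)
The plan is to combine Theorem \ref{EventualDistributionThm} with the known disparity between the asymptotic sizes of $\mathcal{G}_{p,\ell}$ and $\mathcal{S}_{p,\ell}$. By Theorem \ref{EventualDistributionThm}, for all sufficiently large $p$ within a fixed residue class $[m] \in \ZZ/M\ZZ$, the integers $n_t = n_t(p)$ and $n_s = n_s(p)$ depend only on $[m]$; denote their stabilized values by $n_t([m])$ and $n_s([m])$. Thus exactly one of the following holds for all sufficiently large $p \in [m]$: either $n_s([m]) = 0$, which puts us in case (2), or $n_s([m]) \geq 1$ is a fixed positive integer.

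First I would dispose of the positive case. Since every $r$-cycle contributing to $n_s$ also contributes to $n_t$, we have $n_t([m]) \geq n_s([m]) \geq 1$. The desired inequality is equivalent to
\[ n_t([m]) \cdot \#V(\mathcal{S}_{p,\ell}) \;<\; n_s([m]) \cdot \#V(\mathcal{G}_{p,\ell}), \]
in which the coefficients on both sides are now constants in $p$. On the one hand, $\#V(\mathcal{G}_{p,\ell}) = \lfloor (p-1)/12 \rfloor + \epsilon$ grows linearly in $p$. On the other hand, by Deuring's correspondence the vertices of the spine are parametrized by ideal classes of imaginary quadratic orders of discriminant dividing $-4p$, so $\#V(\mathcal{S}_{p,\ell})$ is bounded above by a sum of class numbers $h(-d)$ for $d \mid 4p$, giving $\#V(\mathcal{S}_{p,\ell}) = O(\sqrt{p}\,\log p)$ via the Brauer–Siegel theorem. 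Since the left-hand side of the displayed inequality grows at most like $\sqrt{p}\,\log p$ while the right-hand side grows linearly in $p$, the inequality holds for all $p \in [m]$ exceeding some threshold depending only on $[m]$.

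I do not expect a serious obstacle beyond this routine asymptotic comparison: the substantive content has already been captured by Theorem \ref{EventualDistributionThm}, and what remains is the simple observation that a bounded sequence divided by $\sqrt{p}\,\log p$ eventually dominates a bounded sequence divided by $p$. The only care required is to invoke a sufficiently sharp bound on the size of the spine, for which the standard class-number estimate is more than enough; in particular, the dichotomy between cases (1) and (2) reflects exactly the dichotomy between $n_s([m])$ being zero or positive in the eventually-periodic regime granted by Theorem \ref{EventualDistributionThm}.
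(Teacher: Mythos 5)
Your proposal is correct and follows essentially the same route as the paper: both invoke Theorem \ref{EventualDistributionThm} to make $n_s$ and $n_t$ eventually constant on the residue class, split on whether $n_s = 0$, and then win by comparing the spine size $O(\sqrt{p}\log p)$ (via the class-number bound, since the spine is counted by $h(\mathcal{O}_{-p})$ or $h(\mathcal{O}_{-4p})$) against the $O(p)$ size of the full graph. The paper's version additionally detours through counting spine vertices lying on $r$-cycles via Proposition \ref{FpvertexOrientationSymmetryOddLengthProp}, but the decisive asymptotic comparison is the one you give.
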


Our final result is a formula for the average value of $n_s$ when we vary $p$ and fix $\ell$ and $r$. Specifically, Theorem \ref{AverageNsThm} provides a formula for the average of the sequence $n_{s,p_i}(r)$, where $n_{s,p_i}(r)$ is the number of cycles of length $r$ in $\mathcal{G}_{p_i, \ell}$ that intersect the spine, and $(p_i)$ is a sequence of consecutive primes:

\begin{theorem*}[Theorem \ref{AverageNsThm}]
Let $(p_i)_{i=1}^\infty$ be an increasing sequence of consecutive primes. Then 
$$\lim_{n\to \infty} \frac{1}{n}\sum_{i=1}^n n_{s,p_i}(r) = \sum_{d \mid r} \mu(d) \#d(\mathcal{I}_{\ell, n \mid \frac{r}{d}}),$$
where $d(\mathcal{I}_{\ell, n \mid \frac{r}{d}})$ is defined in Proposition \ref{DiscriminantDescriptionProp}.
\end{theorem*}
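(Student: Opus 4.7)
The plan is to combine the cycle bijection (Theorem \ref{CycleBijectionThm}), the Chen--Xue result on the free and transitive action of $\mathrm{Cl}(\mathcal{O})[2]$ on primitively $\mathcal{O}$-oriented supersingular curves, and a Dirichlet/Chebotarev density argument that averages each discriminant contribution separately over consecutive primes. The Möbius factor $\mu(d)$ will appear as the standard inclusion--exclusion passing from closed $\ell$-walks of length $r$ to primitive $r$-cycles, correcting for walks that are $d$-fold repetitions of shorter primitive cycles.

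First, I would write $n_{s,p}(r) = \sum_{d \mid r} \mu(d)\, W_{s,p}(r/d)$, where $W_{s,p}(k)$ counts pairs $(E,\iota)$ consisting of a spine vertex $E$ together with a primitive $\mathcal{O}$-orientation $\iota$ whose discriminant lies in $d(\mathcal{I}_{\ell, n \mid k})$. By Theorem \ref{CycleBijectionThm} and Proposition \ref{DiscriminantDescriptionProp}, each such pair corresponds to a closed $\ell$-walk of length $k$ at $E$ that meets the spine, and Möbius inversion then isolates the spine-touching cycles of exact length $r$. I would then further decompose as $W_{s,p}(k) = \sum_{\Delta \in d(\mathcal{I}_{\ell, n \mid k})} N_{s,p}(\Delta)$, where $N_{s,p}(\Delta)$ counts the primitively $\mathcal{O}_\Delta$-oriented spine vertices over $\overline{\FF_p}$.

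For a fixed discriminant $\Delta$, the Chen--Xue theorem makes the set of primitively $\mathcal{O}_\Delta$-oriented supersingular curves, whenever nonempty, into a torsor under $\mathrm{Cl}(\mathcal{O}_\Delta)[2]$. A primitively oriented curve lies on the spine exactly when its $j$-invariant is Frobenius-fixed, which via the CM action translates to the vanishing of a distinguished Frobenius class $[\mathfrak{p}] \in \mathrm{Cl}(\mathcal{O}_\Delta)$ modulo $\mathrm{Cl}(\mathcal{O}_\Delta)[2]$. Hence $N_{s,p}(\Delta)$ equals $|\mathrm{Cl}(\mathcal{O}_\Delta)[2]|$ when $p$ satisfies a splitting condition inside the genus field of $\mathcal{O}_\Delta$, and $0$ otherwise. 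Genus theory identifies the admissible $p$ with a union of arithmetic progressions of natural density $1/|\mathrm{Cl}(\mathcal{O}_\Delta)[2]|$, and Dirichlet's theorem (equivalently the effective Chebotarev bound on the genus field) yields
\begin{equation*}
\lim_{n \to \infty} \frac{1}{n} \sum_{i=1}^n N_{s,p_i}(\Delta) = 1.
\end{equation*}
Since $d(\mathcal{I}_{\ell, n \mid k})$ is a finite set of discriminants, one may exchange the finite sum with the limit, obtaining $\lim_n \frac{1}{n}\sum_{i=1}^n W_{s,p_i}(k) = \#d(\mathcal{I}_{\ell, n \mid k})$, and the Möbius-inverted sum over $d \mid r$ produces the stated formula.

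The main obstacle I foresee is the third step: precisely identifying the Frobenius action on the Chen--Xue torsor and translating ``on the spine'' into a clean splitting condition whose density is exactly $1/|\mathrm{Cl}(\mathcal{O}_\Delta)[2]|$. If either the Frobenius class is misidentified or the density is off by any rational factor, then the $|\mathrm{Cl}(\mathcal{O}_\Delta)[2]|$ from the torsor count and the reciprocal from the density will not cancel, and the right-hand side will fail to simplify to $\#d(\mathcal{I}_{\ell,n\mid r/d})$. This bookkeeping, together with verifying that finitely many ``bad'' primes (ramification in the genus fields or exceptional orientations where Chen--Xue does not apply) contribute $o(n)$, is where I expect most of the technical work.
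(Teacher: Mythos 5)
Your plan is essentially the paper's proof. The paper first establishes the exact formula $n_{s,p}(r) = 2\sum_{d\mid r}\mu(d)\sum_{\Delta}\delta_p(\Delta)h_2(\Delta)$ for $p>M_{\ell,r}$ (Theorem \ref{ExplicitNsCountThm}), where $\delta_p(\Delta)$ is the indicator that $p$ is inert and $\widetilde{\mathcal{H}_{\mathcal{O}_\Delta}}$ has an $\FF_p$-root; it then identifies $\delta_p(\Delta)=1$ with the condition that $\left(\frac{F/\QQ}{p}\right)$ equals the unique $\sigma\in\Gal(F/\QQ)$ that is trivial on $F^+$ and is conjugation on $K$, for $F$ the genus field (Lemma \ref{ArtinSymbolLemma}, via Li--Li--Ouyang), applies Chebotarev to get density $1/[F:\QQ]=1/2^{\mu}$, and observes that $2h_2(\Delta)\cdot 2^{-\mu}=1$. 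Your decomposition by discriminant, translation to a genus-field splitting condition, and Chebotarev averaging is the same route, and the finitely many primes below $M_{\ell,r}$ are discarded exactly as you suggest.

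The one place your bookkeeping goes wrong --- and it is precisely the failure mode you flagged --- is the pair of constants in your third step, each of which is off by a factor of $2$, in compensating directions. The number of primitively $\mathcal{O}_\Delta$-oriented spine vertices, when nonzero, is $2\,|\cl(\mathcal{O}_\Delta)[2]| = 2^{\mu}$, not $|\cl(\mathcal{O}_\Delta)[2]|$: Chen--Xue gives $2^{\mu-1}$ $\FF_p$-roots of the class polynomial, and since $p$ is inert each such $j$-invariant carries two non-isomorphic conjugate orientations $(E,\iota)$ and $(E,\overline{\iota})$; this doubling is what matches the directed-cycle count, each $\FF_p$-vertex lying on exactly two directed $r$-cycles. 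Correspondingly, the unconditional density of admissible $p$ is $1/2^{\mu}$, not $1/|\cl(\mathcal{O}_\Delta)[2]| = 1/2^{\mu-1}$: the condition is a single Frobenius class in the full genus field $F=F^+K$ of degree $2^{\mu}$ (inert in $K$ \emph{and} split in $F^+$), whereas $1/2^{\mu-1}$ is only the density conditional on inertness. The two errors cancel, so your limit of $1$ per discriminant --- and hence the stated formula --- is correct, but as written neither intermediate constant is. Separately, your identification $W_{s,p}(k)=\sum_{d\mid k}n_{s,p}(d)$ silently uses that each spine cycle of odd length contains exactly one $\FF_p$-vertex (Proposition \ref{FpvertexOrientationSymmetryOddLengthProp}, applied to every divisor of $r$); without that input, counting oriented spine vertices would overcount spine cycles.
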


These results become particularly interesting in two contexts. First, interpreting cycles in $\mathcal{G}_{p,\ell}$ as cyclic endomorphisms, our results say that for large $p$, $\ell$-power endomorphisms of a fixed degree are disproportionately likely to occur for supersingular elliptic curves defined over $\FF_p$, as opposed to supersingular curves defined over $\FF_{p^2}$. The methods in this paper rely primarily on the structure of the supersingular isogeny graphs and the arithmetic of quaternion algebras. An independent proof from the viewpoint of arithmetic geometry would be of interest. \\

Second, we can compare these results for supersingular isogeny graphs with the behavior of random $(\ell + 1)$-regular graphs. A common heuristic is that, with the exception of the existence of an automorphism of order 2, the supersingular isogeny graphs ``behave like random $(\ell + 1)$-regular graphs.''  Our analysis provides another potential comparison between supersingular isogeny graphs and random graphs, by showing that odd length $r$-cycles in supersingular isogeny graphs do not distribute randomly throughout the graph. Instead, they are disproportionately likely to occur along the fixed vertices of the involution. It is therefore of interest to compare these results with the case of random graphs with an involution. Unfortunately, we were not able to find a construction of random graphs with an involution in the random-graph literature.

\subsection{Roadmap}

The paper is structured as follows: In Section \ref{SecPreliminaries} we provide preliminaries on supersingular elliptic curves and orientations. We also fix notation, and we recall results from the literature that will be used later on. In Section \ref{SecFpVerticesAndOrientations} we give results on the location of $\FF_p$-vertices in oriented supersingular isogeny graphs. Section \ref{SecLimitingDistribution} gives the proofs of Theorem \ref{EventualDistributionThm} and Corollary \ref{EventualDistributionCor}. We then give explicit examples of these results in Section \ref{SecExamples}. We address the expected number of cycles along the spine in Section \ref{SecExpectedValues}. Finally, we give partial results in the case of even length cycles in Section \ref{SecEvenCase}.

\subsection{Acknowledgments}

The author would like to thank Joseph Macula, James Rickards, and Katherine E. Stange for helpful conversations, and Katherine E. Stange for helpful comments on an earlier draft of this paper.
\section{Preliminaries}\label{SecPreliminaries}
 
Throughout, we will use $\mathcal{O}_D$ for the imaginary quadratic order of discriminant $D$, and simply $\mathcal{O}$ when the discriminant is not relevant or is clear from context. We will use $\cl(\mathcal{O})$ for the ideal class group of an order $\mathcal{O}$, and $h(\mathcal{O})$ for the class number of $\mathcal{O}$. The Hilbert class polynomial of an order $\mathcal{O}$ will be denoted by $\mathcal{H}_\mathcal{O}(x)$. It is well known that $\mathcal{H}_\mathcal{O}(x) \in \ZZ[x]$, and so for any prime $p$ we can consider the reduction modulo $p$. We denote this reduction by $\widetilde{\mathcal{H}_\mathcal{O}}(x)$ when there is no risk of confusion regarding the prime of reduction. \\

\subsection{Background on Supersingular Elliptic Curves}

In this section, we present some general background material on elliptic curves. Our primary reference on elliptic curves is \cite{SilvermanAEC1}. \\

An \emph{isogeny} $\phi : E_1 \to E_2$, between two elliptic curves $E_1, E_2$, is a rational map taking $0_{E_1}$ to $0_{E_2}$. The \emph{degree} of $\phi$ is the degree of the corresponding extension of function fields $[K(E_1) : \phi^*(K(E_2))]$. If the degree of $\phi$ is $\ell$ then we call $\phi$ an \emph{$\ell$-isogeny}.\\

An elliptic curve defined over $\overline{\FF_p}$ is said to be \emph{supersingular} if its endomorphism ring is non-commutative. By \cite[III.9.4]{SilvermanAEC1}, this implies that $\End(E) := \{\text{isogenies $E \to E$ defined over $\overline{\FF_p}$}\}$ is a maximal order in the unique (up to isomorphism) quaternion algebra over $\QQ$ ramified at only $p$ and $\infty$. We denote this quaternion algebra by $B_{p, \infty}$. \\

We are primarily interested in counting cycles in the $\ell$-isogeny graph, so we record here the definition of an \emph{isogeny cycle} from \cite{Arpin+22}. When we use the word ``cycle'' in reference to $\mathcal{G}_\ell$ we will always mean an isogeny cycle.

\begin{definition}\label{IsogenyCycleDef}
An \emph{isogeny cycle} is a closed walk, forgetting basepoint, in $\mathcal{G}_\ell$ containing no backtracking (no consecutive edges compose to multiplication-by-$\ell$) which is not a power of another closed walk (i.e. not equal to another closed walk repeated more than once).
\end{definition}

\begin{remark}
Note that in Definition \ref{IsogenyCycleDef}, we consider the two cycles obtained from a sequence $\{j_0, j_1, \hdots, j_{r-1}\}$ of $j$-invariants traversed in opposite directions as different cycles. We are therefore counting \emph{directed} cycles, which agrees with what was done in previous literature \cite{Arpin+22}.
\end{remark}

We will also use a refinement of supersingular elliptic curves, known as \emph{oriented supersingular curves}. For additional background on oriented supersingular curves and the oriented isogeny graph, see \cite{Arpin+22}. \\

Let $K$ be an imaginary quadratic field, $\mathcal{O}$ an order of $K$, and $E$ be a supersingular elliptic curve. Recall the following definitions from \cite{Arpin+22}:

\begin{definition}\label{Korientationdef}
A \emph{K-orientation} of $E$ is an embedding $\iota : K \to \End(E) \otimes \QQ$.
\end{definition}

\begin{definition}\label{KorientedCurvedef}
A \emph{K-oriented elliptic curve} is a pair $(E, \iota)$, where $E$ is an elliptic curve and $\iota$ is a $K$-orientation of $E$.
\end{definition}

\begin{definition}\label{Oorientationdef}
A $K$-orientation $\iota : K \to \End(E)$ is an \emph{$\mathcal{O}$-orientation} if $\iota(\mathcal{O}) \subset \End(E)$. We say that an $\mathcal{O}$-orientation is \emph{$\mathcal{O}$-primitive} if $\iota(\mathcal{O}) = \End(E) \cap \iota(K)$.
\end{definition}

\begin{definition}\label{EllFundamentalDef}
Let $\mathcal{O}$ be an imaginary quadratic order. Then $\mathcal{O}$ is \emph{$\ell$-fundamental} if $\ell$ does not divide the conductor of $\mathcal{O}$.
\end{definition}

Given an isogeny $\phi : E_1 \to E_2$, we define the $K$-orientation induced on $E_2$ from a $K$-orientation $\iota$ on $E_1$ as follows:
$$(\phi_*\iota)(\alpha) := \frac{1}{[\deg \phi]}\phi \circ \iota(\alpha) \circ \hat{\phi}.$$ A \emph{$K$-oriented isogeny} is an isogeny $\phi : (E_1, \iota_1) \to (E_2, \iota_2)$ such that $\phi_* \iota_1 = \iota_2$. Two $K$-oriented elliptic curves are \emph{$K$-isomorphic} if there exists an isomorphism $\eta : E_1 \to E_2$ such that $\eta_* \iota_1 = \iota_2$. \\

We can now introduce the oriented isogeny graph for a fixed prime $p$:

\begin{definition}\label{OrientedGraphDef}
We denote the \emph{$K$-oriented supersingular $\ell$-isogeny graph} by $\mathcal{G}_{K,\ell}$. This is the graph whose vertices are $K$-isomorphism classes of $K$-oriented supersingular elliptic curves over $\overline{\FF}_p$, and whose edges are $K$-oriented $\ell$-isogenies. We further denote by $\mathcal{G}_{\mathcal{O}, \ell}$ the subgraph of $\mathcal{G}_{K,\ell}$ induced by restricting vertices to the primitively $\mathcal{O}$-oriented supersingular curves.
\end{definition}

In Arpin, Chen, Lauter, Scheidler, Stange, and Tran \cite{Arpin+22}, the authors describe the structure of the oriented isogeny graph, as well as how cycles in $\mathcal{G}_\ell$ are obtained from cycles in $\mathcal{G}_{K,\ell}$ by forgetting orientations. In particular, we have the following results:

\begin{proposition}[\cite{Arpin+22}, Proposition 2.16]\label{OrientedVolcanoeProp} A connected component of $\mathcal{G}_{K, \ell}$, when identifying $\ell$-isogenies with their duals, has at most one cycle, passing through the the vertices that are primitively $\mathcal{O}$-oriented for some $\ell$-fundamental order $\mathcal{O}$. 
\end{proposition}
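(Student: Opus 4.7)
The plan is to establish a volcano-type structure for connected components of $\mathcal{G}_{K,\ell}$, analogous to Kohel volcanoes for ordinary elliptic curves, and then argue combinatorially that any cycle must live on the rim. To each primitively $\mathcal{O}$-oriented vertex $(E, \iota)$ I assign the \emph{level} $v_\ell(f)$, where $f$ is the conductor of $\mathcal{O}$; the $\ell$-fundamental vertices are exactly those of level $0$. The central step is to control how this level can change along a single $\ell$-isogeny.

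For a primitively $\mathcal{O}$-oriented vertex, the $\ell+1$ outgoing $\ell$-isogenies correspond to the order-$\ell$ subgroups of $E[\ell]$, and pushing $\iota$ forward across an isogeny $\phi$ yields a $K$-orientation on the target whose primitive order can be read off from how $\ker \phi$ interacts with the action of $\iota(\mathcal{O})$ on $E[\ell]$. A case analysis based on the factorization of $\ell$ in $\mathcal{O}_K$ and whether $\ell \mid f$ produces three types of edges: \emph{horizontal} edges preserving the primitive order, \emph{descending} edges sending the conductor to $\ell f$, and \emph{ascending} edges sending the conductor to $f/\ell$. The counts are: a unique ascending edge whenever $\ell \mid f$; exactly $1 + \left(\frac{D_K}{\ell}\right)$ horizontal edges when $\mathcal{O}$ is $\ell$-fundamental and none otherwise; with descending edges accounting for the remainder of the $\ell + 1$ total.

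Given this local dictionary, the global structure is straightforward. After identifying each $\ell$-isogeny with its dual, every edge is either horizontal or connects adjacent levels. Because the ascending edge from any non-$\ell$-fundamental vertex is unique, iterated ascension terminates in a well-defined $\ell$-fundamental vertex; equivalently the subgraph of non-horizontal edges above the rim within any component is a disjoint union of trees rooted at rim vertices. Trees contain no cycles, so any cycle of the component must consist entirely of horizontal edges and therefore lie on the $\ell$-fundamental vertices. On the rim, each vertex has horizontal degree at most $2$, with value $0$, $1$, or $2$ according to $\left(\frac{D_K}{\ell}\right)$, so the horizontal subgraph is a disjoint union of paths and cycles. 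Moreover, within a single component the rim is horizontally connected: any path joining two distinct rim vertices through the tree part would have to ascend from some tree vertex via its unique ascending edge to both rim vertices, forcing them to coincide. Therefore each component contains at most one cycle.

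The main obstacle is the local computation in the second paragraph: one has to verify that push-forward of orientations behaves as advertised on primitivity and count precisely how many of the $\ell + 1$ subgroups of $E[\ell]$ yield each type of edge. This comes down to identifying which order-$\ell$ subgroups arise as kernels of $\mathcal{O}$-ideals of norm $\ell$, which is governed by the factorization of $\ell$ in $\mathcal{O}$ and is cleanest to phrase inside the quaternion algebra $B_{p,\infty}$. Once that bookkeeping is in place, the volcano picture and the at-most-one-cycle conclusion are essentially combinatorial.
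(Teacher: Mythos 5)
This proposition is quoted from \cite{Arpin+22} and not proved in the present paper, so the comparison is with the cited source: your argument---levels given by the $\ell$-adic valuation of the conductor, the local dictionary of ascending/descending/horizontal edges (a unique ascending edge off the rim, $1+\left(\frac{D_K}{\ell}\right)$ horizontal edges on the rim, none elsewhere), hence cycles forced onto the horizontally $\le 2$-regular rim---is correct and is essentially the volcano-structure proof given there (the edge counts you flag as the ``main obstacle'' are exactly Proposition 2.15 of \cite{Arpin+22}, following Onuki and Col\`o--Kohel). No gap; the deepest-vertex/backtracking argument correctly rules out cycles dipping below the rim and shows the rim of a component is horizontally connected.
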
 

\begin{theorem}[\cite{Arpin+22}, Theorem 3.2]\label{CycleBijectionThm}
Let $r > 2$. There is a bijection between cycles of length $r$ in $\mathcal{G}_\ell$ and directed rims of size $r$ in $\coprod_{K} \mathcal{G}_{K, \ell}$, identified up to conjugation of the orientation. We use $\coprod_{K} \mathcal{G}_{K,\ell}$ to denote the disjoint union of the $K$-oriented isogeny graphs, where $K$ ranges over all imaginary quadratic fields.
\end{theorem}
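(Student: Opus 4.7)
The plan is to construct the bijection explicitly, using the endomorphism obtained by composing around a cycle to manufacture a canonical $K$-orientation on its starting curve.

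First, I would define the forward map. Fix a cycle $C$ of length $r$ in $\mathcal{G}_\ell$, pick a basepoint $E_0$ on it, and let $\phi_1, \ldots, \phi_r$ be consecutive representatives of the $\ell$-isogeny edges, giving $\theta := \phi_r \circ \cdots \circ \phi_1 \in \End(E_0)$ of degree $\ell^r$. Because $C$ is non-backtracking and is not a proper power of a shorter closed walk, $\theta$ is not a rational integer, so $K := \QQ(\theta)$ is an imaginary quadratic subfield of $B_{p,\infty}$, and the assignment $\theta \mapsto \theta$ extends to a $K$-orientation $\iota_0 : K \hookrightarrow \End(E_0) \otimes \QQ$. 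Propagating along $C$ via $\iota_i := (\phi_i)_* \iota_{i-1}$ yields $K$-orientations on each intermediate vertex, with each $\phi_i$ automatically $K$-oriented; the identity $\iota_r = \iota_0$ falls out because $\theta \in \iota_0(K)$ commutes with $\iota_0(K)$ in $\End(E_0) \otimes \QQ$. The sequence $(E_i, \iota_i)$ is thus a closed non-backtracking $\ell$-isogeny loop in $\mathcal{G}_{K,\ell}$, and by Proposition~\ref{OrientedVolcanoeProp} it must coincide with the rim of its connected component, since each component admits at most one such directed cycle.

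For the reverse map, I would simply forget orientations on a directed rim of size $r$ in $\coprod_K \mathcal{G}_{K,\ell}$: non-backtracking and the non-power condition transfer from the rim structure, yielding a cycle in the sense of Definition~\ref{IsogenyCycleDef}. The two constructions are mutually inverse up to conjugation of orientation: the only freedom in the forward map is to precompose $\iota_0$ with complex conjugation on $K$, which replaces the orienting element $\alpha \in K$ with $\iota_0(\bar{\alpha})$ and propagates to a conjugate orientation on the same underlying directed rim. Starting instead from a rim and composing its isogenies recovers a generator of $K$ and reproduces $\iota_0$ up to this ambiguity, closing the loop.

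The main obstacle I foresee is well-definedness across the equivalence relation on edges: isogenies in $\mathcal{G}_\ell$ are identified up to post-composition by automorphisms, and an analogous identification holds in $\mathcal{G}_{K,\ell}$, so I would need to verify that altering each $\phi_i$ by an automorphism of $E_i$ (and compensating on its neighbor) changes neither $\theta$ nor the propagated $\iota_i$'s, and dually that different representatives of a rim produce the same cycle in $\mathcal{G}_\ell$. A secondary subtlety is checking that the no-backtracking condition in $\mathcal{G}_\ell$ translates correctly into no-backtracking in $\mathcal{G}_{K,\ell}$; this amounts to showing that if $\phi_{i+1}$ is the oriented dual of $\phi_i$ in $\mathcal{G}_{K,\ell}$, then their composition is multiplication by $\ell$ in $\End(E_i)$, which is exactly the backtracking condition ruled out in Definition~\ref{IsogenyCycleDef}. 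With these compatibilities verified, everything else in the bijection is formal from the propagation rule.
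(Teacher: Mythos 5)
This statement is quoted verbatim from \cite{Arpin+22} (Theorem 3.2) and the present paper supplies no proof of its own, so there is nothing internal to compare against; your construction --- orienting the basepoint by the composite endomorphism $\theta$, propagating by pushforward so that $\iota_r=\theta_*\iota_0=\iota_0$, identifying the resulting non-backtracking closed walk with a rim via Proposition \ref{OrientedVolcanoeProp}, and inverting by forgetting orientations up to conjugation --- is essentially the argument given in the cited reference. The subtleties you flag yourself are exactly where the real work lies: the well-definedness of $\theta$ and of the propagated orientations under post-composition by automorphisms (nontrivial only at $j=0,1728$, which is where the hypothesis $r>2$ and the ``safe arbitrary assignment'' device of Remark \ref{SafeArbitraryAssignmentRmk} are actually needed), and the verification that the non-backtracking and non-power conditions transfer in both directions so that the closed walk in $\mathcal{G}_{K,\ell}$ traverses the rim exactly once.
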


We also recall a tool used in proving the theorems cited above: the class group action on oriented curves. To define the action, we begin with the following definition from \cite{Arpin+22}:

\begin{definition}\label{OrientedActionDef}
Let $(E, \iota)$ be a primitively $\mathcal{O}$-oriented supersingular elliptic curve. Let $\mathfrak{a}$ be an integral ideal of $\mathcal{O}$ coprime to $p$. Define the intersection:

$$E[\iota(\mathfrak{a})] := \bigcap_{\alpha \in \mathfrak{a}} \ker((\iota(\alpha)).$$

This group defines an isogeny $\phi_\mathfrak{a}^{(E, \iota)} : E \to E / E[\iota(\mathfrak{a})]$. The action of $\mathfrak{a}$ on $(E, \iota)$ is defined as $\mathfrak{a} * (E, \iota) := (\phi_\mathfrak{a}^{(E, \iota)}(E), (\phi_\mathfrak{a}^{(E, \iota)})_*\iota)$. \\

We also define an action of the two-element group $\{1, \pi_p\}$ generated by the Frobenius automorphism $\pi_p$ of $\FF_{p^2}$ by 
$$\pi_p * (E, \iota) := (E^{(p)}, \iota^{(p)}),\ \pi_p * \varphi = \varphi^{(p)}$$
where $\iota^{(p)} := (\pi_p)_*(\iota)$.
\end{definition}

We can now give the primary result concerning the class group action on oriented curves:

\begin{proposition}[\cite{Arpin+22}, Proposition 2.26]\label{OrientedActionProp}
The actions of Definition \ref{OrientedActionDef} and Frobenius commute and hence give an action of $\cl(\mathcal{O}) \times \langle \pi_p \rangle$ on the primitively $\mathcal{O}$-oriented supersingular elliptic curves over $\overline{\FF_p}$. This action is transitive and its point stabilizers are either all trivial or all $\langle \pi_p \rangle$.
\end{proposition}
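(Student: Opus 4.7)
The plan is to verify the proposition in three steps: first the commutation of the two actions, then transitivity of the combined action, and finally the uniformity of the stabilizer.

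For commutation, I would work directly from Definition \ref{OrientedActionDef}. The key identity is $\pi_p \circ \iota(\alpha) = \iota^{(p)}(\alpha) \circ \pi_p$ for every $\alpha \in \mathcal{O}$, which comes from unwinding $\iota^{(p)} = (\pi_p)_*\iota$ together with the fact that $\hat{\pi}_p \circ \pi_p = [p]$. From this one deduces $E^{(p)}[\iota^{(p)}(\mathfrak{a})] = \pi_p(E[\iota(\mathfrak{a})])$ for every $\mathcal{O}$-ideal $\mathfrak{a}$ coprime to $p$, so the two oriented curves $\pi_p * (\mathfrak{a} * (E, \iota))$ and $\mathfrak{a} * (\pi_p * (E, \iota))$ are canonically identified. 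This yields a well-defined action of the abelian group $\cl(\mathcal{O}) \times \langle \pi_p \rangle$.

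For transitivity, the plan is to lift to characteristic zero via Deuring's theorem. Any primitively $\mathcal{O}$-oriented supersingular curve $(E, \iota)$ arises as the reduction modulo a prime $\mathfrak{P} \mid p$ in the ring class field of $\mathcal{O}$ of some CM elliptic curve $\tilde{E}$ with $\End(\tilde{E}) \cong \mathcal{O}$, the orientation $\iota$ being induced from this identification. Since $p$ cannot split in $\mathcal{O}$ (otherwise the reduction would be ordinary), classical CM theory gives a simply transitive action of $\cl(\mathcal{O})$ on the set of such CM lifts, and this action commutes with reduction. Two primitively $\mathcal{O}$-oriented supersingular curves therefore lie in the same $\cl(\mathcal{O})$-orbit once we fix a prime $\mathfrak{P} \mid p$, and passing between the two possible primes above $p$ is realized by the action of $\pi_p$. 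Combining, we obtain transitivity under $\cl(\mathcal{O}) \times \langle \pi_p \rangle$.

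The stabilizer claim then follows formally. Since $\cl(\mathcal{O}) \times \langle \pi_p \rangle$ is abelian, transitivity forces all point stabilizers to coincide as subgroups. It therefore suffices to show that the stabilizer of any given vertex is either trivial or equal to $\langle \pi_p \rangle$. Suppose $\mathfrak{a} * (E, \iota)$ is $K$-isomorphic to $(E, \iota)$ via some $\eta$. Then $\eta \circ \phi_{\mathfrak{a}}^{(E, \iota)}$ is an endomorphism of $E$ of norm $N(\mathfrak{a})$ that commutes with the orientation, so it lies in $\iota(K) \cap \End(E) = \iota(\mathcal{O})$ by primitivity, realizing $\mathfrak{a}$ as a principal ideal. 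Hence the $\cl(\mathcal{O})$ factor acts freely, and the stabilizer injects into $\langle \pi_p \rangle$, leaving only the two possibilities stated. The main obstacle is the transitivity step: one must be careful that primitivity of $\iota$ corresponds to the lift having $\End(\tilde{E})$ exactly $\mathcal{O}$, and that the two primes above $p$ in the ring class field really do produce Frobenius conjugates at the level of supersingular reductions.
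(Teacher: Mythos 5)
This proposition is quoted verbatim from \cite{Arpin+22} (Proposition 2.26) and the paper supplies no proof of its own, so your attempt can only be measured against the standard argument in the literature (\cite{Arpin+22}, building on \cite{Onuki20}). Your outline follows that route: the commutation step via $\iota^{(p)}(\alpha)\circ\pi_p=\pi_p\circ\iota(\alpha)$ and $E^{(p)}[\iota^{(p)}(\mathfrak{a})]=\pi_p(E[\iota(\mathfrak{a})])$ is correct (the kernels are \'etale since $\mathfrak{a}$ is coprime to $p$), and your freeness argument for the $\cl(\mathcal{O})$-factor --- a $K$-oriented endomorphism commutes with $\iota(K)$, hence lies in $\iota(K)\cap\End(E)=\iota(\mathcal{O})$ by primitivity --- is the right one. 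The transitivity step does lean on the nontrivial input that \emph{every} primitively $\mathcal{O}$-oriented supersingular curve arises by reduction of a CM lift with $\End(\tilde E)\cong\mathcal{O}$ at one of the primes above $p$ (Deuring lifting plus Lemma \ref{OnukiLemma}); you assert this rather than prove it, but you correctly flag it as the main obstacle, and it is a citable classical fact.

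The genuine gap is in the last step. Knowing that the stabilizer $H\le\cl(\mathcal{O})\times\langle\pi_p\rangle$ meets $\cl(\mathcal{O})\times\{1\}$ trivially only tells you that $H$ has order $1$ or $2$ and, if nontrivial, is generated by some $(\mathfrak{a},\pi_p)$ with $[\mathfrak{a}]\in\cl(\mathcal{O})[2]$. The proposition asserts more: that in the nontrivial case $[\mathfrak{a}]$ is the identity, i.e.\ $H=\langle\pi_p\rangle$ exactly, so that $\pi_p$ fixes every oriented curve. Your argument does not exclude the ``mixed'' stabilizer $\{1,(\mathfrak{a},\pi_p)\}$ with $[\mathfrak{a}]$ a nontrivial $2$-torsion class, which would mean $\pi_p$ acts as the nontrivial class $[\mathfrak{a}]^{-1}$ on every vertex --- a scenario consistent with everything you have shown. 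Ruling it out requires the finer description of how Frobenius interacts with the two reduction primes and with conjugation of the orientation (in the notation of \cite{Onuki20}: $\pi_p$ exchanges $\Ell_\mathcal{O}(p)$ with its conjugate set, these are either disjoint, giving trivial stabilizers, or equal, which happens exactly when $(E,\iota)\cong(E^{(p)},\iota^{(p)})$ for all vertices, giving stabilizer $\langle\pi_p\rangle$). You need to add that case analysis to close the argument.
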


\subsection{Background on Quadratic Orders in $B_{p, \infty}$}

In this section, we recall several results from the literature about quadratic orders in the quaternion algebra $B_{p,\infty}$. \\

First, we have the following result of Kaneko, which says roughly that a maximal order in $B_{p,\infty}$ can only contain two quadratic discriminants $D_1, D_2$ if $D_1D_2$ is sufficiently large:

\begin{theorem}[\cite{Kaneko89}, Theorem 2]\label{KanekoPolynomialThm} Let $D_1$ and $D_2$ be two imaginary quadratic discriminants, and $p$ be a prime such that the reductions of elliptic curves with $CM$ by $D_1$ and $D_2$ are supersingular. If $D_1D_2 < 4p$, then $\widetilde{\mathcal{H}_{\mathcal{O}_{D_1}}}(x)$ and $\widetilde{\mathcal{H}_{\mathcal{O}_{D_2}}}(x)$ have no common roots in $\overline{\mathbb{F}_p}$. \end{theorem}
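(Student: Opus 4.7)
The plan is to argue by contradiction via quaternion arithmetic. Suppose $j \in \overline{\FF_p}$ is a common root. By Deuring's lifting theorem and CM reduction, $j$ is the $j$-invariant of a supersingular elliptic curve $E / \overline{\FF_p}$ whose endomorphism ring $\mathcal{O} := \End(E)$---a maximal order in $B_{p,\infty}$---admits simultaneous optimal embeddings $\iota_i \colon \mathcal{O}_{D_i} \hookrightarrow \mathcal{O}$ for $i = 1, 2$. The aim is to extract an element of $\mathcal{O}$ whose reduced norm is constrained incompatibly with the hypothesis $D_1 D_2 < 4p$.

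Let $\omega_i := \iota_i(\tau_i) \in \mathcal{O}$, where $\tau_i$ is a standard generator of $\mathcal{O}_{D_i}$ satisfying $\omega_i^2 - t_i \omega_i + n_i = 0$ with $D_i = t_i^2 - 4 n_i$, and form the commutator $\gamma := \omega_1 \omega_2 - \omega_2 \omega_1 \in \mathcal{O}$. A direct calculation using $\alpha_i := 2\omega_i - t_i$ (trace zero with $\alpha_i^2 = D_i$) shows that $\operatorname{Trd}(\gamma) = 0$ and $16\, N(\gamma) = 4 D_1 D_2 - T^2$, where $T := \operatorname{Trd}(\alpha_1 \alpha_2) \in 2\ZZ$. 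The element $\gamma$ is nonzero: were $\omega_1$ and $\omega_2$ to commute, they would generate a commutative subring of $B_{p,\infty}$, forcing $\iota_1(K_1) = \iota_2(K_2)$, and then optimality would give $\iota_1(\mathcal{O}_{D_1}) = \iota_2(\mathcal{O}_{D_2})$, contradicting $D_1 \neq D_2$.

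Now consider the rank-$4$ sublattice $L := \ZZ\langle 1, \omega_1, \omega_2, \gamma \rangle \subset \mathcal{O}$. Cyclic invariance of the reduced trace shows that $\gamma$ is orthogonal to the other three basis vectors under the bilinear form $(x, y) \mapsto \operatorname{Trd}(x \bar y)$, and a short Gram-matrix computation yields $\disc(L) = (4 D_1 D_2 - T^2)^2 / 64$. Since $\disc(\mathcal{O}) = p^2$ (as $\mathcal{O}$ is a maximal order in $B_{p,\infty}$), extracting square roots gives $(4 D_1 D_2 - T^2)/8 = p \cdot [\mathcal{O} : L]$. The main obstacle---and the key step---is to refine the naive bound $[\mathcal{O} : L] \geq 1$ to $[\mathcal{O} : L] \geq 2$, since the former only yields $D_1 D_2 \geq 2p$. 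To this end, note $\omega_1 \omega_2 \in \mathcal{O}$ but $\omega_1 \omega_2 \notin L$: the identity $2\omega_1 \omega_2 = (\omega_1 \omega_2 + \omega_2 \omega_1) + \gamma$, combined with $\omega_1 \omega_2 + \omega_2 \omega_1 \in \ZZ\langle 1, \omega_1, \omega_2 \rangle$, shows that $\omega_1 \omega_2$ expanded in the $L$-basis has a half-integer coefficient on $\gamma$. Hence $[\mathcal{O} : L] \geq 2$, forcing $4 D_1 D_2 - T^2 \geq 16 p$ and so $D_1 D_2 \geq 4p + T^2/4 \geq 4p$, contradicting the hypothesis.
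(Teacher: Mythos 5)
Your argument is correct, but note that the paper does not prove this statement at all: it is imported verbatim as Theorem 2 of \cite{Kaneko89}, so there is no internal proof to compare against. What you have written is essentially Kaneko's original lattice-discriminant argument, and all the computations check out: with $\alpha_i = 2\omega_i - t_i$ one indeed gets $16N(\gamma) = 4D_1D_2 - T^2$ with $T = \operatorname{Trd}(\alpha_1\alpha_2) = 4\operatorname{Trd}(\omega_1\omega_2) - 2t_1t_2 \in 2\ZZ$, the Gram matrix of $L$ is block diagonal with $3\times 3$ block of determinant $(4D_1D_2 - T^2)/8$, and $\omega_1\omega_2 = \tfrac12(\omega_1\omega_2+\omega_2\omega_1) + \tfrac12\gamma$ does force $[\mathcal{O}:L]\geq 2$. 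A small streamlining worth knowing: if you work directly with $L' = \ZZ\langle 1,\omega_1,\omega_2,\omega_1\omega_2\rangle \supset L$ (which contains $L$ with index $2$), then $\disc(L') = N(\gamma)^2$ and $N(\gamma) = p\,[\mathcal{O}:L'] \geq p$ gives $4D_1D_2 - T^2 \geq 16p$ in one step, with no need for the index refinement. Two points you should make explicit rather than leave implicit: (i) the statement tacitly assumes $D_1 \neq D_2$ (otherwise it is false), and your nonvanishing argument for $\gamma$ uses exactly this; (ii) passing from a common root of the reduced class polynomials to \emph{optimal} embeddings of $\mathcal{O}_{D_1}$ and $\mathcal{O}_{D_2}$ requires $p$ not to divide either conductor (cf.\ Lemma \ref{OnukiLemma}), which does follow from the hypothesis since $p \mid \mathrm{cond}(\mathcal{O}_{D_i})$ would force $|D_i| \geq 3p^2$, contradicting $D_1D_2 < 4p$. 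With those caveats recorded, the proof is complete.
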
 

In another format this gives the following:

\begin{theorem}[\cite{Kaneko89}, Theorem 2']\label{KanekoOrderThm}
Suppose that two quadratic orders $\mathcal{O}_{D_1}$ and $\mathcal{O}_{D_2}$ are optimally embedded in a maximal order of $B_{p, \infty}$ with different images. Then the inequality $D_1D_2 \geq 4p$ holds. If $\mathbb{Q}(\sqrt{D_1}) = \mathbb{Q}(\sqrt{D_2})$, this inequality can be replaced by $D_1D_2 \geq p^2$.
\end{theorem}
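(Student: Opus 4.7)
My strategy is to use Deuring's correspondence to translate the hypothesis on optimal embeddings into a statement about common roots of Hilbert class polynomials, and then appeal to Theorem \ref{KanekoPolynomialThm}. I identify the given maximal order $\mathcal{O}$ with $\End(E)$ for some supersingular $E/\overline{\FF_p}$. An optimal embedding $\iota_i : \mathcal{O}_{D_i} \hookrightarrow \End(E)$ lifts, by Deuring's lifting theorem, to an elliptic curve $\widetilde{E}_i$ in characteristic zero with $\End(\widetilde{E}_i) \cong \mathcal{O}_{D_i}$ whose reduction modulo some prime above $p$ is isomorphic to $E$. Consequently, the reduced Hilbert class polynomials $\widetilde{\mathcal{H}_{\mathcal{O}_{D_1}}}$ and $\widetilde{\mathcal{H}_{\mathcal{O}_{D_2}}}$ share the root $j(E)$ in $\overline{\FF_p}$. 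When $\QQ(\sqrt{D_1}) \neq \QQ(\sqrt{D_2})$ (so in particular $D_1 \neq D_2$), Theorem \ref{KanekoPolynomialThm} then immediately gives $D_1 D_2 \geq 4p$.

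For the sharper bound when $\QQ(\sqrt{D_1}) = \QQ(\sqrt{D_2}) =: K$, the images $\iota_1(K), \iota_2(K) \subset B_{p,\infty}$ must be distinct subfields: if they coincided, then by optimality both $\iota_i(\mathcal{O}_{D_i})$ would equal $\mathcal{O} \cap \iota_1(K)$, giving the same image and contradicting the hypothesis. Skolem-Noether then realizes $\iota_2(K) = g \iota_1(K) g^{-1}$ for some $g \in B_{p,\infty}^\times$. I would compute the discriminant of the sublattice $\ZZ\langle 1, \alpha_1, \alpha_2, \alpha_1 \alpha_2\rangle \subset \mathcal{O}$, where $\alpha_i$ generates $\iota_i(\mathcal{O}_{D_i})$, and compare it with $\disc(\mathcal{O}) = p^2$. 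The additional fact that $\alpha_1$ and $g^{-1} \alpha_2 g$ both lie in $\iota_1(K)$ imposes a linear relation not present in the generic case; combined with a Cauchy-Schwarz-type bound on the reduced-trace pairing between $\alpha_1$ and $\alpha_2$, this extra rigidity should yield the refined bound $D_1 D_2 \geq p^2$. Equivalently, one could prove a same-field sharpening of Theorem \ref{KanekoPolynomialThm} and then conclude as in the main case.

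The main obstacle is the refined same-field bound. The general inequality follows mechanically from Deuring together with Theorem \ref{KanekoPolynomialThm}, but the factor-$p/4$ improvement in the same-field case hinges on delicate information about how two embedded copies of $K$ can coexist in a maximal order of $B_{p,\infty}$. I expect the cleanest path to be a local analysis at $p$: since $B_{p,\infty} \otimes \QQ_p$ is a division algebra, the valuation of the conjugating element $g$ is controlled by the local ramification at $p$, and this valuation should translate directly into the extra factor of $p$ separating $4p$ from $p^2$ in the two bounds.
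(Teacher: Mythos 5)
This statement is quoted verbatim from Kaneko (his Theorem~2$'$); the paper offers no proof of it, only the remark that it is Theorem \ref{KanekoPolynomialThm} ``in another format.'' So the relevant question is whether your argument stands on its own. The first half does: lifting each optimal embedding via Deuring to a characteristic-zero curve with CM by exactly $\mathcal{O}_{D_i}$ (optimality is what guarantees the lift has endomorphism ring $\mathcal{O}_{D_i}$ and not a larger order), and reading off a common root $j(E)$ of the two reduced class polynomials, correctly yields $D_1D_2 \geq 4p$ from the contrapositive of Theorem \ref{KanekoPolynomialThm} whenever $D_1 \neq D_2$ --- in particular whenever the fields differ. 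Note this runs Kaneko's logic backwards (he proves the order-theoretic statement by a lattice computation and deduces the polynomial statement via Deuring), which is fine given that the paper takes the polynomial version as input.

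The genuine gap is the same-field bound $D_1D_2 \geq p^2$, and your proposal does not close it. Theorem \ref{KanekoPolynomialThm} cannot supply it: the polynomial statement only ever gives the threshold $4p$, and when $D_1 = D_2$ with two non-conjugate optimal embeddings it gives nothing at all (a polynomial trivially shares roots with itself). Your sketch for this case --- computing $\operatorname{disc}\bigl(\ZZ\langle 1,\alpha_1,\alpha_2,\alpha_1\alpha_2\rangle\bigr)$ against $\operatorname{disc}(\mathcal{O}) = p^2$ --- is indeed the right framework (it is essentially Kaneko's), but as written it only yields $\bigl(\tfrac{D_1D_2 - s^2}{4}\bigr)^2 \geq p^2$ for the relevant trace invariant $s$, i.e.\ $D_1D_2 \geq 4p + s^2$, which is the weak bound again. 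The entire content of the refinement is showing that when $\QQ(\sqrt{D_1}) = \QQ(\sqrt{D_2})$ the integer $\tfrac{D_1D_2 - s^2}{4}$ is forced to be divisible by a full extra factor of $p$ (equivalently, that the index $[\mathcal{O} : \ZZ\langle\alpha_1,\alpha_2\rangle]$ cannot absorb the discrepancy). Your appeals to ``a Cauchy--Schwarz-type bound'' and to the $p$-adic valuation of the conjugating element $g$ name plausible ingredients but do not carry out this divisibility argument, and you acknowledge as much. Since the $p^2$ case is precisely the part of the theorem that does not follow formally from Theorem \ref{KanekoPolynomialThm}, the proof is incomplete at its essential point.
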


Chen and Xue note the following corollary, which will also be of use to us:

\begin{corollary}[\cite{ChenXue22}, Corollary 2.7]\label{ChenXueInjectivityCor} Let $\mathcal{O}$ be an imaginary quadratic order, $p$ be a prime that does not split in $\mathcal{O}$, and $\mathfrak{P}$ be a prime of the Hilbert class field of $\mathcal{O}$ lying over $p$. If $p > |\operatorname{disc}(\mathcal{O})|$, then the reduction map $r_\mathfrak{P}$ from isomorphism classes of elliptic curves over $\overline{\QQ}$ with CM by $\mathcal{O}$ to the set of isomorphism classes of supersingular elliptic curves over $\overline{\FF_p}$ is injective.
\end{corollary}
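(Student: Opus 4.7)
The plan is to reduce injectivity to Kaneko's Theorem \ref{KanekoOrderThm} by producing two optimal embeddings of $\mathcal{O}$ into a shared maximal order whenever the reduction map fails to be injective. Suppose $E_1, E_2$ are elliptic curves over $\overline{\QQ}$ with CM by $\mathcal{O}$ whose reductions modulo $\mathfrak{P}$ are isomorphic, and write $\tilde{E}$ for the common reduction. Since $p$ does not split in $\mathcal{O}$, Deuring's theorem guarantees that $\tilde{E}$ is supersingular and that each CM embedding $\iota_i : \mathcal{O} \hookrightarrow \End(E_i)$ reduces to an optimal embedding $\tilde\iota_i : \mathcal{O} \hookrightarrow \End(\tilde{E})$ inside the maximal order $\End(\tilde{E}) \subset B_{p,\infty}$.

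Next I would apply Theorem \ref{KanekoOrderThm} with $D_1 = D_2 = D := \disc(\mathcal{O})$. Since $\tilde\iota_1$ and $\tilde\iota_2$ both generate the same imaginary quadratic field $K = \QQ(\sqrt{D})$ inside $B_{p,\infty}$, the strengthened ``same field'' inequality applies: if the two embeddings had different images, then $D^2 \geq p^2$, forcing $p \leq |D|$ and contradicting the hypothesis $p > |\disc(\mathcal{O})|$. Hence the two optimal embeddings share a common image in $\End(\tilde{E})$.

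Finally, I would argue that coincidence of images forces $E_1 \cong E_2$ over $\overline{\QQ}$. Two optimal embeddings of $\mathcal{O}$ with the same image differ by an element of $\Aut(\mathcal{O}) = \{\operatorname{id}, c\}$, where $c$ denotes complex conjugation. By Deuring's lifting theorem, the reduction map is a bijection on isomorphism classes of primitively $\mathcal{O}$-oriented CM elliptic curves over $\overline{\QQ}$ and is compatible with the simply transitive $\cl(\mathcal{O})$-action, while replacing an orientation by its conjugate preserves the underlying curve. In either case ($\tilde\iota_1 = \tilde\iota_2$ or $\tilde\iota_1 = c \circ \tilde\iota_2$), the underlying $j$-invariants of $E_1$ and $E_2$ must coincide.

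The main obstacle is the final bookkeeping: one must verify carefully that conjugating an orientation in characteristic zero leaves the underlying $j$-invariant unchanged, so that the ``conjugate'' case does not produce distinct CM lifts. Once that is granted, the proof is essentially a direct invocation of Kaneko's inequality in its ``same field'' form, which supplies the critical quadratic bound $D^2 \geq p^2$; this is precisely the strengthening that makes the hypothesis $p > |\disc(\mathcal{O})|$ sufficient, rather than requiring $p$ to exceed something like $|\disc(\mathcal{O})|^2/4$.
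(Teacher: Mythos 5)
The paper itself gives no proof of this statement---it is imported verbatim from Chen--Xue---so I am judging your argument on its own terms. Your first two steps are sound and are indeed the engine of the deduction. Since $p > |\disc(\mathcal{O})|$ and the conductor $c$ satisfies $c^2 \leq |\disc(\mathcal{O})|$, we get $p \nmid c$, so Lemma \ref{OnukiLemma} does make both reduced embeddings optimal; and the same-field case of Theorem \ref{KanekoOrderThm} with $D_1 = D_2 = D$ correctly yields $D^2 \geq p^2$ if the images differ, which $p > |D|$ excludes. So the two optimal embeddings $\tilde\iota_1, \tilde\iota_2$ have a common image and hence $\tilde\iota_2 \in \{\tilde\iota_1, \overline{\tilde\iota_1}\}$.

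The gap is in the final step, and it is not quite the one you flag: that conjugating an orientation in characteristic zero preserves the $j$-invariant is trivially true. What you actually need to rule out is that two \emph{non-isomorphic} curves with CM by $\mathcal{O}$ reduce to the same $\tilde{E}$ carrying \emph{conjugate} orientations. Equivariance under the simply transitive $\cl(\mathcal{O})$-action cannot do this on its own: if $\mathfrak{a} * E_1 \cong E_2$, equivariance only gives $(\tilde{E}, \overline{\tilde\iota_1}) \cong \mathfrak{a} * (\tilde{E}, \tilde\iota_1)$, which is perfectly consistent with a free action when $\mathfrak{a} \neq 1$, since $(\tilde{E}, \tilde\iota_1)$ and $(\tilde{E}, \overline{\tilde\iota_1})$ are genuinely non-isomorphic oriented curves for $p$ inert (the fact invoked in the proof of Proposition \ref{FpvertexOrientationSymmetryOddLengthProp}). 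What closes the argument is the uniqueness clause of Deuring's lifting theorem in the precise form used by Gross--Zagier and Chen--Xue: the fiber of $r_\mathfrak{P}$ over $\tilde{E}$ injects into the set of optimal embeddings $\mathcal{O} \hookrightarrow \End(\tilde{E})$ taken modulo conjugation by $\End(\tilde{E})^\times$ \emph{and} modulo $f \mapsto \bar{f}$. With that statement, ``same image'' forces $\{\tilde\iota_2, \overline{\tilde\iota_2}\} = \{\tilde\iota_1, \overline{\tilde\iota_1}\}$ and hence $E_1 \cong E_2$, disposing of both of your cases at once. Without it, the conjugate case is essentially as hard as the corollary itself, so you should state and cite (or prove) that uniqueness explicitly rather than the vaguer ``bijection on oriented CM curves.''
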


Finally, we will use the following result of Onuki, which determines when the reduction map from a CM elliptic curve $E$ to a supersingular elliptic curve $\tilde{E}$ gives an optimal embedding of $\End(E)$ into $\End(\tilde{E})$:

\begin{lemma}[\cite{Onuki20}, Lemma 3.1]\label{OnukiLemma} Let $E$ be an elliptic curve over a number field $L$ containing $K$ with $\End(E) \cong \mathcal{O} \subset K$, and $\mathfrak{p}$ a prime ideal of $L$ above $p$ such that $E$ has good reduction at $\mathfrak{p}$. Then the reduced curve $\tilde{E}$ modulo $\mathfrak{p}$ is supersingular if and only if $p$ does not split in $K$. Furthermore, let $c$ be the conductor of $\mathcal{O}$ and write $c = p^r c_0$, where $p \nmid c_0$. Then $$\End(\tilde{E}) \cap [K]_{\tilde{E}} = [\ZZ + c_0 \mathcal{O}_K]_{\tilde{E}},$$ where $[ \cdot ]_{\tilde{E}}$ reduction map between endomorphism algebras.
\end{lemma}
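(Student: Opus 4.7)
The lemma asserts two distinct facts: first, a criterion for when reduction of a CM elliptic curve is supersingular; and second, a precise identification of the image-intersection order $\tilde{\mathcal{O}} := \End(\tilde{E}) \cap [K]_{\tilde{E}}$ in terms of the prime-to-$p$ part of the conductor. The plan is to prove these in turn, invoking classical Deuring reduction theory for the first and a prime-by-prime local analysis for the second.

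For the supersingular criterion, the plan is to exploit the fact that reduction gives an injective ring homomorphism $\End(E) \hookrightarrow \End(\tilde{E})$. If $p$ splits in $K$ as $\mathfrak{p}\bar{\mathfrak{p}}$, then after (if necessary) passing to an isogenous curve with CM by the maximal order $\mathcal{O}_K$ via an isogeny of degree a power of $c$, the class-group action of $\mathfrak{p}$ provides an isogeny whose reduction realizes Frobenius on $\tilde{E}$, forcing $\tilde{E}$ to be ordinary. Conversely, if $\tilde{E}$ is ordinary, Deuring lifting applies to Frobenius $\pi \in \End(\tilde{E})$, producing $\pi \in \End(E) \otimes \QQ = K$ with $\pi\bar{\pi} = p$; this factorization in $\mathcal{O}_K$ shows $p$ splits.

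For the identification $\tilde{\mathcal{O}} = \ZZ + c_0 \mathcal{O}_K$, note that $\mathcal{O} \hookrightarrow \tilde{\mathcal{O}}$ is automatic, so $\tilde{\mathcal{O}}$ has some conductor $\tilde{c}$ dividing $c$, and the goal is to show $\tilde{c} = c_0$. I would compute $\tilde{c}$ one prime at a time. For $\ell \neq p$, the $\ell$-adic Tate modules of $E$ and $\tilde{E}$ are canonically identified, and the induced actions of $\tilde{\mathcal{O}} \otimes \ZZ_\ell$ and $\mathcal{O} \otimes \ZZ_\ell$ must coincide, forcing equality of $\ell$-parts of the conductors. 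For $\ell = p$, I would produce an auxiliary CM curve $E_0$ with $\End(E_0) = \ZZ + c_0 \mathcal{O}_K$ together with an isogeny $E \to E_0$ of degree $p^r$; under reduction this isogeny becomes purely inseparable (since $\tilde{E}$ is supersingular), so $\tilde{E}$ and $\tilde{E_0}$ are related by a Frobenius-twist isomorphism, and in particular the $(\ZZ + c_0\mathcal{O}_K)$-action on $\tilde{E_0}$ transports to an action on $\tilde{E}$. This exhibits an embedding of $\ZZ + c_0 \mathcal{O}_K$ into $\End(\tilde{E}) \cap [K]_{\tilde{E}}$, showing $p \nmid \tilde{c}$.

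The hard step is the local analysis at $p$: one must carefully track how purely inseparable isogenies interact with the CM action under reduction, and in particular verify that the transported action is by actual endomorphisms and not merely quasi-endomorphisms. A cleaner alternative would be to argue via $p$-divisible groups directly: on a supersingular $\tilde{E}$ the formal group has height $2$, so any embedding $K \hookrightarrow \End(\tilde{E}) \otimes \QQ$ that preserves the $p$-divisible group is automatically by a $\ZZ_p$-maximal order on the $p$-adic side, pinning down the $p$-part of $\tilde{\mathcal{O}}$ without explicit isogeny bookkeeping.
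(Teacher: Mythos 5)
This lemma is imported verbatim from Onuki (Lemma 3.1 of \cite{Onuki20}); the paper gives no proof of its own, so there is no internal argument to compare against. Your outline essentially reconstructs the standard proof, and it is sound, with two comments. First, the step you flag as hard --- transporting the $(\ZZ + c_0\mathcal{O}_K)$-action across the reduced degree-$p^r$ isogeny --- resolves cleanly: the reduced isogeny is purely inseparable, hence up to isomorphism the $p^r$-power Frobenius $F$, and for $\beta \in \End(\tilde{E})$ one has $\hat{F}\circ\beta^{(p^r)}\circ F = p^r\beta$, so conjugation by $F$ followed by division by $p^r$ is a genuine ring isomorphism $\End(\tilde{E}^{(p^r)}) \to \End(\tilde{E})$ carrying endomorphisms to endomorphisms (not merely quasi-endomorphisms), and compatibility with the characteristic-zero $K$-isogeny puts the image in the same copy of $[K]_{\tilde{E}}$. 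Your proposed alternative is in fact the cleanest route and makes the isogeny bookkeeping unnecessary: $\End(\tilde{E})\otimes\ZZ_p$ is the unique maximal order of the division algebra $B_{p,\infty}\otimes\QQ_p$, hence contains every integral element of every embedded quadratic field, so the conductor $\tilde{c}$ of $\End(\tilde{E})\cap[K]_{\tilde{E}}$ automatically satisfies $p\nmid\tilde{c}$; combined with the Tate-module computation at $\ell\neq p$ (where $T_\ell\tilde{E}\cong T_\ell E$ is free of rank one over $\mathcal{O}\otimes\ZZ_\ell$, whose stabilizer in $K\otimes\QQ_\ell$ is exactly $\mathcal{O}\otimes\ZZ_\ell$), this gives $\tilde{c}=c_0$.

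The one genuinely soft spot is the direction ``$p$ splits $\Rightarrow$ $\tilde{E}$ ordinary'': the reduction of $\phi_{\mathfrak{p}}$ having kernel of order $p$ does not by itself ``realize Frobenius,'' and on a supersingular curve both $\phi_{\mathfrak{p}}$ and $\phi_{\overline{\mathfrak{p}}}$ could a priori reduce to the unique connected subgroup scheme of order $p$, so no contradiction is immediate from your phrasing. The clean repair is the contrapositive: if $\tilde{E}$ is supersingular then $\End(\tilde{E})\otimes\QQ\cong B_{p,\infty}$, and an imaginary quadratic field embeds into $B_{p,\infty}$ only if $p$ does not split in it. With that substitution, and the Deuring-lifting (or Frobenius-factorization) argument you give for the converse, the first assertion is complete.
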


\subsection{Setup}

Throughout, we will use the following set of imaginary quadratic orders:

$$\mathcal{I}_{\ell,r} = \left\{\text{imaginary quadratic orders $\mathcal{O}$ : \parbox[center]{13em}{$\mathcal{O}$ is an $\ell$-fundamental order, \\ $(\ell) = \mathfrak{l}\overline{\mathfrak{l}}$ splits in $\mathcal{O}$, \\ and $[\mathfrak{l}]$ has order $r$ in $\cl(\mathcal{O})$.}} \right\}.$$

We note that $\mathcal{I}_{\ell, r}$ is a finite set, since the last two conditions require that $\ell^r$ is primitively represented by the norm form of $\mathcal{O}$. We are considering only orders with positive-definite norm forms, so there are only finitely many $\mathcal{O}$ where this condition holds. \\

We will also use the following related definitions: 
\begin{align*}
\mathrm{SS}^p & := \{\text{isomorphism classes of supersingular elliptic curves over $\overline{\FF}_p$}\}\\
d(\mathcal{I}_{\ell,r}) & := \{d : \text{$d$ is the discriminant of $\mathcal{O}$ for some $\mathcal{O} \in \mathcal{I}_{\ell,r}$}\}, \\
\Ell_{\mathcal{I}_{\ell,r}} & := \{\text{elliptic curves $E$ over $\mathbb{C}$ such that $\mathcal{O} \cong \End(E)$ for some $\mathcal{O} \in \mathcal{I}_{\ell,r}$ }\}, \\
\Ell_{\mathcal{I}_{\ell,r}}^p & := \{\text{$E \in \mathrm{SS}^p$ such that $\exists \mathcal{O} \in \mathcal{I}_{\ell, r}$ embedding optimally in $\End(E)$}\}, \\
M_{\ell, r} & := \max\{4, \max\{d_1d_2 / 4 : d_1, d_2 \in d(\mathcal{I}_{\ell, r})\}\}.
\end{align*} 

In the second set, we take curves up to isomorphism over $\CC$, and in the third up to isomorphism over $\overline{\mathbb{F}_p}$. \\

It is well-known that the $j$-invariants of elliptic curves with CM by an imaginary quadratic order $\mathcal{O}$ are the roots of $\mathcal{H}_{\mathcal{O}}(x)$. The $j$-invariants of the supersingular elliptic curves $E$ over $\overline{\mathbb{F}_p}$ with $\mathcal{O}$ optimally embedded in $\End(E)$ are the roots of the reduction of $\mathcal{H}_{\mathcal{O}}(x)$ modulo $p$. Since we are interested in curves that lie on the spine of $\mathcal{G}_\ell$, we will be interested in the $\FF_p$-roots of $\widetilde{\mathcal{H}_\mathcal{O}}$. The following result, which is Theorem 1.1 in  \cite{ChenXue22}, will be helpful in describing these roots:

\begin{theorem}[\cite{ChenXue22}, Theorem 1.1]\label{ChenXueRootCountThm}
 Let $K$ be an imaginary quadratic field and $\mathcal{O}$ be an order in $K$. Let $p$ be a prime inert in $K$ and strictly greater than $|\disc(\mathcal{O})|$, and $H_p$ be the set of $\FF_p$-roots of $\widetilde{\mathcal{H}_{\mathcal{O}}}(x)$. If $H_p$ is nonempty, then it admits a regular action by the $2$-torsion subgroup $\cl(\mathcal{O})[2] \subset \cl(\mathcal{O})$. In particular, the number of $\FF_p$-roots of $\widetilde{\mathcal{H}_\mathcal{O}}(x)$ is either zero or $|\cl(\mathcal{O})[2]|$. \\
Moreover, $H_p \neq \emptyset$ if and only if for every prime factor $q$ of $\disc(\mathcal{O})$, one of the two conditions below holds, depending on the parity of $q$:
\begin{enumerate}
\item $q \neq 2$ and the Legendre symbol $\left(\frac{-p}{q}\right) = 1$;
\item $q = 2$ and one of the following conditions holds:
\begin{enumerate}
\item $p \equiv 7 \pmod{8}$;
\item $-p + \frac{\disc(\mathcal{O})}{4} \equiv 0, 1$, or $4 \pmod{8}$;
\item $-p + \disc(\mathcal{O}) \equiv 1 \pmod{8}$.
\end{enumerate}
\end{enumerate}
\end{theorem}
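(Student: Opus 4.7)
The plan is to identify the $\overline{\FF_p}$-roots of $\widetilde{\mathcal{H}_\mathcal{O}}(x)$ with a $\cl(\mathcal{O})$-torsor, transport the Frobenius action to an explicit involution of this torsor, and then read off both the count and the nonemptiness criteria.

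First, since $p > |\disc(\mathcal{O})|$ is inert in $K$, Corollary \ref{ChenXueInjectivityCor} gives a bijection between isomorphism classes of elliptic curves over $\overline{\QQ}$ with CM by $\mathcal{O}$ and the $\overline{\FF_p}$-roots of $\widetilde{\mathcal{H}_\mathcal{O}}(x)$, induced by reduction modulo a fixed prime $\mathfrak{P}$ of the Hilbert class field $H_\mathcal{O}$ of $\mathcal{O}$ above $p$. After fixing a base point, I would label these roots $\tilde{j}_{[\mathfrak{b}]}$ for $[\mathfrak{b}] \in \cl(\mathcal{O})$, using the usual class group action on $\overline{\QQ}$-curves.

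Next, I would determine the Frobenius action. Since $p > |\disc(\mathcal{O})|$ is unramified in $H_\mathcal{O}/\QQ$, the geometric Frobenius $\pi_p$ on reductions is induced by the arithmetic Frobenius $F_\mathfrak{P} \in \Gal(H_\mathcal{O}/\QQ)$. The exact sequence $1 \to \cl(\mathcal{O}) \to \Gal(H_\mathcal{O}/\QQ) \to \Gal(K/\QQ) \to 1$ has complex conjugation $c$ mapping to the nontrivial element of $\Gal(K/\QQ)$ and acting on $\cl(\mathcal{O})$ by inversion. Since $p$ is inert, $F_\mathfrak{P}$ projects nontrivially to $\Gal(K/\QQ)$, so $F_\mathfrak{P} = c \cdot \tau_\mathfrak{a}$ for a unique $[\mathfrak{a}] \in \cl(\mathcal{O})$ depending on $\mathfrak{P}$; a short computation using $c \tau_\mathfrak{a} c^{-1} = \tau_{\mathfrak{a}^{-1}}$ shows $F_\mathfrak{P}^2 = 1$. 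Translating through the labelling, $\pi_p(\tilde{j}_{[\mathfrak{b}]}) = \tilde{j}_{[\mathfrak{a}][\mathfrak{b}]^{-1}}$, so $\tilde{j}_{[\mathfrak{b}]} \in H_p$ iff $[\mathfrak{b}]^2 = [\mathfrak{a}]$. This equation in an abelian group has either no solutions or forms a torsor under $\cl(\mathcal{O})[2]$; in the latter case the action $[t] \cdot \tilde{j}_{[\mathfrak{b}]} := \tilde{j}_{[t\mathfrak{b}]}$ is manifestly regular on $H_p$, giving the first half of the theorem.

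For the explicit nonemptiness criteria (1)--(2), I would apply genus theory for the order $\mathcal{O}$: $[\mathfrak{a}] \in \cl(\mathcal{O})^2$ if and only if every genus character $\chi_q$ for $q \mid \disc(\mathcal{O})$ vanishes on $[\mathfrak{a}]$. After pinning down an explicit ideal representative of $[\mathfrak{a}]$ from the fact that $F_\mathfrak{P}$ is a Frobenius above $p$, evaluating $\chi_q([\mathfrak{a}])$ becomes a local calculation: at odd $q$ it reduces to the Legendre symbol $\left(\frac{-p}{q}\right)$, and at $q = 2$ to conditions on $p$ and $\disc(\mathcal{O})/4$ modulo $8$. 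The main obstacle will be this last step: the odd-prime calculation is essentially quadratic reciprocity, but the $q = 2$ case is a genuine case-by-case analysis on the $2$-adic structure of $\mathcal{O}$ (depending on the $2$-adic valuation of $\disc(\mathcal{O})$ and of the conductor of $\mathcal{O}$), and it is there that the three disjunctive sub-conditions in (2) emerge.
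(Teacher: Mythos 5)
This statement is quoted from Chen--Xue \cite{ChenXue22} and the paper gives no proof of it, so there is nothing internal to compare against; I can only assess your argument on its own terms. Your strategy is the standard (and almost certainly the intended) one, and its first half is essentially a complete proof: under $p > |\disc(\mathcal{O})|$ the reduction map is injective on CM $j$-invariants (Corollary \ref{ChenXueInjectivityCor}), $p$ is unramified in the ring class field $H_\mathcal{O}$, the dihedral relation $c\tau_{\mathfrak{a}}c^{-1} = \tau_{\mathfrak{a}^{-1}}$ gives $F_{\mathfrak{P}}^2 = 1$, and the fixed-point equation $[\mathfrak{b}]^2 = [\mathfrak{a}]$ yields exactly the ``empty or a $\cl(\mathcal{O})[2]$-torsor'' dichotomy. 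Two small points to make explicit: the base point must be chosen conjugation-stable (e.g.\ $j(\mathcal{O})$, which is real) for $c$ to act by inversion on your labels, and the passage from $\tilde{j}^{\,p} = \tilde{j}$ to $F_{\mathfrak{P}}(j_{[\mathfrak{b}]}) = j_{[\mathfrak{b}]}$ uses the injectivity of reduction, so the hypothesis $p > |\disc(\mathcal{O})|$ is doing real work there.

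The genuine gap is in the second half. You reduce nonemptiness to ``$[\mathfrak{a}]$ lies in the principal genus,'' but you never identify $[\mathfrak{a}]$, and evaluating genus characters on an abstractly defined class is exactly the hard step; as written, conditions (1)--(2) are asserted as the expected output of an uncarried-out local calculation rather than derived. A cleaner execution avoids naming $[\mathfrak{a}]$ at all: $[\mathfrak{a}] \in \cl(\mathcal{O})^2$ iff $F_{\mathfrak{P}}$ restricted to the genus field $F$ equals $c|_F$, iff $p$ splits completely in $F^+ = F \cap \RR$ --- this is precisely the route the paper itself takes later (Theorem \ref{LiLiOuyangThm} and Lemma \ref{ArtinSymbolLemma}) --- and then the splitting of $p$ in the explicitly known multiquadratic field $F^+$ unwinds, via quadratic reciprocity, to $\left(\frac{-p}{q}\right) = 1$ at odd $q$ (the sign $\left(\frac{-1}{q}\right)$ absorbing whether $\sqrt{q^*}$ is real) and to the three $2$-adic subcases in (2), which depend on $\operatorname{ord}_2(\disc(\mathcal{O}))$ and cannot be waved away. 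So: right architecture, complete first half, but the explicit criterion --- the part of the theorem the paper actually uses quantitatively --- is a plan rather than a proof.
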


We note the following corollary, which has been noted elsewhere in the literature, for example in \cite{GrossZagier84}:

\begin{corollary}\label{UniqueFpRootCor}
Let $\mathcal{O}_D$ be the maximal order in an imaginary quadratic field $K$, and $p$ be a prime that does not split in $\mathcal{O}_D$. Suppose that the class number of $\mathcal{O}_D$ is odd. Then $\mathcal{H}_{\mathcal{O}_D}(x)$ has exactly one root in $\mathbb{F}_p$.
\end{corollary}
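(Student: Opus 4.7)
The plan is to combine an upper bound coming from Theorem~\ref{ChenXueRootCountThm} with a matching lower bound coming from a Frobenius parity count.

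For the upper bound, I would first observe that since $h(\mathcal{O}_D)$ is odd, Cauchy's theorem forces $\cl(\mathcal{O}_D)$ to have no element of order $2$, so $\cl(\mathcal{O}_D)[2]$ is trivial. Theorem~\ref{ChenXueRootCountThm} then immediately yields that the number of $\mathbb{F}_p$-roots of $\widetilde{\mathcal{H}_{\mathcal{O}_D}}(x)$ is either $0$ or $|\cl(\mathcal{O}_D)[2]|=1$.

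For the lower bound, I would use a Frobenius-orbit parity argument to rule out the case of zero roots. By Deuring's classical reduction theorem, since $p$ does not split in $K$, every elliptic curve with CM by $\mathcal{O}_D$ has supersingular reduction modulo $p$; in particular, all roots of $\widetilde{\mathcal{H}_{\mathcal{O}_D}}(x)$ lie in $\mathbb{F}_{p^2}$, since supersingular $j$-invariants do. Moreover, by Corollary~\ref{ChenXueInjectivityCor} this reduction is injective, so $\widetilde{\mathcal{H}_{\mathcal{O}_D}}(x)$ has exactly $h(\mathcal{O}_D)$ distinct roots in $\mathbb{F}_{p^2}$. The $p$-power Frobenius acts on this root set, partitioning it into orbits of size $1$ (the $\mathbb{F}_p$-roots) and size $2$ (conjugate pairs in $\mathbb{F}_{p^2}\setminus\mathbb{F}_p$). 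Consequently, the number of $\mathbb{F}_p$-roots is congruent to $h(\mathcal{O}_D)$ modulo $2$, hence is odd, hence at least $1$. Combining this with the upper bound gives the desired conclusion.

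The main subtlety I foresee is that both Theorem~\ref{ChenXueRootCountThm} and Corollary~\ref{ChenXueInjectivityCor} carry the hypothesis $p>|\disc(\mathcal{O}_D)|$, which is not stated in the corollary. Either this hypothesis is implicit in the intended reading (ramified $p$ would then be excluded automatically), or the finitely many smaller exceptional primes should be handled by direct inspection of the reductions, or one could appeal to the classical treatment in \cite{GrossZagier84} from which this statement is drawn.
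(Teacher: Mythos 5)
Your proof is correct, but it takes a genuinely different route from the one the paper sketches. The paper omits the proof and indicates that it proceeds by a case-by-case analysis of the explicit nonemptiness criteria in Theorem~\ref{ChenXueRootCountThm}, with Theorem~\ref{FundGenusTheoryThm} (genus theory) as the other key ingredient: odd class number forces $\mu=1$, which pins down the possible discriminants ($-4$, $-8$, or $-q$ with $q\equiv 3\pmod 4$ prime), and one then verifies conditions (1) and (2) of Theorem~\ref{ChenXueRootCountThm} in each case via quadratic reciprocity and the non-split hypothesis. You replace that entire case analysis with a soft argument: the upper bound comes from triviality of $\cl(\mathcal{O}_D)[2]$ exactly as the paper would use it, but nonemptiness follows from Deuring reduction (all $h$ roots lie in $\FF_{p^2}$), injectivity of reduction (Corollary~\ref{ChenXueInjectivityCor}, giving $h$ \emph{distinct} roots), and the observation that Frobenius orbits on the root set have size $1$ or $2$, so the number of $\FF_p$-roots is congruent to $h$ modulo $2$ and hence positive. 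This is shorter, avoids genus theory and reciprocity computations entirely, and is the cleaner argument; the paper's intended route has the minor advantage of producing the explicit list of odd-class-number discriminants and the congruence conditions on $p$, which is in the spirit of its later sections but is not needed for this statement. You correctly flag the one real issue, which the paper's own sketch shares: both Theorem~\ref{ChenXueRootCountThm} and Corollary~\ref{ChenXueInjectivityCor} assume $p$ inert and $p>|\disc(\mathcal{O}_D)|$, whereas the corollary as stated allows ramified $p$ and small inert $p$ (where, e.g., $\widetilde{\mathcal{H}_{\mathcal{O}_D}}$ can have repeated roots and your distinctness step fails); those finitely many exceptional primes must be handled separately or by appeal to \cite{GrossZagier84}. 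Since the paper does not resolve this either, your proposal is on equal footing with it.
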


We will omit the proof of Corollary \ref{UniqueFpRootCor}, which can be done via a case-by-case analysis from Theorem \ref{ChenXueRootCountThm}. Neither the proof nor the result will be needed in the rest of the paper. For future reference, however, we include Theorem \ref{FundGenusTheoryThm} below. This theorem is the other key ingredient in the proof of Corollary \ref{UniqueFpRootCor}.

\begin{theorem}[\cite{Cox}, Proposition 3.11]\label{FundGenusTheoryThm}
Let $D \equiv 0, 1 \pmod{4}$ be negative, and let $r$ be the number of odd primes dividing $D$. Define the number $\mu$ as follows: if $D \equiv 1 \pmod{4}$, then $\mu = r$, and if $D \equiv 0 \pmod{4}$ then $D = -4n$, where $n > 0$, and $\mu$ is determined by the following table:
\begin{center}
\begin{tabular}{ l|l } 
n & $\mu$ \\
\hline
$n \equiv 3 \pmod{4}$ & r \\
$n \equiv 1, 2 \pmod{4}$ & $r + 1$ \\
$n \equiv 4 \pmod{8}$ & $r + 1$ \\
$n \equiv 0 \pmod{8}$ & $r + 2$
\end{tabular}
\end{center}
Then the class group $C(D)$ has exactly $2^{\mu - 1}$ elements of order $\leq 2$.
\end{theorem}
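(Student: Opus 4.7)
The plan is to derive the formula from classical genus theory for binary quadratic forms of discriminant $D$. The starting observation is that for any finite abelian group $G$, the squaring map sits in a short exact sequence $1 \to G[2] \to G \to G^2 \to 1$, so $|G[2]| = |G/G^2|$. Applied to $G = C(D)$, counting elements of order dividing $2$ reduces to counting the cosets of $C(D)^2$, the \emph{genera} of discriminant $D$. The goal thus becomes showing that the number of genera equals $2^{\mu-1}$.

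Next I would introduce the \emph{assigned characters} on $C(D)$. For each odd prime $q \mid D$, define $\chi_q([\mathfrak{a}]) = \left(\frac{a}{q}\right)$, where $a$ is any integer coprime to $D$ properly represented by a form in the class. For $D$ even, introduce one or two characters at $2$ built from $\delta(a) = (-1)^{(a-1)/2}$ and $\varepsilon(a) = (-1)^{(a^2-1)/8}$, the precise combination dictated by $D/4 \pmod{8}$. The first subclaim is that these are well-defined class functions (two integers coprime to $D$ represented by forms in the same class differ by a square modulo $D$) and that they are group homomorphisms. Assembling them into a single map $\Phi \colon C(D) \to \{\pm 1\}^t$, where $t$ is the total number of characters used, the heart of the argument is the identity $\ker \Phi = C(D)^2$.

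The inclusion $C(D)^2 \subseteq \ker \Phi$ is immediate from the character values lying in $\{\pm 1\}$. The reverse inclusion is the main obstacle; the cleanest route is via class field theory, identifying $\ker \Phi$ with the subgroup of $C(D)$ that fixes the genus field $K\bigl(\sqrt{D_1}, \dots, \sqrt{D_k}\bigr)$ inside the Hilbert class field of $K = \QQ(\sqrt{D})$, and then recognizing the genus field as the maximal unramified extension of $K$ that is abelian over $\QQ$, whose corresponding subgroup of $C(D)$ is exactly $C(D)^2$. Alternatively, one can argue by elementary form reduction in the style of Gauss, but this route is more technical and case-heavy.

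With $\ker \Phi = C(D)^2$ in hand, the image of $\Phi$ has index $2$ inside $\{\pm 1\}^t$: the unique relation is the product formula $\prod_q \chi_q \equiv 1$, a restatement of quadratic reciprocity applied to an integer coprime to $D$ represented by the principal form. Hence $|C(D)/C(D)^2| = 2^{t-1}$. To conclude, one verifies $t = \mu$ by case analysis: when $D \equiv 1 \pmod 4$ no character at $2$ appears, giving $t = r$; when $D = -4n$, the residue of $n$ modulo $4$ and $8$ determines whether $\delta$, $\varepsilon$, or both are among the assigned characters, reproducing the tabulated values $\mu = r,\, r+1,\, r+1,\, r+2$. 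The remaining bookkeeping — checking well-definedness of the characters, verifying the single product relation, and enumerating the $2$-adic characters by congruence class — is routine once the genus-theoretic framework is set up.
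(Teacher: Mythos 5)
The paper does not prove this statement at all: it is quoted verbatim from Cox (Proposition 3.11), whose own argument is an elementary count of the ``ambiguous'' reduced forms $ax^2+bxy+cy^2$ with $b=0$, $a=b$, or $a=c$ (exactly the reduced forms whose class has order $\leq 2$), carried out case by case in $D$. Your route is the reverse one: use $|C(D)[2]|=|C(D)/C(D)^2|$ and compute the number of genera via assigned characters. That is a legitimate classical path (essentially Cox's Theorem 3.15 and \S 6 run backwards), and it buys a more structural explanation of the exponent $\mu$, at the cost of invoking the principal genus theorem, which is much deeper than the counting the statement actually requires.

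Two points in your sketch need repair before it is a proof. First, the assertion that the image of $\Phi$ has index exactly $2$ in $\{\pm 1\}^t$ is not a consequence of the product relation alone: reciprocity only shows the image lies in that index-$2$ subgroup, and the surjectivity onto it is the genuinely nontrivial half (classically supplied by Dirichlet's theorem on primes in progressions, or, in your class-field-theoretic framing, by the degree computation $[\text{genus field}:K]=2^{\mu-1}$ via linear disjointness of the quadratic subfields). If you take the CFT route you should lean on that degree count and drop the ``unique relation'' claim, which otherwise begs the question --- it is essentially equivalent to the ambiguous-class count you are trying to prove. Second, the statement covers arbitrary discriminants $D\equiv 0,1 \pmod 4$, including non-fundamental ones (which genuinely occur in this paper, e.g.\ $-44$ with conductor $2$); for those, $C(D)$ is a ring class group, and ``the maximal unramified extension of $K$ abelian over $\QQ$'' is the wrong object --- it computes a quotient of the ideal class group of $\mathcal{O}_K$, not of $C(D)$. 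You need instead the maximal subextension of the ring class field of the order that is abelian over $\QQ$, using that $\Gal(H_{\mathcal{O}}/\QQ)$ is generalized dihedral, and the genus characters then also involve primes dividing the conductor. With those two fixes the argument goes through, but as written it proves the fundamental-discriminant case only and assumes the key surjectivity.
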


\begin{remark}\label{RemarkOnInspiration} Corollary \ref{UniqueFpRootCor} inspired us to investigate the proportion of cycles of length $r$ in $\mathcal{G}_{p,\ell}$ that intersect the spine. To illustrate why, suppose that we have $\mathcal{O} \in \mathcal{I}_{\ell,r}$ of discriminant $d$, such that $p$ does not split in $\mathcal{O}$, and $p$ does not divide the conductor of $\mathcal{O}$. We have a class group action by $[\mathfrak{l}]$ on $\Ell_\mathcal{O} := \{\text{elliptic curves over $\CC$ that have $\End(E) = \mathcal{O}$}\}$. This action partitions $\Ell_\mathcal{O}$ into $\cl(\mathcal{O})/r$ orbits each of size $r$. Since $p$ does not split in $\mathcal{O}$, each curve $E \in \Ell_\mathcal{O}$ reduces to a supersingular curve modulo $p$. If we further assume that $p > d$, then the reduction map from elliptic curves with CM by $\mathcal{O}$ to supersingular curves is injective by Corollary \ref{ChenXueInjectivityCor}. The $\cl(\mathcal{O})/r$ orbits therefore reduce to distinct $r$-cycles in $\mathcal{G}_{\ell}$. Thus, if $\ell$ generates $\cl(\mathcal{O})$, and $\# \cl(\mathcal{O})$ is odd, Corollary \ref{UniqueFpRootCor} shows us that the reduced curves form a cycle of odd length that is guaranteed to intersect the spine. This provides a way of constructing cycles along the spine. \\
\end{remark}

Finally, in order to give precise counts of the number of cycles through the spine, we will need the following lemma:

\begin{lemma}\label{KanekoLemma}
Let $p > M_{\ell, r}$. Then for any prime $\mathfrak{p}$ and number field $L$ such that all curves in $\Ell_{\mathcal{I}_{\ell, r}}$ are defined over $L$ and have good reduction at $\mathfrak{p}$, the reduction map $\rho_\mathfrak{p} : \Ell_{\mathcal{I}_{\ell, r}} \to \Ell_{\mathcal{I}_{\ell, r}}^p$ is injective.
\end{lemma}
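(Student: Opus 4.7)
The plan is to analyze the reduction of two elliptic curves $E_1, E_2 \in \Ell_{\mathcal{I}_{\ell,r}}$ that reduce to the same supersingular curve $\tilde{E}$ modulo $\mathfrak{p}$, and split into two cases based on whether their endomorphism orders coincide. Let $\mathcal{O}_{D_i} = \End(E_i) \in \mathcal{I}_{\ell, r}$.

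First I would verify that each reduction gives an \emph{optimal} embedding $\mathcal{O}_{D_i} \hookrightarrow \End(\tilde{E})$. Since the reduction is supersingular, $p$ does not split in the quadratic field $K_i$ containing $\mathcal{O}_{D_i}$, so Lemma \ref{OnukiLemma} applies. Writing the conductor of $\mathcal{O}_{D_i}$ as $p^s c_0$ with $p \nmid c_0$, one observes that the conductor is bounded by $\sqrt{|D_i|}$, and the hypothesis $p > M_{\ell, r} \geq |D_1||D_2|/4 \geq |D_i|$ (using $|D_j| \geq 3$ for the other discriminant, together with the $\max\{4, \cdot\}$ in the definition of $M_{\ell, r}$) forces $s = 0$. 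Hence the image of the reduced endomorphism ring intersected with $[K_i]_{\tilde{E}}$ is precisely $[\mathcal{O}_{D_i}]_{\tilde{E}}$, giving an optimal embedding.

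Next I handle the two cases. If $D_1 \neq D_2$, the images of $\mathcal{O}_{D_1}$ and $\mathcal{O}_{D_2}$ in $\End(\tilde{E})$ are distinct (since two abstractly non-isomorphic orders cannot share the same image in a single ring). Kaneko's Theorem \ref{KanekoOrderThm} then yields $|D_1||D_2| \geq 4p$, directly contradicting the hypothesis $p > M_{\ell, r} \geq |D_1||D_2|/4$. So we must have $D_1 = D_2 =: D$. In this case both $E_1, E_2$ have CM by the same order $\mathcal{O}_D$, and since $p > M_{\ell, r} \geq \max\{4, D^2/4\} \geq |D|$ and $p$ is inert (or ramified) in $K$, Corollary \ref{ChenXueInjectivityCor} tells us that the reduction map on isomorphism classes of CM elliptic curves by $\mathcal{O}_D$ is injective, forcing $E_1 \cong E_2$.

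The argument is therefore a straightforward combination of two results already in hand; the only step requiring some care is verifying that $p > M_{\ell, r}$ is strong enough to supply both the $p > |\disc(\mathcal{O})|$ hypothesis of Corollary \ref{ChenXueInjectivityCor} (needed in the same-discriminant case) and the Kaneko-style bound $|D_1||D_2| < 4p$ (needed in the different-discriminant case). The $\max\{4, \cdot\}$ in the definition of $M_{\ell, r}$ is precisely what ensures both inequalities go through uniformly, even at the smallest discriminants. I do not foresee any substantive obstacle beyond this bookkeeping.
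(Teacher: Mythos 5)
Your proposal is correct and follows essentially the same route as the paper's proof: both establish $p > |D|$ from $p > M_{\ell,r}$, invoke Lemma \ref{OnukiLemma} to get optimality of the reduced embeddings, apply Theorem \ref{KanekoOrderThm} in the distinct-discriminant case, and apply Corollary \ref{ChenXueInjectivityCor} in the equal-discriminant case. The bookkeeping you flag (the role of $\max\{4,\cdot\}$ in making $p > |d|$ follow from $p > d_1 d_2/4$) is exactly the point the paper also checks.
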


\begin{proof}
Our assumption on the size of $p$ is chosen to guarantee that for any $\tilde{E} \in \Ell_{\mathcal{I}_{\ell,r}}^p$, there is only one $\mathcal{O} \in \mathcal{I}_{\ell, r}$ that is optimally embedded in $\End(\tilde{E})$, and that there is only one such embedding of $\mathcal{O}$. The assumption will further guarantee that for any $E \in \Ell_{\mathcal{I}_{\ell, r}}$, $\rho_\mathfrak{p}(\End(E))$ is optimally embedded in $\End(\rho_\mathfrak{p}(E))$. Together, these will prove that $\rho_\mathfrak{p}$ is injective into $\Ell_{\mathcal{I}_{\ell,r}}^p$. \\ 

By Lemma \ref{OnukiLemma}, in order to show that $\rho_\mathfrak{p}$ optimally embeds endomorphism rings, it suffices to show that our assumption guarantees that $p > |d|$ for all $d \in d(\mathcal{I}_{\ell, r})$. This is elementary, since for $|d| > 4$, we have that $d^2 / 4 > |d|$. Thus $p > 4$ and $p > d_1d_2/4$ for all $d_1, d_2 \in \mathcal{I}_{\ell, r}$ guarantee that $p > |d|$ for all $d \in d(\mathcal{I}_{\ell, r})$. These conditions hold by the assumption that $p > M_{\ell,r}$. \\

To show that there is only one $\mathcal{O} \in \mathcal{I}_{\ell, r}$ that optimally embeds in $\End(\tilde{E})$ for any $\tilde{E} \in \Ell_{\mathcal{I}_{\ell, r}}^p$, we use Theorem \ref{KanekoOrderThm}. This says that if there is a maximal order in $B_{p,\infty}$ containing optimally embedded copies of both $\mathcal{O}_{d_1}$ and $\mathcal{O}_{d_2}$, then $p < \frac{d_1d_2}{4}$. Thus our assumption guarantees that $B_{p,\infty}$ contains no such order for $d_1, d_2 \in d(\mathcal{I}_{\ell, r})$. Since $\End(\tilde{E})$ is a maximal order in $B_{p, \infty}$, it follows that there is only one $\mathcal{O} \in \mathcal{I}_{\ell, r}$ that optimally embeds in $\End(\tilde{E})$.  \\ 

We now show that the reduction map $\rho_\mathfrak{p}$ is injective. Let $E_1, E_2 \in \Ell_{\mathcal{I}_{\ell, r}}$ be elliptic curves with CM by $\mathcal{O}_{d_1}$, $\mathcal{O}_{d_2}$, respectively. First, consider the case that $d_1 = d_2$. In this case, both $j(E_1)$ and $j(E_2)$ satisfy $\mathcal{H}_{\mathcal{O}_{d_1}}$. We have shown that our assumption guarantees $p > |d_1|$, and by Corollary \ref{ChenXueInjectivityCor}, this guarantees that $\mathcal{H}_{\mathcal{O}_{d_1}}$ has no repeated roots modulo $p$. Thus we see that $E_1$ and $E_2$ cannot reduce to supersingular curves with the same $j$-invariant, and so $\rho_\mathfrak{p}(E_1) \neq \rho_\mathfrak{p}(E_2)$. \\

For the second case, suppose that $d_1 \neq d_2$, and suppose $\rho_\mathfrak{p}(E_1) = \rho_\mathfrak{p}(E_2) = E$. Then since $\rho_\mathfrak{p}$ optimally embeds the endomorphism rings of $E_1, E_2$ into $\End(E)$, we have that $\End(E)$ is a maximal order of $B_{p, \infty}$ with optimally embedded copies of $\mathcal{O}_{d_1}$ and $\mathcal{O}_{d_2}$. This contradicts Theorem \ref{KanekoOrderThm}, and so the result is proven.
\end{proof}

\begin{remark}\label{KanekoLemmaRmk}
We remark that the results and proof of Lemma \ref{KanekoLemma} continue to hold for a finite set $\{r_1, \hdots, r_n\}$ of $r$-values, provided that $M_{\ell, r}$ is replaced by the maximum of the $M_{\ell, r_i}$. This will be used in Section \ref{SecEvenCase} when considering the case of even cycles.
\end{remark}

\section{$\FF_p$-vertices on oriented isogeny cycles and the even class number case}\label{SecFpVerticesAndOrientations}

In this section we will count the number $\FF_p$ vertices on oriented isogeny cycles. This will allow us to extend the ideas from Remark \ref{RemarkOnInspiration} to the case of even class number. Our strategy is to apply results of Chen and Xue \cite{ChenXue22} about the class group action on $\FF_p$-roots of the Hilbert class polynomial to oriented supersingular curves. We will conclude that for $p > d$, the number of $r$-cycles obtained from primitive $\mathcal{O}$-orientations that intersect the spine is either $0$ or $2^{\mu}$, where $\mu$ defined as in Theorem \ref{FundGenusTheoryThm}. \\

Our first theorem in this section describes the action of the class group on $\FF_p$-vertices of oriented isogeny cycles:

\begin{theorem}\label{FpVertexOrientationSymmetry}
Let $p > |\disc(\mathcal{O})|$. Then the restriction of the action of $\cl(\mathcal{O}) \times \langle \pi_p \rangle$ to $\cl(\mathcal{O})[2] \times \langle \pi_p \rangle$ acts transitively on the $\FF_p$-vertices. 
\end{theorem}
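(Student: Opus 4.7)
The plan is to combine Theorem \ref{ChenXueRootCountThm} with a direct argument using the fact that Frobenius twists orientations by complex conjugation on $\FF_p$-rational curves.

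First I would analyze the Frobenius action on an $\FF_p$-vertex $(E,\iota)$. Since $j(E^{(p)}) = j(E)^p = j(E)$, we have $E^{(p)} \cong E$ over $\overline{\FF_p}$, and the pushed-forward orientation satisfies
$$\iota^{(p)}(\alpha) = \frac{1}{p}\pi_p\,\iota(\alpha)\,\hat{\pi}_p = \pi_p\,\iota(\alpha)\,\pi_p^{-1}$$
in $\End(E)\otimes\QQ$. The hypothesis $p > |\disc(\mathcal{O})|$ precludes $K = \QQ(\sqrt{-p})$ (since $|\disc(\mathcal{O})| \geq |\disc(\mathcal{O}_K)| \in \{p,4p\}$ in that case), so $\iota(K) \neq \QQ(\pi_p)$ and conjugation by $\pi_p$ acts on $\iota(K)$ as the nontrivial Galois involution. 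Hence $\iota^{(p)} = \bar\iota$, so $\pi_p$ swaps the two primitive $\mathcal{O}$-orientations on any $\FF_p$-rational curve.

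Next I would verify that $\cl(\mathcal{O})[2]\times\langle\pi_p\rangle$ preserves the set $V$ of $\FF_p$-vertices. Frobenius does so by preserving $j$-invariants. For $[\mathfrak{a}]\in\cl(\mathcal{O})[2]$, the identity $\mathfrak{a}\bar{\mathfrak{a}} = (N(\mathfrak{a}))$ gives $[\bar{\mathfrak{a}}] = [\mathfrak{a}]^{-1} = [\mathfrak{a}]$, and since $E[\bar\iota(\mathfrak{a})] = E[\iota(\bar{\mathfrak{a}})]$, the curves $[\mathfrak{a}]\ast(E,\iota)$ and $[\mathfrak{a}]\ast(E,\bar\iota)$ share the same underlying $j$-invariant. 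Since the $\cl(\mathcal{O})$- and $\langle\pi_p\rangle$-actions commute (Proposition \ref{OrientedActionProp}),
$$\pi_p\ast\bigl([\mathfrak{a}]\ast(E,\iota)\bigr) = [\mathfrak{a}]\ast(E,\bar\iota),$$
so the $j$-invariant of $[\mathfrak{a}]\ast(E,\iota)$ is fixed by Frobenius, hence lies in $\FF_p$, proving $[\mathfrak{a}]\ast(E,\iota) \in V$.

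Finally, for transitivity, let $V_1 = (E_1,\iota_1)$ and $V_2 = (E_2,\iota_2) \in V$. By Theorem \ref{ChenXueRootCountThm}, the $\cl(\mathcal{O})[2]$-action on the $\FF_p$-roots of $\widetilde{\mathcal{H}_\mathcal{O}}$ is transitive, so there exists $[\mathfrak{a}]\in\cl(\mathcal{O})[2]$ with $j\bigl([\mathfrak{a}]\ast V_1\bigr) = j(E_2)$. There are exactly two primitive $\mathcal{O}$-orientations on $E_2$ up to $K$-isomorphism, namely $\iota_2$ and $\bar\iota_2$ (inequivalent because $\Aut(E_2) = \{\pm 1\}$ acts trivially by conjugation), so $[\mathfrak{a}]\ast V_1 \in \{V_2,\,(E_2,\bar\iota_2)\}$; in the first case we are done, and in the second, applying $\pi_p$ yields $V_2$ by the orientation-swap above. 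The main technical obstacle will be identifying the action of $\cl(\mathcal{O})[2]$ on $j$-invariants coming from Definition \ref{OrientedActionDef} with the Chen--Xue action on $\FF_p$-roots of $\widetilde{\mathcal{H}_\mathcal{O}}$; this Deuring-style compatibility requires lifting $(E,\iota)$ to an $\mathcal{O}$-CM curve in characteristic zero and invoking the injectivity of reduction (Corollary \ref{ChenXueInjectivityCor}) together with Lemma \ref{OnukiLemma}, both of which rely on $p > |\disc(\mathcal{O})|$.
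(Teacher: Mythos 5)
Your overall strategy---let Theorem \ref{ChenXueRootCountThm} do the work on $j$-invariants and handle the two orientations above each $\FF_p$-rational $j$-invariant via a Frobenius swap---is essentially the paper's. The paper organizes it differently (it invokes transitivity of the full $\cl(\mathcal{O})\times\langle\pi_p\rangle$-action from Proposition \ref{OrientedActionProp} and then characterizes which elements preserve $\FF_p$-rationality, rather than constructing the connecting element by hand), but the key input is the same, and you are right to flag the Deuring-style compatibility of the Chen--Xue action with Definition \ref{OrientedActionDef} as the point needing care.

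There is, however, one genuine gap, and it sits under the claim your whole argument leans on: $\iota^{(p)}=\bar\iota$ for an $\FF_p$-vertex. From $\iota(K)\neq\QQ(\pi_p)$ you conclude that conjugation by $\pi_p$ acts on $\iota(K)$ as the nontrivial Galois involution, but that inference presupposes that conjugation by $\pi_p$ preserves $\iota(K)$ at all, which is not automatic: writing $\iota(\sqrt{D})=a\pi_p+v$ with $v$ orthogonal to $\pi_p$ in the trace-zero part of $B_{p,\infty}$, conjugation sends it to $a\pi_p-v$, which lies back in $\iota(K)$ only when $a=0$. Excluding $K=\QQ(\sqrt{-p})$ rules out the identity action but not the possibility that $\pi_p\iota(K)\pi_p^{-1}$ is a different quadratic subfield. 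The repair uses exactly the hypothesis $p>|\disc(\mathcal{O})|$ via Theorem \ref{KanekoOrderThm}: $\iota^{(p)}=\pi_p\iota\pi_p^{-1}$ is again an optimal embedding of $\mathcal{O}$ into $\End(E)$, and two optimal embeddings with distinct images generating isomorphic fields would force $\disc(\mathcal{O})^2\geq p^2$, a contradiction; hence the images coincide, the induced automorphism of $\iota(K)$ is nontrivial, and $\iota^{(p)}=\bar\iota$ follows. The same uniqueness-of-image argument is what your later assertion that $E_2$ carries exactly the two orientations $\iota_2,\bar\iota_2$ really rests on (the parenthetical you give, $\Aut(E_2)=\{\pm 1\}$, fails at $j=0,1728$, but only ``at most two'' is needed). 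With these points supplied, the proof goes through.
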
 

\begin{proof}
By Proposition \ref{OrientedActionProp}, the action of $\cl(\mathcal{O}) \times \langle \pi_p \rangle$ is transitive on the vertices of $\mathcal{G}_{\mathcal{O}, \ell}$, so we need only show that for $k \in \{1,2\}$ and $(E, \iota)$ with $j(E) \in \FF_p$, $(E^{(\mathfrak{a}, \pi_p^k)}, \iota^{(\mathfrak{a}, \pi_p^k)})$ has $j(E^{(\mathfrak{a}, \pi_p^k)}) \in \FF_p$ if and only if $[\mathfrak{a}] \in \cl(\mathcal{O})[2]$. \\

Suppose that $j(E) \in \FF_p$. Then $j(E^{(p)}) = j(E)$. Since $E^{(\mathfrak{a}, \pi_p)} = (E^{(p)})^{\mathfrak{a}}$, we are reduced to showing that $j(E^{\mathfrak{a}}) \in \FF_p$ if and only if $[\mathfrak{a}] \in \cl(\mathcal{O})[2]$. This follows from Theorem \ref{ChenXueRootCountThm} under the assumption that $p > |\disc(\mathcal{O})|$.
\end{proof}

As a corollary, we see that any two cycles in $\mathcal{G}_{\mathcal{O},\ell}$ containing at least one $\FF_p$-vertex must contain the same number of $\FF_p$-vertices:

\begin{corollary}\label{FpVertexOrientationSymmetryCorollary}
Let $C_1$, $C_2$ be cycles in $\mathcal{G}_{\mathcal{O}, \ell}$ such that both contain an $\FF_p$-vertex. If $p > |\disc(\mathcal{O})|$, then $C_1$ and $C_2$ have the same number of $\FF_p$-vertices.
\end{corollary}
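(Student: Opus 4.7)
The plan is to combine Theorem~\ref{FpVertexOrientationSymmetry} with the commutativity assertion in Proposition~\ref{OrientedActionProp} to transport $C_1$ onto $C_2$ by an element of $\cl(\mathcal{O})[2] \times \langle\pi_p\rangle$, and then observe that any such element preserves the $\FF_p$-vertex set.

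First I would identify each cycle in $\mathcal{G}_{\mathcal{O},\ell}$ with an orbit of $\langle[\mathfrak{l}]\rangle$ on the vertex set. By Proposition~\ref{OrientedVolcanoeProp}, the unique cycle in a component of $\mathcal{G}_{\mathcal{O},\ell}$ is its rim, and the edges along this rim correspond to the class group action by $[\mathfrak{l}]$ (with $[\overline{\mathfrak{l}}] = [\mathfrak{l}]^{-1}$ in $\cl(\mathcal{O})$, since $\ell$ splits and $(\ell)$ is principal). Thus the vertex set of any cycle $C \subset \mathcal{G}_{\mathcal{O},\ell}$ coincides with $\langle[\mathfrak{l}]\rangle \cdot v$ for any $v \in C$.

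Next, choose $v_1 \in C_1$ and $v_2 \in C_2$ with $j(v_1), j(v_2) \in \FF_p$, and write $F$ for the set of all $\FF_p$-vertices of $\mathcal{G}_{\mathcal{O},\ell}$. Since $p > |\disc(\mathcal{O})|$, Theorem~\ref{FpVertexOrientationSymmetry} provides some $(\mathfrak{a}, \pi_p^k) \in \cl(\mathcal{O})[2] \times \langle\pi_p\rangle$ with $(\mathfrak{a},\pi_p^k) * v_1 = v_2$, and the transitivity statement there implicitly tells us that $F$ is stabilized setwise by $\cl(\mathcal{O})[2] \times \langle\pi_p\rangle$.

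Finally, by Proposition~\ref{OrientedActionProp} the element $(\mathfrak{a},\pi_p^k)$ commutes with the action of $[\mathfrak{l}]$, so it carries the $\langle[\mathfrak{l}]\rangle$-orbit of $v_1$ bijectively onto the $\langle[\mathfrak{l}]\rangle$-orbit of $v_2$, i.e., the vertex set of $C_1$ bijectively onto that of $C_2$. Because this element also preserves $F$, the bijection restricts to a bijection $C_1 \cap F \to C_2 \cap F$, giving $|C_1 \cap F| = |C_2 \cap F|$. The only real subtlety is the bookkeeping in identifying oriented cycles with $\langle[\mathfrak{l}]\rangle$-orbits; once this is nailed down, the rest is a formal consequence of the commuting transitive actions established earlier.
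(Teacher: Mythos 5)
Your proposal is correct and follows essentially the same route as the paper: both transport an $\FF_p$-vertex of $C_1$ to one of $C_2$ via the element of $\cl(\mathcal{O})[2]\times\langle\pi_p\rangle$ supplied by Theorem~\ref{FpVertexOrientationSymmetry}, note that this element preserves the set of $\FF_p$-vertices, and conclude that it matches $C_1\cap F$ with $C_2\cap F$. The only cosmetic difference is that you justify ``$g$ carries $C_1$ onto $C_2$'' via the identification of cycles with $\langle[\mathfrak{l}]\rangle$-orbits and the commutativity in Proposition~\ref{OrientedActionProp}, whereas the paper simply observes that the action preserves adjacency and hence takes cycles to cycles.
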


\begin{proof}
By Proposition \ref{OrientedVolcanoeProp}, we know that each connected component of $\mathcal{G}_{K, \ell}$ has at most one cycle. By definition \ref{OrientedGraphDef}, $\mathcal{G}_{\mathcal{O}, \ell}$ is the subgraph of $\mathcal{O}$-rims, so we see that $\mathcal{G}_{\mathcal{O}, \ell}$ is a collection of disjoint cycles, possibly containing isolated vertices. Since the action of $\cl(\mathcal{O}) \times \langle \pi_p \rangle$ preserves adjacency, we get that $\cl(\mathcal{O}) \times \langle \pi_p \rangle$ takes cycles of $\mathcal{G}_{\mathcal{O}, \ell}$ to cycles. \\

Let $C_1, C_2$ be cycles in $\mathcal{G}_{\mathcal{O}, \ell}$, and let $C_i^p = \{\text{vertices $(E, \iota) \in C_i$ such that $j(E) \in \FF_p$}\}$. Let $v_1, v_2$ be elements of $C_1^p, C_2^p$, respectively. We will show that $\# C_1^p = \# C_2^p$. By Theorem \ref{FpVertexOrientationSymmetry}, there exists an element $g \in \cl(\mathcal{O})[2] \times \langle \pi_p \rangle$ taking $v_1$ to $v_2$, and therefore necessarily taking $C_1$ to $C_2$. Further, $g$ acts injectively, and takes $C_1^p$ to $C_2^p$. We see therefore that $\# C_2^p \geq \# C_1^p$. Replacing $g$ by $g^{-1}$ gives the opposite inequality, which proves that $\# C_1^p = \# C_2^p$.
\end{proof}

Finally, we will show that for sufficiently large $p$ and odd $r$, the number of $\FF_p$-vertices on a cycle in $\mathcal{G}_{\mathcal{O}, \ell}$ is either $0$ or $1$, for any $\mathcal{O} \in \mathcal{I}_{\ell, r}$. \\

We begin by making explicit a result from Arpin, Chen, Lauter, Scheidler, Stange, and Tran \cite{Arpin+22}:

\begin{lemma}\label{CyclesInGOdellLengthrLemma}
Let $r$ be odd, and $\mathcal{O} \in \mathcal{I}_{\ell, r}$. Then every cycle in $\mathcal{G}_{\mathcal{O}, \ell}$ has length $r$.
\end{lemma}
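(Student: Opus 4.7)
The plan is to identify the cycle of $\mathcal{G}_{\mathcal{O}, \ell}$ through a primitively $\mathcal{O}$-oriented curve $(E, \iota)$ with the orbit of $(E, \iota)$ under the cyclic subgroup $\langle [\mathfrak{l}] \rangle \leq \cl(\mathcal{O})$, and then to combine the defining property of $\mathcal{I}_{\ell, r}$---that $[\mathfrak{l}]$ has order $r$---with the free action of $\cl(\mathcal{O})$ to conclude the orbit has size exactly $r$.

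First, I would verify that $\mathcal{G}_{\mathcal{O}, \ell}$ is a disjoint union of cycles. By Proposition \ref{OrientedVolcanoeProp}, each connected component of $\mathcal{G}_{K, \ell}$ contains at most one cycle, which comprises exactly the primitively $\mathcal{O}$-oriented vertices in the component. Because $(\ell) = \mathfrak{l}\overline{\mathfrak{l}}$ splits in $\mathcal{O}$, each primitively $\mathcal{O}$-oriented vertex $(E, \iota)$ admits two $\ell$-isogeny neighbors in $\mathcal{G}_{\mathcal{O}, \ell}$, namely $\mathfrak{l} * (E, \iota)$ and $\overline{\mathfrak{l}} * (E, \iota)$. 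Writing $[\overline{\mathfrak{l}}] = [\mathfrak{l}]^{-1}$ (since $[\mathfrak{l}][\overline{\mathfrak{l}}] = [(\ell)] = 1$), the odd-$r$ hypothesis excludes $r \in \{1, 2\}$ and so ensures $[\mathfrak{l}]^2 \neq 1$, whence $[\mathfrak{l}] \neq [\overline{\mathfrak{l}}]$. Thus each vertex has two distinct neighbors and $\mathcal{G}_{\mathcal{O}, \ell}$ is $2$-regular, hence a disjoint union of cycles.

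Next, I would extract freeness of the $\cl(\mathcal{O})$-action from Proposition \ref{OrientedActionProp}: that proposition gives point stabilizers in $\cl(\mathcal{O}) \times \langle \pi_p \rangle$ that are uniformly either trivial or $\langle \pi_p \rangle$. In either case, the stabilizer lies in the second factor, so the restriction of the action to $\cl(\mathcal{O}) \times \{1\}$ has trivial stabilizer, i.e., $\cl(\mathcal{O})$ alone acts freely on the primitively $\mathcal{O}$-oriented supersingular curves.

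Finally, starting at $(E, \iota)$ and iterating the action of $\mathfrak{l}$, I trace out the cycle through $(E, \iota)$: the vertices $\mathfrak{l}^k * (E, \iota)$ for $0 \leq k < r$ are pairwise distinct by freeness together with $[\mathfrak{l}]^k \neq 1$ for such $k$, and $\mathfrak{l}^r * (E, \iota) = (E, \iota)$ closes the cycle. Hence the cycle has length exactly $r$. I do not anticipate a significant obstacle; the only mild subtlety is distinguishing freeness of the full product action from freeness of the $\cl(\mathcal{O})$-action, but the former immediately implies the latter because any nontrivial stabilizer sits in the $\langle \pi_p \rangle$ factor.
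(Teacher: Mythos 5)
Your proof is correct and follows essentially the same route as the paper's: both identify the two horizontal edges at each vertex of $\mathcal{G}_{\mathcal{O},\ell}$ with the actions of $[\mathfrak{l}]$ and $[\overline{\mathfrak{l}}]$, deduce that $\mathcal{G}_{\mathcal{O},\ell}$ is a disjoint union of cycles, and use freeness of the $\cl(\mathcal{O})$-action (extracted from the stabilizer statement of Proposition \ref{OrientedActionProp}) to equate the cycle length with the order $r$ of $[\mathfrak{l}]$. The only slip is your claim that odd $r$ excludes $r=1$ (it does not), but this edge case is immaterial here since the lemma is applied only for $r>2$.
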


\begin{proof}
We will show that every edge in $\mathcal{G}_{\mathcal{O}, \ell}$ comes from the action of $[\mathfrak{l}]$ or $[\overline{\mathfrak{l}}]$ via a counting argument. \\

Since $\ell$ splits in $\mathcal{O}$ and does not divide the conductor of $\mathcal{O}$, we have by \cite[Proposition 2.15]{Arpin+22} that there are 2 horizontal $\ell$-isogenies out of each $(E, \iota) \in \mathcal{G}_{\mathcal{O}, \ell}$. We now consider the isogeny $\phi_\mathfrak{l} : (E, \iota) \to [\mathfrak{l}] \star (E, \iota)$ whose kernel is $E[\iota(\mathfrak{l})]$. Since $N(\mathfrak{l}) = \ell$, $\phi_{\mathfrak{l}}$ is an $\ell$-isogeny, and by definition of the orientation on $[\ell] \star (E, \iota)$, $\phi_{\mathfrak{l}}$ is a $K$-oriented $\ell$-isogeny. Thus $\phi_{\mathfrak{l}}$ is one of the horizontal edges out of $(E, \iota)$, and by the same argument we get an isogeny $\phi_{\overline{\mathfrak{l}}}$ that gives the other horizontal edge. \\

The previous paragraph allows us to explicitly describe the action of $[\mathfrak{l}]$ on $\mathcal{G}_{\mathcal{O}, \ell}$. Since each vertex in $\mathcal{G}_{\mathcal{O}, \ell}$ has two horizontal $\ell$-isogenies out of it, we see that there are no isolated vertices in $\mathcal{G}_{\mathcal{O}, \ell}$, and so $\mathcal{G}_{\mathcal{O}, \ell}$ is a disjoint union of cyclic subgraphs. Further, $[\mathfrak{l}]$ acts on $\mathcal{G}_{\mathcal{O}, \ell}$ by cyclicly permuting the vertices in each cycle. The length of each cycle is therefore the minimum $s$ such that $[\mathfrak{l}^s]$ acts as the identity on each vertex. By Proposition \ref{OrientedActionProp}, the point stabilizers of the vertices in $\mathcal{G}_{\mathcal{O}, \ell}$ are all either trivial or $\langle \pi_p \rangle$, so we see that we must have $[\mathfrak{l}^s]$ is the identity in $\cl(\mathcal{O})$. Thus the length of each cycle is equal to the order of $[\mathfrak{l}]$ in the class group, namely $r$. \\
\end{proof} 

We aim to prove that for sufficiently large primes $p$, every odd cycle intersecting the spine contains at most one $\FF_p$-vertex. Our first step is to prove Lemma \ref{FrobActionOnCyclesLemma} below, which says that for large enough $p$, a cycle $C$ of length $r$ through the spine is either fixed by action of Frobenius, or its orientation is reversed. To make this precise, we recall the following definition from \cite{Arpin+22}:

\begin{definition}\label{BackwardWalkDef}
Given a cycle $C := \phi_1, \phi_2, \hdots, \phi_n$ in $\mathcal{G}_\ell$, we define the \emph{opposite cycle} to be the cycle $\hat{C}$ given by the dual isogenies in the reverse order. Graph theoretically, the opposite cycle is a cycle that traverses the same vertices in the opposite order. 
\end{definition}  

\begin{remark}\label{SafeArbitraryAssignmentRmk}
We remark that Definition \ref{BackwardWalkDef} requires special care at the vertices $j = 0$ and $j = 1728$. The extra automorphisms at these vertices make it possible for more than one outgoing edge to correspond to the same incoming edge when taking the dual. This subtelty is handled in \cite{Arpin+22} by defining a \emph{safe arbitrary assignment} (see Definition 3.13 in \cite{Arpin+22}). For our purposes, it is sufficient to know that we can make suitable choices so that every cycle has a unique corresponding opposite cycle. Further, if $\theta$ is the endomorphism corresponding to the composition of the isogenies in $C$, then $\pm \hat{\theta}$ is the endomorphism corresponding to the composition of the isogenies in $\hat{C}$. The interested reader can consult section 3 of \cite{Arpin+22} for details.
\end{remark}

\begin{lemma}\label{FrobActionOnCyclesLemma}
Let $p > M_{\ell, r}$, $\pi_p$ be Frobenius, and $C$ be a cycle of length $r$ in $\mathcal{G}_\ell$. Suppose that $C$ contains an $\FF_p$-vertex $v$. Then we have that $\pi_p(C) \in \{C, \hat{C}\}$. 
\end{lemma}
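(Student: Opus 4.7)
The plan is to count cycles of length $r$ through $v$ in $\mathcal{G}_\ell$, show that there are exactly two (namely $C$ and $\hat C$), and then conclude from the fact that $\pi_p$ fixes $v$ that $\pi_p(C)\in\{C,\hat C\}$.

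First I would observe that $\pi_p$ acts as a graph automorphism of $\mathcal{G}_\ell$: it sends an isogeny $\phi$ to its Galois conjugate $\phi^{(p)}$, preserving adjacency, multiplicity of edges, and lengths of isogeny cycles. Because $j(v)\in\FF_p$, the vertex $v$ is fixed, so $\pi_p(C)$ is again a cycle of length $r$ that contains $v$. It therefore suffices to enumerate the length-$r$ cycles of $\mathcal{G}_\ell$ through $v$ and show there are only two.

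Next I would invoke Theorem \ref{CycleBijectionThm} together with Lemma \ref{CyclesInGOdellLengthrLemma}. Any cycle of length $r$ through $v$ lifts, under the bijection, to a directed rim of size $r$ through some lift $(v,\iota)$ in $\mathcal{G}_{\mathcal{O},\ell}$ with $\mathcal{O}\in\mathcal{I}_{\ell,r}$, and two directed rims yield the same cycle in $\mathcal{G}_\ell$ if and only if they differ by conjugation of the orientation. In particular $v\in\Ell_{\mathcal{I}_{\ell,r}}^p$, so Lemma \ref{KanekoLemma} applies: since $p>M_{\ell,r}$, there is a unique $\mathcal{O}\in\mathcal{I}_{\ell,r}$ optimally embedded in $\End(v)$, and the image of the optimal embedding in $\End(v)$ is unique. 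Consequently, up to complex conjugation on $K$ there is a unique primitive $\mathcal{O}$-orientation $\iota$ on $v$, and the only lifts of $v$ in $\coprod_K \mathcal{G}_{K,\ell}$ lying on a size-$r$ rim are $(v,\iota)$ and $(v,\bar\iota)$.

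At each of $(v,\iota)$ and $(v,\bar\iota)$ there are two directed rims, corresponding to traversal by $[\mathfrak{l}]$ and by $[\bar{\mathfrak{l}}]$, for a total of four directed rims. Using the identity $\bar\iota(\mathfrak{l})=\iota(\bar{\mathfrak{l}})$ together with the kernel description in Definition \ref{OrientedActionDef}, the $[\mathfrak{l}]$-rim at $(v,\iota)$ and the $[\bar{\mathfrak{l}}]$-rim at $(v,\bar\iota)$ traverse the same sequence of underlying $j$-invariants and are mutual conjugates, while the remaining two directed rims are conjugates that traverse those $j$-invariants in the opposite order. Under conjugation of orientation, the four directed rims thus collapse to exactly two equivalence classes, whose images in $\mathcal{G}_\ell$ are $C$ and $\hat C$. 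Combined with the opening paragraph, this yields $\pi_p(C)\in\{C,\hat C\}$.

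The main obstacle is the bookkeeping in the last step: verifying carefully that conjugation of orientation interchanges the $[\mathfrak{l}]$- and $[\bar{\mathfrak{l}}]$-directions as described and correctly pairs the four directed rims into two classes, and handling the potential ambiguity at the vertices $j=0$ and $j=1728$ by the safe arbitrary assignment of Remark \ref{SafeArbitraryAssignmentRmk}, so that ``$\hat C$'' is a well-defined single cycle accounting for the second equivalence class.
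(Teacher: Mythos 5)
Your proposal is correct and follows essentially the same route as the paper: use that $\pi_p$ is a graph automorphism fixing $v$, so $\pi_p(C)$ is another length-$r$ cycle through $v$, then invoke Theorem \ref{CycleBijectionThm} and the uniqueness (up to conjugation) of the optimal embedding from Lemma \ref{KanekoLemma} to conclude there are only two such cycles, namely $C$ and $\hat C$. Your final accounting of the four directed rims collapsing to two conjugation classes is in fact more explicit than the paper's one-line assertion that the two cycles ``correspond to dual endomorphisms'' and hence are opposite.
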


\begin{proof}
Since $v$ is defined over $\FF_p$, we have that $\pi_p(v) = v$. Further, we know that $\pi_p$ is a graph isomorphism, and so $\pi_p$ takes $C$ to a cycle of length $r$ through $v$. By Theorem \ref{CycleBijectionThm}, every such cycle corresponds to an optimal embedding of some $\mathcal{O} \in \mathcal{I}_{\ell, r}$ into $\End(v)$. Lemma \ref{KanekoLemma} tells us that, up to conjugation, there is only one such optimal embedding. Thus there are only two cycles of length $r$ through $C$, and they correspond to dual endomorphisms.   The two cycles must therefore be opposite, which proves the $\pi_p(C) \in \{C, \hat{C}\}$.
\end{proof}

We can now prove that each $r$-cycle has at most one $\FF_p$-vertex. 

\begin{proposition}\label{FpvertexOrientationSymmetryOddLengthProp}
Let $r$ be odd, and $p > M_{\ell, r}$. Then the number of $\FF_p$-vertices on each $r$-cycle in $\mathcal{G}_{\mathcal{O}, \ell}$ is either $0$ or $1$. The same result also holds for $r$-cycles in $\mathcal{G}_\ell$.
\end{proposition}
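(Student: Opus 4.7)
The plan is to reduce to the image cycle in $\mathcal{G}_\ell$, dichotomize via Lemma \ref{FrobActionOnCyclesLemma}, and exclude the ``bad'' case using the class-group action on the spine. Since the $\FF_p$-vertices of a cycle $C \subset \mathcal{G}_{\mathcal{O}, \ell}$ are in bijection (via the $j$-invariant map) with the $\FF_p$-vertices of its image cycle $C' \subset \mathcal{G}_\ell$, it suffices to prove the statement for $C'$. I may assume $C'$ contains at least one $\FF_p$-vertex (otherwise the count is $0$), so Lemma \ref{FrobActionOnCyclesLemma} gives $\pi_p(C') \in \{C', \hat{C'}\}$.

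In the case $\pi_p(C') = \hat{C'}$, the Frobenius restricts to an orientation-reversing permutation of the vertex set of $C'$ of order at most two. Labeling the vertices cyclically, any such map has the form $v_i \mapsto v_{c-i}$, whose fixed points are the solutions to $2i \equiv c \pmod{r}$; for odd $r$ this yields exactly one fixed index, which must be the unique $\FF_p$-vertex of $C'$.

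The remaining case $\pi_p(C') = C'$ must be ruled out. Were it to hold, $\pi_p$ would restrict to a cyclic rotation of $C'$ of order at most two; on an odd cycle this rotation is trivial, so every vertex of $C'$ would be $\FF_p$. Lifting to $\mathcal{G}_{\mathcal{O}, \ell}$ via Theorem \ref{CycleBijectionThm}, all $r$ vertices of the lift $\tilde C$ would be $\FF_p$-vertices. Since $\tilde C$ is a single $[\mathfrak{l}]$-orbit by Lemma \ref{CyclesInGOdellLengthrLemma}, picking any $v \in \tilde C$ the vertex $[\mathfrak{l}] \ast v$ is also an $\FF_p$-vertex. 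Theorem \ref{FpVertexOrientationSymmetry} then produces $[\mathfrak{a}] \in \cl(\mathcal{O})[2]$ and $k \in \{0,1\}$ with $[\mathfrak{l}] \ast v = ([\mathfrak{a}], \pi_p^k) \ast v$, while Proposition \ref{OrientedActionProp} restricts the stabilizer of $v$ in $\cl(\mathcal{O}) \times \langle \pi_p \rangle$ to $\{1\}$ or $\langle \pi_p \rangle$. A short case check then forces $[\mathfrak{l}] = [\mathfrak{a}] \in \cl(\mathcal{O})[2]$, contradicting the fact that $[\mathfrak{l}]$ has odd order $r \geq 3$ in $\cl(\mathcal{O})$.

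The main obstacle is precisely this final stabilizer computation: it is what ``sees'' the parity of $r$ and kills the hypothetical spine-saturated cycle, so making the interaction between the transitivity of $\cl(\mathcal{O})[2] \times \langle \pi_p \rangle$ (Theorem \ref{FpVertexOrientationSymmetry}) and the stabilizer constraint (Proposition \ref{OrientedActionProp}) precise is the heart of the argument. The rest is clean case analysis of an involution acting on an odd-length cycle.
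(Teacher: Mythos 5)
Your argument is genuinely different from the paper's and most of it is sound, so let me first compare. The paper also reduces to a cycle $C$ in $\mathcal{G}_{\mathcal{O},\ell}$ meeting the spine and uses Lemma \ref{FrobActionOnCyclesLemma}, but only to conclude that Frobenius stabilizes the set of $j$-invariants of $C$, whence the non-$\FF_p$ ones pair up and the number of $\FF_p$-vertices on $C$ is \emph{odd}. It then counts globally: by Theorem \ref{ChenXueRootCountThm} and the inertness of $p$ there are exactly $2^{\mu}$ $\FF_p$-vertices in all of $\mathcal{G}_{\mathcal{O},\ell}$, by Corollary \ref{FpVertexOrientationSymmetryCorollary} every spine-meeting cycle carries the same number of them, and an odd number dividing $2^{\mu}$ must be $1$. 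You instead split on the dichotomy $\pi_p(C)\in\{C,\hat{C}\}$: in the reflection case you count fixed points of an involution on an odd cycle, and in the rotation case you derive a contradiction by showing $[\mathfrak{l}]$ would carry an $\FF_p$-vertex to an $\FF_p$-vertex and hence, by the transitivity of $\cl(\mathcal{O})[2]\times\langle\pi_p\rangle$ together with the stabilizer constraint of Proposition \ref{OrientedActionProp}, would lie in $\cl(\mathcal{O})[2]$, impossible for an element of odd order $r>2$. That second half is a nice local argument that replaces the paper's global $2^{\mu}$ count, and it isolates exactly where the oddness of $r$ enters.

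There is, however, one step you need to shore up: both the claim that the reflection $v_i\mapsto v_{c-i}$ has \emph{exactly} one $\FF_p$-vertex among its non-fixed indices excluded, and your opening claim that the $\FF_p$-vertices of $C$ biject with those of its image in $\mathcal{G}_\ell$, presuppose that the $r$ positions of the cycle carry pairwise distinct $j$-invariants. If $j_i^p=j_{c-i}$ but the walk repeats a vertex, then $j_i=j_{c-i}$ (hence $j_i\in\FF_p$) can occur at an index with $i\not\equiv c-i\pmod r$, and your count of one fixed position no longer bounds the number of $\FF_p$-vertices. The paper establishes this kind of injectivity only in Lemma \ref{EvenFpVertexDistPropLemma}, and there under the stronger hypothesis $p>M_{\ell,r}^{\mathrm{strong}}$; under the hypothesis $p>M_{\ell,r}$ of the present proposition it is not available off the shelf. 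You would either need to prove distinctness of the vertices of an $r$-cycle for $p>M_{\ell,r}$ (e.g.\ by the endomorphism-degree argument of that lemma, adapted to odd $r$), or restructure the reflection case along the lines of the paper's parity argument, which only uses that the strictly-$\FF_{p^2}$ $j$-invariants pair up under Frobenius and avoids the need for a clean reflection on distinct labels.
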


\begin{proof}
Let $C$ be a cycle containing at least one $\FF_p$-vertex in $\mathcal{G}_{\mathcal{O}, \ell}$, $\tilde{C}$ be the cycle in $\mathcal{G}_\ell$ obtained by forgetting orientations, and $\tilde{C}^p$ be the set of $\FF_p$-$j$-invariants appearing as vertices in $\tilde{C}$. We define $C^p$ similarly. By Lemma \ref{CyclesInGOdellLengthrLemma} we know that the length of $C$ is $r$. We will show that $\# C^p$ is odd. \\

By Lemma \ref{FrobActionOnCyclesLemma}, we have that Frobenius fixes the vertex set of $\tilde{C}$, and therefore also fixes the set of $j$-invariants that appear in vertices of $C$. There is therefore an even number of $\FF_{p^2}$-vertices in $C$ and in $\tilde{C}$. Since there is an odd number of total vertices in $C$, we see that there is an odd number of $\FF_p$-vertices in $C$. \\ 

We now show that our assumption on $p$ guarantees there are a $2$-power number of $\FF_p$-vertices in $\mathcal{G}_{\mathcal{O}, \ell}$. By Theorem \ref{ChenXueRootCountThm}, the number of $\FF_p$-$j$-invariants obtained by forgetting orientations from $\mathcal{G}_{\mathcal{O}, \ell}$ is $2^{\mu - 1}$. We can therefore count $\FF_p$-vertices in $\mathcal{G}_{\mathcal{O},\ell}$ by counting how many distinct orientations we obtain on each $j$-invariant. By the argument in Lemma \ref{KanekoLemma}, our assumption that $p > M_{\ell, r}$ implies that $p > |d|$. Thus $p$ cannot ramify in $\mathcal{O}$, and so we must have that $p$ is inert in $\mathcal{O}$. By \cite[Proposition 4.2]{Arpin+22}, this implies that for any orientation $\iota$ of $E$, $(E, \iota)$ and $(E, \overline{\iota})$ are non-isomorphic as oriented supersingular curves. As argued in Lemma \ref{KanekoLemma}, the assumption that $p > M_{\ell, r}$ also guarantees that there is only one embedding of $\mathcal{O}$ into $\End(E)$, so we see that $j(E)$ appears exactly twice in $\mathcal{G}_{\mathcal{O}, \ell}$. Thus the number of $\FF_p$-vertices in $\mathcal{G}_{\mathcal{O}, \ell}$ is $2(2^{\mu - 1}) = 2^\mu$. \\

By Corollary \ref{FpVertexOrientationSymmetryCorollary}, each cycle with at least one $\FF_p$-vertex in $\mathcal{G}_{\mathcal{O}, \ell}$ has the same number of $\FF_p$-vertices. This number is odd by the argument in the second paragraph. The total number of $\FF_p$-vertices in $\mathcal{G}_{\mathcal{O}, \ell}$ is $2^\mu$ by the arguments above. Letting $2k + 1$ be the number $\FF_p$-vertices on any cycle in $\mathcal{G}_{\mathcal{O}, \ell}$ with at least one such vertex, we see that $\#\{\text{cycles in $\mathcal{G}_{\mathcal{O}, \ell}$ with an $\FF_p$-vertex}\}(2k + 1) = 2^\mu$. It follows that $k = 0$, and so there is either $0$ or $1$ such vertices on each cycle. Thus $\# C^p \in \{0,1\}$. \\

Finally, we note that this implies $\# \tilde{C} \in \{0,1\}$ as well, since the sets of $j$-invariants appearing as vertices in $C$ and $\tilde{C}$ are the same.   
\end{proof}

\section{Limiting Distribution of cycles on and off the spine}\label{SecLimitingDistribution}

By the results of the previous sections, we know that for sufficiently large primes $p$, all of the following hold:

\begin{enumerate}
\item $\mathcal{I}_{\ell, r}$ is a finite list of imaginary quadratic orders $\mathcal{O}$ such that every cycle of length $r$ in $\mathcal{G}_{\ell}$ consists of vertices whose $j$-invariants satisfy the reduction modulo $p$ of the Hilbert class polynomial for some $\mathcal{O} \in \mathcal{I}_{\ell, r}$.
 
\item For each order $\mathcal{O} \in \mathcal{I}_{\ell, r}$, the number of cycles obtained from the reductions of curves with CM by $\mathcal{O}$ is either $0$ if $p$ splits in $\mathcal{O}$, or $2h(\mathcal{O})/r$ otherwise.

\item The number of cycles obtained from the reduction of curves with CM by $\mathcal{O}$ that lie along the spine is either $0$, or $2^{\mu}$. By Theorem \ref{ChenXueRootCountThm}, whether there are $0$ or $2^\mu$ cycles along the spine depends only on the discriminant, and the value of $p$ modulo $8$.
\end{enumerate}

Together, these allow us to prove the first Theorem of the introduction:

\begin{theorem}\label{EventualDistributionThm}
Let $\ell$ and $r$ be fixed, with $r$ odd. Then there exists a modulus $M$ depending on $\ell$ and $r$ such that for $p \gg 0$, $n_t$ and $n_s$ depend only on $p$ modulo $M$.
\end{theorem}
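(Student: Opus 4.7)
The plan is to combine the three bulleted facts listed immediately before the theorem and translate each of the conditions appearing in them into a congruence condition on $p$. First I would assume $p > M_{\ell, r}$, so that Lemma~\ref{KanekoLemma} guarantees the reduction map on $\Ell_{\mathcal{I}_{\ell,r}}$ is injective and so that every $\tilde{E} \in \Ell_{\mathcal{I}_{\ell,r}}^p$ admits a unique (up to conjugation) optimal embedding of a unique order $\mathcal{O} \in \mathcal{I}_{\ell, r}$. Combined with Theorem~\ref{CycleBijectionThm}, this partitions the $r$-cycles of $\mathcal{G}_\ell$ by the order $\mathcal{O}$ that produces them, so that
$$n_t = \sum_{\mathcal{O} \in \mathcal{I}_{\ell, r}} n_t(\mathcal{O}), \qquad n_s = \sum_{\mathcal{O} \in \mathcal{I}_{\ell, r}} n_s(\mathcal{O}),$$
where the summands count, respectively, the $r$-cycles and the spine-intersecting $r$-cycles arising from reductions of curves with CM by the given order.

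Next I would evaluate each summand using the bulleted facts: $n_t(\mathcal{O}) = 2h(\mathcal{O})/r$ when $p$ does not split in $\mathcal{O}$ (and is $0$ when $p$ splits, since in that case the CM curves reduce to ordinary curves), while Proposition~\ref{FpvertexOrientationSymmetryOddLengthProp} together with Theorem~\ref{ChenXueRootCountThm} gives $n_s(\mathcal{O}) = 2^{\mu(\mathcal{O})}$ when $\widetilde{\mathcal{H}_\mathcal{O}}(x)$ has $\FF_p$-roots and $0$ otherwise. The final step is to observe that each of these dichotomies is governed by a congruence condition on $p$: once $p$ exceeds all conductors appearing in $\mathcal{I}_{\ell,r}$, the splitting of $p$ in $\mathcal{O}$ is determined by the Kronecker symbol $\left(\tfrac{\disc(\mathcal{O}_K)}{p}\right)$, which by quadratic reciprocity depends only on $p$ modulo $4|\disc(\mathcal{O}_K)|$; and the conditions in Theorem~\ref{ChenXueRootCountThm} involve only Legendre symbols $\left(\tfrac{-p}{q}\right)$ for odd primes $q \mid \disc(\mathcal{O})$ and the residue of $p$ modulo $8$. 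Since $\mathcal{I}_{\ell, r}$ is finite, taking $M$ to be the least common multiple of all these moduli over $\mathcal{O} \in \mathcal{I}_{\ell, r}$ yields a single integer $M$ such that both $n_t$ and $n_s$ depend only on $p \bmod M$.

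The main delicate point is the decomposition in the first step: it relies crucially on Lemma~\ref{KanekoLemma} to rule out the possibility that a single supersingular curve could receive two distinct optimal embeddings from $\mathcal{I}_{\ell, r}$, which would produce overcounting when one passes from a sum over $\mathcal{O}$ to the total $n_t$. Once $p > M_{\ell, r}$ this is excluded, and all remaining bookkeeping reduces to standard quadratic-reciprocity arguments. The assumption ``$p \gg 0$'' in the statement absorbs the finitely many small primes that could divide a conductor or a discriminant, so those cases never obstruct the conclusion.
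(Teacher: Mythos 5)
Your proposal is correct and follows essentially the same route as the paper: decompose the $r$-cycle count by the order $\mathcal{O} \in \mathcal{I}_{\ell,r}$ producing it (with Lemma \ref{KanekoLemma} ensuring the contributions are disjoint for $p > M_{\ell,r}$), evaluate each contribution as $0$ or $2h(\mathcal{O})/r$ (resp.\ $0$ or $2^{\mu}$ on the spine), and observe via quadratic reciprocity and Theorem \ref{ChenXueRootCountThm} that each dichotomy is a congruence condition on $p$, so that $M$ may be taken to be the least common multiple of the resulting moduli.
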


\begin{proof}
We will see that for sufficiently large $p$, the number of $r$-cycles contributed by each order $\mathcal{O} \in \mathcal{I}_{\ell, r}$, both in total, and along the spine, depends only on congruence conditions on $p$. \\

Specifically, $\mathcal{O}$ contributes no $r$-cycles if $p$ splits in $\mathcal{O}$, which is equivalent to $d$ being a quadratic residue modulo $p$. By quadratic reciprocity this is equivalent to certain congruence conditions on $p$ modulo $d$ or $4d$. On the other hand, if $p$ does not split in $\mathcal{O}$, then $\mathcal{O}$ gives $2^{\mu}$ or $0$ cycles along the spine according to $p$ modulo $8$ and congruence conditions on $p$ modulo $q$ for $q \mid d$, as in Theorem \ref{ChenXueRootCountThm}, and gives $2h(\mathcal{O})/r$ cycles in total. \\

For each discriminant $d \in d(\mathcal{I}_{\ell, r})$, we therefore have three sets of congruences $S_{d,1}, S_{d,2}, S_{d,3} \subseteq \ZZ / 8d \ZZ$ with the following properties: if $[p] \in S_{d,1}$ then there are no cycles in $\mathcal{G}_\ell$ obtained from reduction of $\mathcal{O}$; if $[p] \in S_{d,2}$ then there are $2h(\mathcal{O}) / r$ cycles in $\mathcal{G}_\ell$ obtained from reduction of $\mathcal{O}$, none of which lie along the spine; and if $[p] \in S_{d,3}$, then there $2h(\mathcal{O}) / r$ cycles in $\mathcal{G}_\ell$ obtained from reduction of $\mathcal{O}$, with $2^{\mu}$ of them lying along the spine. For $p > M_{\ell, r}$, we have that the cycles obtained by $\mathcal{O}_1$ and $\mathcal{O}_2 \neq \mathcal{O}_1$ are distinct by Lemma \ref{KanekoLemma}, so we can obtain the exact count of cycles both in total, and along the spine from the value of $p$ modulo $\operatorname{lcm}(8, d_1, \hdots, d_k)$, where the $\{d_1, \hdots, d_k\} = d(\mathcal{I}_{\ell, r})$. 
\end{proof}

We are now in a position to prove the following Corollary as well:

\begin{corollary}\label{EventualDistributionCor}
Let $\ell$ and $r$ be fixed, with $r$ odd, and $M$ be the modulus obtained in the previous Theorem. Then for any $[m] \in \ZZ / M \ZZ$, one of the following holds for all sufficiently large $p \in [m]$:
\begin{enumerate}
\item $n_t/\#V(\mathcal{G}_{p,\ell}) < n_s/\#V(\mathcal{S}_{p,\ell})$;
\item $n_s = 0$.
\end{enumerate}
\end{corollary}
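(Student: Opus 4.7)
The plan is a short asymptotic comparison, reducing the statement to the fact that the spine is vanishingly small compared to the full graph. By Theorem \ref{EventualDistributionThm}, for $p$ sufficiently large in the residue class $[m] \in \ZZ / M \ZZ$, the quantities $n_t$ and $n_s$ are independent of $p$; call them $N_t$ and $N_s$, and note $N_t \geq N_s$ by definition. First I would handle the easy case: if $N_s = 0$, then $n_s = 0$ for all large $p \in [m]$ and we are in case (2). So from here on assume $N_s \geq 1$, in which case $N_t \geq N_s \geq 1$.

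The desired inequality $n_t / \#V(\mathcal{G}_{p,\ell}) < n_s / \#V(\mathcal{S}_{p,\ell})$ rearranges, using that all four quantities are positive, to
\[
\frac{\#V(\mathcal{S}_{p,\ell})}{\#V(\mathcal{G}_{p,\ell})} < \frac{N_s}{N_t}.
\]
Since the right-hand side is a fixed positive rational, it suffices to prove that the ratio on the left tends to $0$ as $p \to \infty$ through $[m]$.

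The denominator is $\#V(\mathcal{G}_{p,\ell}) = \lfloor (p-1)/12 \rfloor + \epsilon$, which grows linearly in $p$ by the mass formula recalled in the introduction. For the numerator, I would cite the classical Deuring correspondence: the $\FF_p$-rational supersingular $j$-invariants are exactly the reductions modulo $p$ of CM $j$-invariants of orders inside $\QQ(\sqrt{-p})$, and the total count is essentially $h(-p) + h(-4p)$, which is $O(\sqrt{p}\log p)$ by classical class number bounds. Alternatively, $\#V(\mathcal{S}_{p,\ell})$ is bounded by the sum of the degrees of $\widetilde{\mathcal{H}_{\mathcal{O}_{-p}}}$ and $\widetilde{\mathcal{H}_{\mathcal{O}_{-4p}}}$ modulo $p$, giving the same order. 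Either way, $\#V(\mathcal{S}_{p,\ell}) = o(p)$, so the ratio $\#V(\mathcal{S}_{p,\ell})/\#V(\mathcal{G}_{p,\ell})$ tends to $0$ and eventually falls below the positive constant $N_s / N_t$, yielding case (1).

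The argument itself is a one-line asymptotic comparison once Theorem \ref{EventualDistributionThm} pins $n_t$ and $n_s$ as eventually constant. The only mild technical point is quoting a sufficient upper bound on $\#V(\mathcal{S}_{p,\ell})$; any bound of the form $o(p)$ is enough and these are standard in the isogeny-graph literature, so I do not expect this to be a real obstacle.
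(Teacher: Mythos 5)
Your proposal is correct and follows essentially the same route as the paper: both reduce to the eventual constancy of $n_t$ and $n_s$ from Theorem \ref{EventualDistributionThm} together with the comparison $\#V(\mathcal{S}_{p,\ell}) = O(\sqrt{p}\log p)$ (via the class numbers $h(-p)$, $h(-4p)$) against $\#V(\mathcal{G}_{p,\ell}) = O(p)$. Your version is in fact slightly more direct, since the paper detours through counting vertices contained in $r$-cycles (using that each spine cycle meets the spine in exactly one vertex) before making the same asymptotic comparison.
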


\begin{proof}
In \cite{Arpin+19}, the authors give the number of spine vertices as a constant multiple of either $h(\mathcal{O}_{-4p})$ or $h(\mathcal{O}_{-p})$, depending on $p$ modulo $8$. It is a standard bound in analytic number theory that $h(\mathcal{O}_d) \leq C\sqrt{|d|}\log(|d|)$ for imaginary quadratic discriminants $d$, and a constant $C$ \cite[Equation 8.11]{JacobsonWilliams09}. Letting $s$ be the number of spine vertices we therefore have that $s = O(\sqrt{p}\log{p})$. On the other hand, the number of vertices in $\mathcal{G}_{p, \ell}$ is $O(p)$. \\

Let $n_t$ and $n_s$ be as in the Corollary statement. Using the notation from the proof of Theorem \ref{EventualDistributionThm}, we see that if $[m] \in S_{d,1}$ or $[m] \in S_{d,2}$ for all $d \in d(\mathcal{I}_{\ell, r})$, then $n_s = 0$ for all sufficiently large $p \in [m]$. Otherwise, we have that $n_s, n_t > 0$. \\

Proposition \ref{FpvertexOrientationSymmetryOddLengthProp}
says that each cycle along the spine contains $1$ spine vertex. Further, for $p > M_{\ell, r}$, all of the $r$-cycles in $\mathcal{G}_{p,\ell}$ are disjoint, unless the two cycles are the same up to direction of traversal. We therefore see that, for $p$ sufficiently large, each spine vertex contained in an $r$-cycle is contained in exactly two $r$-cycles. Thus there are $n_s/2$ spine vertices that are part of an $r$-cycle. Each $r$-cycle contains $r$-total vertices, and as argued above, for sufficiently large $p$, the $r$-cycles are all disjoint, except for a cycle and its opposite. Thus there are $rn_t/2$ vertices contained in $r$-cycles. We therefore have that for $p \equiv m \pmod{M}$, the proportion of spine vertices contained in an $r$-cycle is $n_s / O(\sqrt{p}\log(p))$, while the proportion of all vertices contained in an $r$-cycles is $n_t / O(p)$. Since $O(p)$ is asymptotically greater than $O(\sqrt{p}\log(p))$, we see that the proportion of spine vertices with an $r$-cycle is eventually greater than the proportion of all vertices. 
\end{proof}

\section{Explicit examples}\label{SecExamples}

In this section, we present explicit examples illustrating the key ideas in Theorem \ref{EventualDistributionThm} and Corollary \ref{EventualDistributionCor}. We consider the distribution of 3 cycles in the 3-isogeny graph. \\

\begin{table}
\begin{tabularx}{0.95\textwidth}{|l|l|l|X|}\hline
discriminant & class number & $\left(\frac{d}{4643}\right)$ & $\mathbb{F}_{4643^2}$ $j$-invariants \\ \hline
-23 & 3 & -1 & $173, 1283 z_{2} + 3319, 3360 z_{2} + 937$ \\ \hline
-44 & 3 & 1 &   \\ \hline
-59 & 3 & 1 & $1896, 1161, 851$ \\ \hline
-83 & 3 & 1 & $3690, 2630, 1486$ \\ \hline
-92 & 3 & -1 & $4537, 732 z_{2} + 4024, 3911 z_{2} + 1326$ \\ \hline
-104 & 6 & -1 & $2549 z_{2} + 3241, 3268 z_{2} + 3162, 3037 z_{2} + 3085, 2094 z_{2} + 2967, 1606 z_{2} + 2560, 1375 z_{2} + 73$ \\ \hline
-107 & 3 & 1 & $3870, 1520, 637$ \\ \hline
\end{tabularx}
\caption{\label{ThreeThreeTable}Discriminants in $\mathcal{I}_{3,3}$, and corresponding data for the prime $p = 4643$. The variable $z_2$ is a root of the polynomial $x^2 - 9x - 4638 \in \FF_{4643}[x]$.}
\end{table}

Referring to Table \ref{ThreeThreeTable}, we see that $\mathcal{I}_{3,3} =  \{-23, -44, -59, -83, -92, -104, -107\}$. For $p > \frac{(-104)(-107)}{4}$, we have that the reductions of the curves with CM by these orders are all distinct. In these cases we can count the number of cycles in $\mathcal{G}_\ell$ that intersect $\mathcal{S}_{\ell}$ by counting the number resulting from each order separately. We will assume that $p$ is sufficiently large for the rest of this example. \\

Consider the order $\mathcal{O}$ of discriminant $-23$. Reductions of curves with CM by $\mathcal{O}$ are supersingular if and only if $\left(\frac{-23}{p}\right) = -1$. By quadratic reciprocity, this occurs if and only if $$p \equiv 3,
 5,
 17,
 21,
 23,
 27,
 31,
 33,
 35,
 37,
 39,
 45,
 47,
 53,
 55,
 57,
 59,
 61,
 65,
 71,
 75,
 87,
 89 \pmod{92}.$$ If $p$ is in one of these residue classes, then the reductions of the curves with CM by $\mathcal{O}$ form two directed $3$-cycles in the $3$-isogeny graph. Further, $\left(\frac{-p}{23}\right) = 1$ in each of these cases, so Theorem \ref{ChenXueRootCountThm} tells us that these $3$-cycles intersects the spine. \\
 
Next, we analyze the case of $d = -104$, where more complicated scenarios can occur. We find again by quadratic reciprocity, that the curves with CM by $\mathcal{O}_{-104}$ reduce to supersingular curves if and only if 
\small \begin{equation}\label{104SSresidues}
p \equiv  11,
 19,
 23,
 29,
 33,
 41,
 53,
 55,
 57,
 59,
 61,
 67,
 69,
 73,
 77,
 79,
 83,
 87,
 89,
 95,
 97,
 99,
 101,
 103 \pmod{104}.
\end{equation}  \normalsize
The class group of $\mathcal{O}_{-104}$ is isomorphic to $\ZZ / 6 \ZZ$, so that for $p > 104$, there are either $0$ or $2$ $\FF_p$-roots of the Hilbert Class Polynomial. By Theorem \ref{ChenXueRootCountThm}, there are $2$ $\FF_p$-roots as long as 
 \begin{enumerate}
 \item $\left(\frac{-p}{13}\right) = 1$, and 
 \item at least one of the following holds:
 \begin{enumerate}
 \item $p \equiv 7 \pmod{8}$,
 \item $-p - 26 \equiv 0, 1, \text{or } 4 \pmod{8}$, or,
 \item $-p - 104 \equiv 1 \pmod{8}$.
 \end{enumerate}
 \end{enumerate}
Simplifying these conditions shows that $p$ must be in the following residue classes: \begin{equation}\label{104Fpresidues} p \equiv 23, 29, 53, 55, 61, 69, 77, 79, 87, 95, 101, 103 \pmod{104}. \end{equation} Thus the discriminant $\mathcal{O}_{-104}$ produces $4$ directed $3$-cycles in $\mathcal{G}_{p,3}$ if and only if $p$ is in one of the residues listed in \eqref{104SSresidues}, and these four $3$-cycles lie along the spine if and only if $p$ is in one of the residues listed in \eqref{104Fpresidues}. \\

The analyses of the remaining discriminants are similar. For our purposes, it suffices to note that in each case, we can give an explicit list of residues modulo some $m_i$ where the order produces supersingular curves, and further, when the class number is even we can give an explicit subset of these residues where there are $2^{\mu}$ reduced curves along the spine. Considering each residue class in $(\ZZ / \operatorname{lcm}(m_i) \ZZ)^\times$, we can then count the number of $3$-cycles in the graph $\mathcal{G}_{p,3}$, as well as the number of $3$-cycles along the spine. By explicit computation, we see that in this case, $\operatorname{lcm}(m_i) = 13,786,935,448$, so that for sufficiently large primes $p$, the number of $3$-cycles in total and along the spine in the $3$-isogeny graph depends only on $p$ modulo $13,786,935,448$. \\

For example, consider $p = 4643$. Table \ref{ThreeThreeTable} shows that in this case only the orders $\mathcal{O}_{-23}$, $\mathcal{O}_{-92}$, and $\mathcal{O}_{-104}$ produce supersingular $j$-invariants, and further we have that none of the $3$-cycles coming from $\mathcal{O}_{-104}$ lie along the spine. We therefore have that $1/2$ of the $3$-cycles lie along the spine and $1/2$ lie off of the spine. Since this ratio depends only on $p$ modulo $13,786,935,448$, we see that for large enough $p$ equivalent to $4643$ modulo $13,786,935,448$, $1/2$ of the three cycles in $\mathcal{G}_{p,3}$ lie along spine, while the proportion of vertices that are on the spine approaches $0$.

\section{Expected values for $n_s$}\label{SecExpectedValues}

In this section, we give an explicit formula for $n_s$, as well as for the expected value of $n_s$ as $p \to \infty$. These formulae are given in terms of imaginary quadratic class numbers and discriminants, but without conditions on the order or splitting behavior of $\ell$ in the class group. \\

We first recall a convenient description for the set $\cup_{n \mid r} d(\mathcal{I}_{\ell,n})$ given in the proof of Theorem 7.4 in \cite{Arpin+22}:

\begin{proposition}\label{DiscriminantDescriptionProp}
The set of imaginary quadratic discriminants $\Delta$ where $\ell\mathcal{O}_\Delta$ splits, the primes above $\ell$ have order dividing $r$, and the conductor is not divisible by $\ell$, is given by $$\left\{\frac{x^2 - 4\ell^r}{f^2} : 0 < x < 2\ell^{r/2},\ x \not\equiv 0 \pmod{\ell},\ f^2 \mid x^2 - 4\ell^r, \text{ and } \frac{x^2 - 4\ell^r}{f^2} \equiv 0,1 \pmod{4} \right\}.$$ We will denote this by $d(\mathcal{I}_{\ell, n \mid r})$.
\end{proposition}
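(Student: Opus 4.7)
The plan is to prove the set equality by establishing both inclusions, with the bridge in both directions being an element $\alpha \in \mathcal{O}_\Delta$ satisfying $T^2 - xT + \ell^r$. One direction extracts the pair $(x, f)$ from a chosen generator of $\mathfrak{l}^r$; the other direction builds the ideal $\mathfrak{l}$ from a presumptive $\alpha$ and verifies the splitting and torsion conditions.

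For the forward inclusion, suppose $\Delta$ satisfies the three conditions on the left. Because $\ell \nmid f_\Delta$, the prime $\mathfrak{l}$ above $\ell$ is invertible, so $[\mathfrak{l}]^r = 1$ yields $\mathfrak{l}^r = (\alpha)$ for some $\alpha \in \mathcal{O}_\Delta$. Taking norms gives $N(\alpha) = \ell^r$, and setting $x := \operatorname{tr}(\alpha)$ (after flipping $\alpha$'s sign to make $x > 0$) shows $\alpha$ is a root of $T^2 - xT + \ell^r$. Since $\mathfrak{l} \neq \overline{\mathfrak{l}}$ the element $\alpha$ is non-rational, so $x^2 - 4\ell^r < 0$, giving $0 < x < 2\ell^{r/2}$. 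Setting $f := [\mathcal{O}_\Delta : \mathbb{Z}[\alpha]]$ and comparing discriminants yields $\Delta = (x^2 - 4\ell^r)/f^2$. The final condition $\ell \nmid x$ follows by contradiction: if $\ell \mid x$ then $\alpha^2 = x\alpha - \ell^r \in \ell\mathcal{O}_\Delta$, hence $(\ell) = \mathfrak{l}\overline{\mathfrak{l}}$ divides $(\alpha)^2 = \mathfrak{l}^{2r}$, forcing $\overline{\mathfrak{l}} = \mathfrak{l}$.

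For the reverse inclusion, given $(x, f)$ as in the right-hand set, define $\Delta := (x^2 - 4\ell^r)/f^2$ and let $\alpha$ be a root of $T^2 - xT + \ell^r$; the equality $\disc(\mathbb{Z}[\alpha]) = f^2\Delta$ places $\mathbb{Z}[\alpha]$ inside $\mathcal{O}_\Delta$ with index $f$, so $\alpha \in \mathcal{O}_\Delta$. Set $\mathfrak{l} := (\ell, \alpha)$ and $\overline{\mathfrak{l}} := (\ell, \overline{\alpha})$. A direct expansion gives $\mathfrak{l}\overline{\mathfrak{l}} = (\ell^2, \ell\alpha, \ell\overline{\alpha}, \ell^r) = (\ell)(\ell, \alpha, \overline{\alpha}, \ell^{r-1})$, and since $\alpha + \overline{\alpha} = x$ with $\gcd(x, \ell) = 1$ we get $1 \in (\ell, x) \subseteq (\ell, \alpha, \overline{\alpha})$, so $\mathfrak{l}\overline{\mathfrak{l}} = (\ell)$. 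The same coprimality gives $\mathfrak{l} \neq \overline{\mathfrak{l}}$, so $\ell$ splits. To see $\ell \nmid f_\Delta$: writing $\alpha = a + f_\Delta \beta$ with $a \in \mathbb{Z}$, $\beta \in \mathcal{O}_K$, if $\ell \mid f_\Delta$ then reducing modulo $\ell$ gives $\ell^r \equiv a^2$ and $x \equiv 2a$, so $\ell \mid a$ (for $r \geq 1$) and hence $\ell \mid x$, contradicting the hypothesis. Finally, by unique factorization of invertible ideals, $(\alpha) = \mathfrak{l}^a \overline{\mathfrak{l}}^b$ with $a + b = r$; the fact that $\alpha \notin \overline{\mathfrak{l}}$ (else $x \in \overline{\mathfrak{l}} \cap \mathbb{Z} = \ell\mathbb{Z}$) forces $b = 0$, and so $[\mathfrak{l}]$ has order dividing $r$ in $\cl(\mathcal{O}_\Delta)$.

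The main technical obstacle is keeping track of the non-maximality of $\mathcal{O}_\Delta$: unique factorization of ideals holds only for invertible ones, which is precisely what $\ell$-fundamentality ($\ell \nmid f_\Delta$) provides for the primes above $\ell$. The condition $\ell \nmid x$ in the right-hand description is the numerical shadow of this $\ell$-fundamentality, and it is simultaneously the condition that rules out ramification in favor of splitting; getting these two roles to align in both directions is the central bookkeeping issue in the proof.
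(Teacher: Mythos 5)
Your proof is correct, but it takes a genuinely different route from the paper's. The paper disposes of this proposition in two sentences: it cites Theorem 7.4 of \cite{Arpin+22}, which already establishes the parametrization $\Delta = (x^2 - 4\ell^r)/f^2$ for the discriminants producing cycles of length dividing $r$ in $\mathcal{G}_{p,\ell}$ (subject to additional conditions that $x^2-4\ell^r$ be a non-residue of small valuation at $p$), and observes that dropping the $p$-conditions leaves exactly the set in the statement. You instead re-derive the parametrization from scratch: in one direction you extract $(x,f)$ as the trace of a generator of $\mathfrak{l}^r$ and the index $[\mathcal{O}_\Delta : \ZZ[\alpha]]$, and in the other you reconstruct $\mathfrak{l} = (\ell,\alpha)$ from a root of $T^2 - xT + \ell^r$ and verify splitting, $\ell$-fundamentality, and $(\alpha)=\mathfrak{l}^r$ directly. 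Your version buys self-containedness and makes visible exactly where invertibility of ideals in the non-maximal order is needed (your closing remark on $\ell \nmid f_\Delta$ versus $\ell \nmid x$ is the right diagnosis); the paper's version buys brevity at the cost of outsourcing all of this to the reference. Two points you leave implicit but which are harmless: in the forward direction the case $x=0$ is excluded by the same argument you give for $\ell \nmid x$, so it should logically precede the normalization $x>0$; and in the reverse direction the primality of $\mathfrak{l}$ (needed for ``splits'') follows from the norm computation $N(\mathfrak{l})N(\overline{\mathfrak{l}}) = \ell^2$ once $\mathfrak{l}\overline{\mathfrak{l}} = (\ell)$ and $\mathfrak{l}\neq\overline{\mathfrak{l}}$ are established.
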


\begin{proof}
Let $p$, $\ell$, and $r$ be fixed. Theorem 7.4 of \cite{Arpin+22} shows that the set of discriminants corresponding to cycles whose length divides $r$ in $\mathcal{G}_{p, \ell}$ is given by the set of $\frac{x^2 - 4\ell^r}{f^2}$ such that all of the conditions above hold, but also $x^2 - 4\ell^r$ is not a quadratic residue modulo $p$ and has valuation at most $1$ at $p$. The set above simply removes the conditions on $p$.
\end{proof}

We now aim to compute the average number of spine cycles for large $p$. We first use M\"obius inversion to find a formula for this count for a specific $p$. Let $n_{s,p}(r)$ be the number of spine cycles of length $r$ in $\mathcal{G}_{\ell}$.

\begin{theorem}\label{ExplicitNsCountThm}
Let $p, \ell$, and $r$ be fixed, with $p > M_{\ell, r}$. Then
$$n_{s,p}(r) = 2\sum_{d \mid r} \mu(d) \sum_{\Delta \in d\left(\mathcal{I}_{\ell,n \mid \frac{r}{d}}\right)} \delta_p(\Delta)h_2(\Delta),$$ where $h_2(\Delta) = |\cl(\mathcal{O}_\Delta)[2]|$, and $\delta_p(\Delta) = \begin{cases}1 & \parbox[center]{12em}{$p$ is inert in $\mathcal{O}$ and \\ $H_\mathcal{O}(x)$ has a solution in $\FF_p$,} \\ 0 & \text{otherwise.}\end{cases}$
\end{theorem}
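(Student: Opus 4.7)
The plan is to apply Möbius inversion on the divisibility poset and reduce the count to a local question about each discriminant. Set $F_p(r) := \sum_{m \mid r} n_{s,p}(m)$, the number of directed spine cycles in $\mathcal{G}_{p,\ell}$ of length dividing $r$. Standard Möbius inversion then gives $n_{s,p}(r) = \sum_{d \mid r} \mu(d) F_p(r/d)$, so it suffices to prove
\[ F_p(r) = 2 \sum_{\Delta \in d(\mathcal{I}_{\ell, n \mid r})} h_2(\Delta)\, \delta_p(\Delta). \]
By Proposition \ref{DiscriminantDescriptionProp}, $d(\mathcal{I}_{\ell, n \mid r}) = \bigsqcup_{m \mid r} d(\mathcal{I}_{\ell, m})$, and Lemma \ref{KanekoLemma} guarantees that cycles arising from distinct orders are disjoint for $p > M_{\ell,r}$. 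So the theorem reduces to a local claim where I would focus the real work: for each $\Delta \in d(\mathcal{I}_{\ell, m})$ with $m \mid r$, the order $\mathcal{O}_\Delta$ contributes precisely $2h_2(\Delta)\delta_p(\Delta)$ directed spine cycles of length $m$ to $\mathcal{G}_{p,\ell}$.

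To establish the local claim, I would first apply Theorem \ref{ChenXueRootCountThm}, which supplies exactly $h_2(\Delta)\delta_p(\Delta)$ supersingular $\FF_p$-$j$-invariants that are roots of $\widetilde{\mathcal{H}_{\mathcal{O}_\Delta}}$. Here $\delta_p$ is chosen to encode both the inertness of $p$ in $\mathcal{O}_\Delta$ and the residue conditions of that theorem, and the hypothesis $p > M_{\ell,r} \geq |\Delta|$ (as in the proof of Lemma \ref{KanekoLemma}) validates its application. For each such $j(E)$, the inertness of $p$ together with Proposition 4.2 of \cite{Arpin+22} produces two non-isomorphic $\mathcal{O}_\Delta$-oriented curves $(E,\iota)$ and $(E,\overline{\iota})$, which lie on two distinct oriented cycles $C_+$ and $C_-$ in $\mathcal{G}_{\mathcal{O}_\Delta, \ell}$. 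Each of these cycles has length exactly $m$ by Lemma \ref{CyclesInGOdellLengthrLemma}, and applying Proposition \ref{FpvertexOrientationSymmetryOddLengthProp} at length $m$ — invoking Remark \ref{KanekoLemmaRmk} to replace $M_{\ell,r}$ by the maximum across divisors of $r$ — shows each of $C_+$ and $C_-$ contains only this one $\FF_p$-vertex. Thus $\mathcal{O}_\Delta$ produces $2h_2(\Delta)\delta_p(\Delta)$ oriented cycles through the spine in $\mathcal{G}_{\mathcal{O}_\Delta,\ell}$.

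To finish, I would invoke Theorem \ref{CycleBijectionThm} to convert oriented cycles into directed cycles in $\mathcal{G}_{p,\ell}$. Each oriented cycle carries a canonical $[\mathfrak{l}]$-direction and so gives a single directed rim, with rims identified up to conjugation of orientation. A short pushforward calculation shows conjugation swaps the $[\mathfrak{l}]$-directed rim on $C_+$ with the $[\overline{\mathfrak{l}}]$-directed rim on $C_-$, so the four directed rims at each spine $j(E)$ collapse to two conjugation classes that realize the two opposite directed $m$-cycles through $j(E)$ in $\mathcal{G}_{p,\ell}$. Summing over spine $j$-invariants and over $\Delta$ gives $F_p(r)$ as claimed, and Möbius inversion yields the theorem. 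The main obstacle I anticipate is keeping the factor of $2$ honest: I must confirm that the two optimal embeddings at each spine vertex genuinely give distinct oriented cycles (immediate from the inert-prime nonisomorphism $(E,\iota) \not\simeq (E,\overline{\iota})$), and that the conjugation identification of Theorem \ref{CycleBijectionThm} is exactly $2$-to-$1$ on the rims in question (equivalently, that no such rim is self-conjugate, which follows from the same nonisomorphism).
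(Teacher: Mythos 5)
Your proposal is correct and follows essentially the same route as the paper: Möbius inversion over divisors of $r$, reduction to the per-discriminant count $2h_2(\Delta)\delta_p(\Delta)$ via Theorem \ref{ChenXueRootCountThm}, Corollary \ref{FpVertexOrientationSymmetryCorollary}/Proposition \ref{FpvertexOrientationSymmetryOddLengthProp}, and the bijection of Theorem \ref{CycleBijectionThm}. You simply spell out the steps (two orientations per $\FF_p$-root from inertness, one $\FF_p$-vertex per oriented cycle, the $2$-to-$1$ conjugation bookkeeping, and the need to control $M_{\ell,m}$ for divisors $m\mid r$) that the paper's terser proof leaves implicit.
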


\begin{remark}\label{ExplicitFormulaIsElementaryRmk}
Note that using Theorem \ref{ChenXueRootCountThm} and Theorem \ref{FundGenusTheoryThm}, $\delta_p(\Delta)$ and $h_2(\Delta)$ can both be calculated using only the factorization of $\Delta$, legendre symbol computations, and modular arithmetic. Thus the formula in Theorem \ref{ExplicitNsCountThm} depends entirely on ``elementary'' techniques, and does not require knowledge of the relevant class groups.
\end{remark}

\begin{proof}
By Theorem \ref{CycleBijectionThm}, we have a map from isogeny cycles  of length $r$ in $\mathcal{G}_\ell$ to orders in $\mathcal{I}_{\ell, r}$, given by finding the unique order $\mathcal{O}$ such that the cycle appears as a rim in $\mathcal{G}_{\mathcal{O}, \ell}$. Restricting to cycles along the spine, Corollary \ref{FpVertexOrientationSymmetryCorollary} shows that if $\mathcal{O}$ produces spine cycles, then it produces $2h_2(\Delta)$ of them, with the factor of $2$ coming from considering directions of traversal. We therefore have that $\sum_{d \mid r} n_{s,p}(d) = 2\sum_{\Delta \in d\left(\mathcal{I}_{\ell, n \mid d}\right)} \delta_p(\Delta)h_2(\Delta)$, and M\"obius inversion finishes the proof.
\end{proof}

Turning to the average number of spine cycles as $p$ varies, we have the following corollary:

\begin{corollary}\label{AverageNsCor}
Let $\{p_1, \hdots, p_n\}$ be a set of primes, all of which are greater than or equal to $M_{\ell,r}$. Then we have that 
\begin{equation}\label{EquationAverageNsCor}
\frac{1}{n}\sum_{i=1}^n n_{s,p_i}(r) = 2\sum_{d \mid r} \mu(d) \sum_{\Delta \in d(\mathcal{I}_{\ell, n \mid \frac{r}{d}})} \left(\frac{1}{n}\sum_{i=1}^n \delta_{p_i}(\Delta)\right)h_2(\Delta).
\end{equation}
\end{corollary}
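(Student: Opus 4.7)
The plan is to derive the identity directly from Theorem \ref{ExplicitNsCountThm} by averaging over the primes $p_1, \ldots, p_n$ and swapping the order of summation. Since $p_i \geq M_{\ell, r}$ for every $i$, Theorem \ref{ExplicitNsCountThm} applies to each summand on the left-hand side and gives
\[
n_{s,p_i}(r) \;=\; 2\sum_{d \mid r} \mu(d) \sum_{\Delta \in d(\mathcal{I}_{\ell, n \mid \frac{r}{d}})} \delta_{p_i}(\Delta)\, h_2(\Delta).
\]

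Next I would sum this identity over $i = 1, \ldots, n$, divide by $n$, and interchange the finite sums over $d \mid r$, over $\Delta \in d(\mathcal{I}_{\ell, n \mid \frac{r}{d}})$, and over $i$. The interchange is legitimate because $r$ has only finitely many divisors and each set $d(\mathcal{I}_{\ell, n\mid r/d})$ is finite by the parametrization in Proposition \ref{DiscriminantDescriptionProp} (the coordinates $(x,f)$ in that description range over a finite set). Once the sums have been interchanged, the quantities $\mu(d)$ and $h_2(\Delta)$ are independent of $i$, so they can be pulled out of the inner $\frac{1}{n}\sum_i$, leaving precisely $\frac{1}{n}\sum_{i=1}^n \delta_{p_i}(\Delta)$ multiplied by $2\mu(d) h_2(\Delta)$, which is the right-hand side of \eqref{EquationAverageNsCor}.

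There is essentially no obstacle: the only thing to verify carefully is that each $p_i$ satisfies the hypothesis $p_i > M_{\ell, r}$ needed to invoke Theorem \ref{ExplicitNsCountThm}, which is precisely what the corollary assumes. I would therefore present the proof as a one-line application of Theorem \ref{ExplicitNsCountThm}, with a brief remark noting that the swap of summation requires only finiteness of $d(\mathcal{I}_{\ell, n\mid r/d})$ (guaranteed by Proposition \ref{DiscriminantDescriptionProp}) and that $\mu(d)$ and $h_2(\Delta)$ depend only on $d$ and $\Delta$, not on the prime.
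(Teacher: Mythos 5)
Your proposal is correct and is essentially the paper's own argument: the paper likewise obtains the identity by applying the exact count of Theorem \ref{ExplicitNsCountThm} to each $p_i$ and collecting the terms with the same $d$ and $\Delta$, the interchange being harmless since all sums involved are finite. (The paper's proof cites Theorem \ref{EventualDistributionThm}, which appears to be a typo for Theorem \ref{ExplicitNsCountThm}; your reading is the intended one.)
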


\begin{proof}
This follows from Theorem \ref{EventualDistributionThm} by collecting all terms with the same $d, \Delta$.
\end{proof}

In order to compute the product on the right hand side of \eqref{EquationAverageNsCor}, we turn our attention to computing the average of the $\delta_{p_i}(\Delta)$. \\

Computing this average from Theorem \ref{ChenXueRootCountThm} is possible via a case-by-case analysis, but we will give a more conceptual proof based on the work of Li, Li, and Ouyang. We begin with the following theorem:

\begin{theorem}[\cite{LiLiOuyang21}, Theorem 4.1]\label{LiLiOuyangThm}
Let $\mathcal{O}$ be an order in an imaginary quadratic field $K$, and $\mathcal{H}_\mathcal{O}(x)$ be its Hilbert class polynomial. Let $F$ be the genus field of $\mathcal{O}$, $F^+ = F \cap \mathbb{R}$, and $p$ be a prime that does not split in $K$. Further, assume that $p \nmid \disc \mathcal{H}_\mathcal{O}(x)$ and $p \nmid \disc(\mathcal{O})$. Then $\mathcal{H}_\mathcal{O}(x)$ has a root in $\FF_p$ if and only if $p$ splits completely in $F^+$, 
\end{theorem}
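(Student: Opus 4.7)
My plan is to translate the existence of an $\FF_p$-root of $\mathcal{H}_\mathcal{O}(x)$ into a Frobenius condition in $G := \Gal(H_\mathcal{O}/\QQ)$ (with $H_\mathcal{O}$ the ring class field of $\mathcal{O}$), and then use the generalized dihedral structure of $G$ to rewrite it as complete splitting of $p$ in $F^+$. First note that $p \nmid \disc(\mathcal{O})$ forces $H_\mathcal{O}/K$ to be unramified above $p$, while $p \nmid \disc \mathcal{H}_\mathcal{O}(x)$ forces the $h(\mathcal{O})$ roots $j_1,\ldots,j_h \in H_\mathcal{O}$ of $\mathcal{H}_\mathcal{O}(x)$ to have distinct reductions modulo any prime $\mathfrak{P} \mid p$. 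Hence $\mathcal{H}_\mathcal{O}(x)$ has an $\FF_p$-root iff for some $\mathfrak{P} \mid p$ of $H_\mathcal{O}$ and some $i$ we have $\mathrm{Frob}_\mathfrak{P}(j_i) = j_i$.

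Next I would exploit $G \cong \cl(\mathcal{O}) \rtimes \langle \tau \rangle$, where $\tau$ is complex conjugation and $\tau \rho \tau^{-1} = \rho^{-1}$ for $\rho \in \cl(\mathcal{O})$. After identifying the roots with $\cl(\mathcal{O})$ via the Artin map, $[\mathfrak{a}] \leftrightarrow j_{[\mathfrak{a}]}$, the relation $\overline{\mathfrak{a}} \sim \mathfrak{a}^{-1}$ in $\cl(\mathcal{O})$ gives $\tau \cdot j_{[\mathfrak{a}]} = j_{[\mathfrak{a}]^{-1}}$, so $(\rho \tau) \cdot j_{[\mathfrak{a}]} = j_{[\mathfrak{a}]}$ iff $\rho = [\mathfrak{a}]^2 \in \cl(\mathcal{O})^2$. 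Since $p$ does not split in $K$ (and does not ramify, because $p \nmid \disc \mathcal{O}$), $\mathrm{Frob}_\mathfrak{P}$ lies in the non-trivial coset $\cl(\mathcal{O}) \tau$; write $\mathrm{Frob}_\mathfrak{P} = \rho_0 \tau$. A short computation (conjugating by $\mu \in \cl(\mathcal{O})$ gives $\mu^2 \rho_0 \tau$, and conjugating by $\tau$ gives $\rho_0^{-1}\tau$, which lies in the same $\cl(\mathcal{O})^2$-coset as $\rho_0\tau$) shows its $G$-conjugacy class is the single coset $\cl(\mathcal{O})^2 \rho_0 \tau$. Combining, ``some Frobenius over $p$ fixes some root'' is equivalent to $\rho_0 \in \cl(\mathcal{O})^2$.

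Finally I would pass to the genus field. By definition $F$ is the subfield of $H_\mathcal{O}$ fixed by $\cl(\mathcal{O})^2 \subseteq \Gal(H_\mathcal{O}/K)$, so $\Gal(F/\QQ) = G/\cl(\mathcal{O})^2 \cong (\cl(\mathcal{O})/\cl(\mathcal{O})^2) \times \langle \tau \rangle$, the semidirect product collapsing to a direct product because $\tau$ acts trivially on the $2$-torsion quotient. Then $F^+ = F^{\langle \tau \rangle}$ has $\Gal(F^+/\QQ) \cong \cl(\mathcal{O})/\cl(\mathcal{O})^2$, and the image of $\mathrm{Frob}_\mathfrak{P}$ in $\Gal(F^+/\QQ)$ is the class of $\rho_0$ modulo $\cl(\mathcal{O})^2$. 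This class is trivial exactly when $\rho_0 \in \cl(\mathcal{O})^2$, i.e., exactly when $p$ splits completely in $F^+$, which combined with the previous paragraph gives the desired biconditional.

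The main obstacle will be making the action of $\tau$ on the $j_{[\mathfrak{a}]}$ rigorous: one must invoke the CM-theoretic fact that the Artin symbol of $[\mathfrak{a}]$ sends $j(\CC/\mathcal{O})$ to $j(\CC/\mathfrak{a}^{-1})$, together with $\overline{\mathfrak{a}} \sim \mathfrak{a}^{-1}$, to derive $\tau \cdot j_{[\mathfrak{a}]} = j_{[\mathfrak{a}]^{-1}}$ with the correct sign conventions. Once that identification is secured, the remaining steps are formal calculations in a generalized dihedral group.
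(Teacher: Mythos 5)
The paper offers no proof of this statement: it is imported verbatim as Theorem 4.1 of \cite{LiLiOuyang21} and used as a black box, so there is no internal argument to compare against. Your proposal is a correct, self-contained derivation along standard generalized-dihedral lines, and it uses each hypothesis exactly where it is needed: $p \nmid \disc(\mathcal{O})$ makes $p$ unramified in the ring class field $H_\mathcal{O}$ (so $\mathrm{Frob}_\mathfrak{P}$ is well defined) and, together with non-splitness, forces $p$ to be inert in $K$ and places $\mathrm{Frob}_\mathfrak{P}$ in the coset $\cl(\mathcal{O})\tau$; while $p \nmid \disc \mathcal{H}_\mathcal{O}(x)$ makes reduction modulo $\mathfrak{P}$ injective on the roots, so that ``$\tilde{\jmath} \in \FF_p$'' is equivalent to ``$\mathrm{Frob}_\mathfrak{P}$ fixes $j$.'' The computations that $\rho\tau$ fixes a root iff $\rho \in \cl(\mathcal{O})^2$, that the Frobenius conjugacy class over $p$ is the coset $\cl(\mathcal{O})^2\rho_0\tau$, and that triviality of $\rho_0$ modulo $\cl(\mathcal{O})^2$ is exactly complete splitting of $p$ in $F^+$ are all correct. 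Two points deserve explicit citation in a full write-up, though neither is a gap: (i) your ``by definition $F$ is the fixed field of $\cl(\mathcal{O})^2$'' is, if one instead defines the genus field as the explicit multiquadratic field attached to $\disc(\mathcal{O})$, a theorem of genus theory for (possibly non-maximal) orders --- equivalently, $\cl(\mathcal{O})^2 = [G,G]$ and $F$ is the maximal subfield of $H_\mathcal{O}$ abelian over $\QQ$; (ii) the action of complex conjugation on the roots, $\tau \cdot j(\CC/\mathfrak{a}) = j(\CC/\overline{\mathfrak{a}}) = j_{[\mathfrak{a}]^{-1}}$, which you flagged yourself, follows from $j(\overline{z}) = \overline{j(z)}$ together with $\mathfrak{a}\overline{\mathfrak{a}} = N(\mathfrak{a})\mathcal{O}$ for proper $\mathcal{O}$-ideals; any sign-convention ambiguity in the Artin action only replaces $[\mathfrak{a}]^2$ by $[\mathfrak{a}]^{-2}$ and does not affect the conclusion $\rho \in \cl(\mathcal{O})^2$.
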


In the cases of interest to us, the requirements that $p \nmid \disc \mathcal{H}_\mathcal{O}(x)$ and $p \nmid \disc(\mathcal{O})$ will always be satisfied, as the following lemma shows:

\begin{lemma}\label{LiLiOuyangApplicabilityLemma}
If $p > M_{\ell, r}$, then $p \nmid \disc \mathcal{H}_\mathcal{O}(x)$ and $p \nmid \disc(\mathcal{O})$ for all $\mathcal{O} \in \mathcal{I}_{\ell, r}$.
\end{lemma}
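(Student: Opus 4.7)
The plan is to verify the two divisibility assertions separately.

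For $p \nmid \disc(\mathcal{O})$: I would unpack the definition $M_{\ell,r} = \max\{4,\ \max\{d_1 d_2/4 : d_1, d_2 \in d(\mathcal{I}_{\ell,r})\}\}$. Taking $d_1 = d_2 = d$ for any $d \in d(\mathcal{I}_{\ell,r})$ yields $M_{\ell,r} \geq d^2/4$; combined with $M_{\ell,r} \geq 4$, this forces $p > |d|$ (since $d^2/4 \geq |d|$ whenever $|d| \geq 4$). In particular $p \nmid d = \disc(\mathcal{O})$, and moreover $p$ exceeds the conductor $f$ of $\mathcal{O}$, since $|d| \geq f$.

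For $p \nmid \disc \mathcal{H}_\mathcal{O}(x)$: it is equivalent to show that $\widetilde{\mathcal{H}_\mathcal{O}}(x)$ is separable in $\overline{\FF_p}[x]$, i.e., that distinct roots of $\mathcal{H}_\mathcal{O}(x)$ in $\overline{\QQ}$ reduce to distinct elements of $\overline{\FF_p}$. Since these roots are precisely the $j$-invariants of elliptic curves over $\overline{\QQ}$ with CM by $\mathcal{O}$, the task reduces to injectivity of the reduction map on the corresponding set of isomorphism classes. I would split into cases according to whether $p$ splits in the field $K$ containing $\mathcal{O}$.

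If $p$ does not split in $K$, the reductions are supersingular and Corollary \ref{ChenXueInjectivityCor} applies directly, using the bound $p > |\disc \mathcal{O}|$ already established. If $p$ splits in $K$, the reductions are ordinary with endomorphism ring $\mathcal{O}$ (by Lemma \ref{OnukiLemma}, since $p$ is coprime to the conductor); in this case injectivity is a classical consequence of the Deuring correspondence between $\overline{\QQ}$-isomorphism classes of CM curves with endomorphism ring $\mathcal{O}$ and $\overline{\FF_p}$-isomorphism classes of ordinary curves with endomorphism ring $\mathcal{O}$, realized by reduction modulo any chosen prime of the Hilbert class field above $p$. Equivalently, $\cl(\mathcal{O})$ acts simply transitively on both sides and the reduction map is equivariant, forcing bijectivity.

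The main obstacle is the ordinary case, since Corollary \ref{ChenXueInjectivityCor} does not cover it. I would resolve this with a citation to a standard CM-theory reference (for example Silverman's \emph{Advanced Topics in the Arithmetic of Elliptic Curves}, Chapter II) rather than reproving the Deuring bijection from scratch; no new ideas appear to be required here.
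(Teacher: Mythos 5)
Your proposal is correct, and for the first assertion and the non-split case of the second it coincides with the paper's argument: the paper likewise deduces $p > |\disc(\mathcal{O})|$ from the definition of $M_{\ell,r}$ and then invokes Corollary \ref{ChenXueInjectivityCor} to conclude that $\widetilde{\mathcal{H}_\mathcal{O}}(x)$ has no repeated roots. Where you diverge is in noticing that Corollary \ref{ChenXueInjectivityCor} carries the hypothesis that $p$ does not split in $\mathcal{O}$, whereas the lemma is stated for all $\mathcal{O} \in \mathcal{I}_{\ell,r}$ with no splitting assumption; the paper's proof applies the corollary unconditionally and is silent on the split (ordinary) case, which you handle separately via the Deuring correspondence. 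This extra case analysis is a genuine improvement in rigor: it proves the lemma as literally stated, at the cost of importing a standard CM fact not otherwise cited in the paper. That said, the gap is harmless in context, since the only place the lemma is used (Lemma \ref{ArtinSymbolLemma}, via Theorem \ref{LiLiOuyangThm}) already restricts to primes that do not split in $K$, so the non-split case is all that is ever needed; one could equally repair the statement by adding that hypothesis rather than adding your ordinary-case argument. Both resolutions are fine; yours keeps the statement general.
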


\begin{proof}
By definition of $M_{\ell, r}$, we have that $p > M_{\ell, r}$ implies that $p > \disc(\mathcal{O})$ for all $\mathcal{O} \in \mathcal{I}_{\ell, r}$. Thus $p \nmid \disc(\mathcal{O})$ for any $\mathcal{O} \in \mathcal{I}_{\ell, r}$. Further, Corollary \ref{ChenXueInjectivityCor} implies that $\mathcal{H}_{\mathcal{O}}(x)$ has no repeated roots modulo $p$, and so we see that $p \nmid \disc \mathcal{H}_{D}(x)$ as well.
\end{proof}

We now use Theorem \ref{LiLiOuyangThm} to reinterpret $\delta_{p_i}(\Delta)$ in terms of the Artin symbol $\left(\frac{F/\QQ}{p}\right)$:

\begin{lemma}\label{ArtinSymbolLemma}
Let $\Delta \in d(\mathcal{I}_{\ell, r})$, and $p$ be a prime such that $p > M_{\ell, r}$. Let $\sigma \in \Gal(F / \QQ)$ be the unique Galois element such that $\sigma|_{F^+}$ is the identity, and $\sigma|_{K}$ is conjugation. Then $\delta_p(\Delta) = 1$ if and only if $\left(\frac{F/\QQ}{p}\right) = \sigma$. 
\end{lemma}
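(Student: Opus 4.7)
The plan is to reduce $\delta_p(\Delta) = 1$ to a pair of independent splitting conditions on $p$ and then recognize that pair as exactly the specification of the Frobenius element of $\Gal(F/\QQ)$ through the restriction map to $\Gal(K/\QQ) \times \Gal(F^+/\QQ)$.

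First I would check that the hypotheses of Theorem \ref{LiLiOuyangThm} are in force. Lemma \ref{LiLiOuyangApplicabilityLemma} gives $p \nmid \disc \mathcal{H}_\mathcal{O}(x)$ and $p \nmid \disc(\mathcal{O})$ for $p > M_{\ell, r}$. Because $p > |\disc(\mathcal{O})|$ forces $p$ not to divide the conductor of $\mathcal{O}$, the splitting of $p$ in $\mathcal{O}$ coincides with its splitting in $K$; in particular ``$p$ inert in $\mathcal{O}$'' is ``$p$ inert in $K$.'' Combining these observations with Theorem \ref{LiLiOuyangThm} rewrites $\delta_p(\Delta) = 1$ as the conjunction ``$p$ is inert in $K$ and $p$ splits completely in $F^+$.'' I would also note that the Artin symbol $\left(\frac{F/\QQ}{p}\right)$ is a well-defined element of $\Gal(F/\QQ)$: the genus field sits inside the ring class field of $\mathcal{O}$, which is unramified at primes coprime to the conductor, and it is abelian over $\QQ$ by construction.

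Next I would record the structural identity $F = K \cdot F^+$. Since $K$ is imaginary quadratic and $F^+ \subset \RR$, we have $K \cap F^+ = \QQ$, so $[K \cdot F^+ : F^+] = 2$; complex conjugation provides a non-trivial element of $\Gal(F/F^+)$, giving $[F : F^+] = 2$ and hence $F = K \cdot F^+$. Since $F/\QQ$ is abelian, any element of $\Gal(F/\QQ)$ is determined by its restrictions to $K$ and $F^+$, which establishes that the $\sigma$ in the statement is well-defined (it is simply the restriction of complex conjugation to $F$). The equivalence now falls out by matching restrictions: $\left(\frac{F/\QQ}{p}\right)|_K$ is the non-trivial element of $\Gal(K/\QQ)$ iff $p$ is inert in $K$, and $\left(\frac{F/\QQ}{p}\right)|_{F^+}$ is the identity iff $p$ splits completely in $F^+$. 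By the preceding decomposition, both conditions hold together iff $\left(\frac{F/\QQ}{p}\right) = \sigma$, which is precisely the rewriting of $\delta_p(\Delta) = 1$ obtained in the first step.

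The main obstacle here is essentially bookkeeping rather than genuine mathematical difficulty: one has to confirm that the bound $p > M_{\ell, r}$ simultaneously ensures the unramifiedness of $p$ in $F$, the non-splitting hypothesis of Theorem \ref{LiLiOuyangThm}, and the agreement between inertness in $\mathcal{O}$ and in $K$. Beyond that, the argument is standard class field theory applied to the two natural intermediate fields $K$ and $F^+$.
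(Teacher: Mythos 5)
Your proposal is correct and follows essentially the same route as the paper: reduce $\delta_p(\Delta)=1$ to ``$p$ inert in $K$ and split completely in $F^+$'' via Theorem \ref{LiLiOuyangThm} and Lemma \ref{LiLiOuyangApplicabilityLemma}, establish $F = K\cdot F^+$ with $K \cap F^+ = \QQ$ so that Galois elements are determined by their restrictions, and then match the restrictions of the Artin symbol. The only cosmetic difference is that you identify $[F:F^+]=2$ via complex conjugation rather than the paper's degree count $[F:\QQ]=2\cdot 2^{\mu-1}$; both are fine.
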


\begin{proof}
Since $p > M_{\ell, r}$ we have that $p > \Delta$, and so $p$ is unramified in $K = \operatorname{Frac}(\mathcal{O}_\Delta)$. Since $F$ is contained in the Hilbert class field of $K$, which is an unramified extension of $K$, we have that $p$ is unramified in $F$ as well. Thus $\left(\frac{F/\QQ}{p}\right)$ is well-defined. \\

Next, we note that $\sigma$ is well-defined, since $F^+ \subset \RR$, and $K$ is imaginary quadratic, so we have that $F^+ \cap K = \QQ$. Thus $[F^+K : \QQ] = [K : \QQ][F^+ : \QQ] =  2(2^{\mu - 1})$. This is equal to the degree of $F$ over $\QQ$, and since $F^+ K \subset F$, we have that $F = F^+ K$. It follows that $\Gal(F / \QQ) \cong \Gal(F^+ / \QQ) \times \Gal(K / \QQ)$, which shows that $\sigma$ is well-defined. \\

Finally, we come to the proof that $\delta_p(\Delta) = 1$ if and only if $\left(\frac{F / \QQ}{p}\right) = \sigma$. By Theorem \ref{LiLiOuyangThm} and Lemma \ref{LiLiOuyangApplicabilityLemma}, $\delta_p(\Delta) = 1$ if and only if $p$ is inert in $K$ and splits completely in $F^+$. Since $p$ is unramified in $F^+$, the second requirement is equivalent to requiring that the inertia degree of $p$ is $1$, which is in turn equivalent to the assertion that $\left(\frac{F/\QQ}{p}\right) = 1$ by \cite[Corollary 5.21]{Cox}. Similarly, the first condition is equivalent to the inertia degree of $p$ in $K$ being $2$, which is equivalent to $\left(\frac{K / \QQ}{p}\right)$ being conjugation since this is the only element of order $2$ in $\Gal(K / \QQ)$. \\

Finally, by \cite[Exercise 5.16]{Cox}, we have that $\left(\frac{F^+ / \QQ}{p}\right)$ is the restriction of $\left(\frac{F / \QQ}{p}\right)$ to $F^+$, and similarly for $K$.
\end{proof}

We now use Chebotarev density to determine $\frac{1}{n} \sum_{i=1}^n \delta_{p_i}(\Delta)$. Recall the following version of Chebotarev density:

\begin{theorem}[\cite{LangANT}, pg. 167]\label{TcebotarevDensityThm}
Let $K / \QQ$ be an abelian extension, and let $\sigma \in \Gal(K / \QQ)$. Then 
$$\lim_{x \to \infty} \frac{\#\{p : p \leq x, p \text{ unramified, } \left(\frac{K / \QQ}{p}\right) = \sigma\}}{\#\{p : p \leq x\}} = \frac{1}{[K : \QQ]},$$ where $\left(\frac{K / \QQ}{p}\right)$ is the Artin symbol.
\end{theorem}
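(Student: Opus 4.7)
The plan is to reduce the statement to Dirichlet's theorem on primes in arithmetic progressions via Kronecker--Weber and Artin reciprocity, and then to invoke the analytic input needed to upgrade the resulting Dirichlet density to a natural density. Since $K/\QQ$ is abelian, Kronecker--Weber provides an embedding $K \subseteq \QQ(\zeta_m)$ for some integer $m$ (which may be taken to be the conductor of $K$). The restriction map $\Gal(\QQ(\zeta_m)/\QQ) \to \Gal(K/\QQ)$ is compatible with Artin symbols: for each prime $p \nmid m$, the symbol $\left(\frac{K/\QQ}{p}\right)$ equals the restriction to $K$ of $\left(\frac{\QQ(\zeta_m)/\QQ}{p}\right)$, which under the canonical isomorphism $\Gal(\QQ(\zeta_m)/\QQ) \cong (\ZZ/m\ZZ)^\times$ is simply the residue class of $p$ modulo $m$.

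Thus the set of unramified primes $p$ with $\left(\frac{K/\QQ}{p}\right) = \sigma$ corresponds to a union of exactly $[\QQ(\zeta_m):K]$ residue classes in $(\ZZ/m\ZZ)^\times$, namely the preimage of $\sigma$ under the restriction map. Applying Dirichlet's theorem in its strong form (the prime number theorem for arithmetic progressions), each such residue class contains a proportion $1/\varphi(m)$ of all primes $p \leq x$ in the limit $x \to \infty$. Summing contributions yields a natural density of $[\QQ(\zeta_m):K]/\varphi(m) = [\QQ(\zeta_m):K]/[\QQ(\zeta_m):\QQ] = 1/[K:\QQ]$, which is the claimed value. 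The finitely many ramified primes (those dividing $m$, or more generally the conductor of $K$) contribute zero density and may be discarded.

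The main obstacle is the analytic input underlying Dirichlet's theorem, which must be developed through the theory of Dirichlet $L$-functions. The Euler product factorization $\zeta_{\QQ(\zeta_m)}(s) = \prod_\chi L(s,\chi)$, together with the pole of $\zeta(s)$ at $s = 1$, forces the non-vanishing $L(1,\chi) \neq 0$ for every non-trivial character $\chi$ of $(\ZZ/m\ZZ)^\times$; this non-vanishing is classical but delicate for real quadratic characters, where it can be established via Dirichlet's class number argument. To obtain a natural density rather than merely a Dirichlet density, one needs in addition a zero-free region for $L(s,\chi)$ just to the left of the line $\RP(s) = 1$, due to de la Vall\'ee Poussin, from which the prime number theorem for arithmetic progressions with an explicit error term follows by contour integration. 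These ingredients are exactly those systematically developed in the cited reference \cite{LangANT}, so the appeal to the theorem as stated is legitimate once the reduction steps above have been made.
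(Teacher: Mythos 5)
This statement is quoted background: the paper gives no proof of it, citing Lang directly, so there is nothing internal to compare your argument against. On its own terms your proposal is correct and is the classical proof of the abelian case of Chebotarev with natural density: Kronecker--Weber embeds $K$ in $\QQ(\zeta_m)$ with $m$ the conductor, the Artin symbol is compatible with restriction and corresponds to $p \bmod m$ in the cyclotomic field, so the condition $\left(\frac{K/\QQ}{p}\right) = \sigma$ cuts out exactly $\varphi(m)/[K:\QQ]$ residue classes, and the prime number theorem for arithmetic progressions (not merely Dirichlet density, a distinction you rightly flag since the statement asserts a natural density) gives each class density $1/\varphi(m)$; the finitely many ramified primes are negligible. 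The only difference from the cited source is organizational: Lang develops the density statements within the general machinery of $L$-series and class field theory, of which your cyclotomic reduction is the abelian specialization, and both ultimately rest on the same analytic inputs (non-vanishing of $L(1,\chi)$ and a zero-free region near $\RP(s)=1$). Since the paper only ever applies the theorem to the abelian extension $F/\QQ$ of Lemma \ref{ArtinSymbolLemma}, your argument fully suffices for the use made of it here.
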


We have the following lemma:

\begin{lemma}\label{AverageOfDeltaPiLemma}
Let $(p_i)_{i=1}^\infty$ be an increasing sequence of consecutive primes, with $p_1 > M_{\ell, r}$, and $\Delta \in d(\mathcal{I}_{\ell,r})$. Then $$\lim_{n \to \infty} \frac{1}{n}\sum_{i=1}^n \delta_{p_i}(\Delta) = \frac{1}{2^\mu}.$$
\end{lemma}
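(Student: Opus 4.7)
The plan is to translate $\delta_{p_i}(\Delta)$ into an Artin-symbol condition via Lemma~\ref{ArtinSymbolLemma}, and then invoke the Chebotarev density theorem (Theorem~\ref{TcebotarevDensityThm}) for the abelian extension $F/\QQ$. Since the hypothesis $p_1 > M_{\ell,r}$ forces every $p_i > M_{\ell,r}$, Lemma~\ref{ArtinSymbolLemma} applies to each $p_i$ and gives
$$\delta_{p_i}(\Delta) = 1 \iff \left(\frac{F/\QQ}{p_i}\right) = \sigma,$$
where $\sigma \in \Gal(F/\QQ)$ is the distinguished element restricting to the identity on $F^+$ and to complex conjugation on $K$. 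The first step is therefore simply to rewrite the partial sum as a counting function of this condition.

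Second, I would rephrase the arithmetic average over the sequence $(p_i)$ as a natural density over primes $p \leq x$, by setting $x_n := p_n$ so that $\pi(x_n) = n$. Then
$$\frac{1}{n}\sum_{i=1}^n \delta_{p_i}(\Delta) = \frac{\#\{p \leq x_n : p \text{ unramified in } F,\ \left(\frac{F/\QQ}{p}\right) = \sigma\}}{\pi(x_n)} + O\!\left(\tfrac{1}{n}\right),$$
where the error term absorbs the $O(1)$ primes that are either $\leq M_{\ell,r}$ or ramified in $F$. To apply Chebotarev, I need $F/\QQ$ to be abelian of known degree: the genus field $F$ is by construction abelian over $\QQ$, and the computation in the proof of Lemma~\ref{ArtinSymbolLemma} gives $[F:\QQ] = [F^+K:\QQ] = 2\cdot 2^{\mu-1} = 2^\mu$. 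Theorem~\ref{TcebotarevDensityThm} therefore yields
$$\lim_{n\to\infty} \frac{\#\{p \leq x_n : p \text{ unramified in } F,\ \left(\frac{F/\QQ}{p}\right) = \sigma\}}{\pi(x_n)} = \frac{1}{[F:\QQ]} = \frac{1}{2^\mu},$$
which, combined with the $O(1/n)$ error above, finishes the proof.

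There is no real obstacle: Lemma~\ref{ArtinSymbolLemma} does all of the number-theoretic work of encoding the conditions ``$p$ inert in $K$'' and ``$\mathcal{H}_\mathcal{O}$ has a root in $\FF_p$'' into a single Frobenius class in $\Gal(F/\QQ)$. The only mild care needed is in reconciling Chebotarev's natural-density formulation (over $p \leq x$) with the prescribed average over consecutive primes, handled by the substitution $x_n = p_n$; and in confirming that the finitely many primes below $M_{\ell,r}$ or ramified in $F$ contribute negligibly.
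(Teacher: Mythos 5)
Your proposal is correct and follows essentially the same route as the paper: translate $\delta_{p_i}(\Delta)$ into the Frobenius condition via Lemma~\ref{ArtinSymbolLemma}, apply Chebotarev to the degree-$2^\mu$ abelian extension $F/\QQ$, and absorb the finitely many primes below $p_1$ (and any ramified primes) into a vanishing error term. The paper carries out the same bookkeeping slightly more explicitly by extending $\delta_p$ to all primes and writing out the algebraic identity relating $\pi_\sigma(p_n)/\pi(p_n)$ to the partial average, but the argument is the same.
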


\begin{proof}
Let $\pi_\sigma(x) = \#\{p : p \leq x, p \text{ unramified, and } \delta_p(\Delta) = 1\}$ and $\pi(x) = \#\{ p : p \leq x\}$. By Lemma \ref{ArtinSymbolLemma} and Theorem \ref{TcebotarevDensityThm}, we have $$\lim_{x \to \infty} \frac{\pi_\sigma(x)}{\pi(x)} = \frac{1}{[F : \QQ]} = \frac{1}{2^{\mu}}.$$ \\

We now show that $$\lim_{n \to \infty} \frac{\pi_\sigma(p_n)}{\pi(p_n)} = \frac{1}{n}\sum_{i=1}^n \delta_{p_i}(\Delta).$$ Extend $\delta_p(\Delta)$ to all primes $p$ by setting $\delta_p(\Delta) = \begin{cases}1 & \text{if $\left(\frac{K / \QQ}{p}\right) = \sigma$} \\ 0 & \text{otherwise}\end{cases}$. Then we can compute directly that
\begin{align*}
\frac{\pi_\sigma(p_n)}{\pi(p_n)} & = \frac{\sum_{i=1}^n \delta_{p_i}(\Delta) + \sum_{p < p_1} \delta_p(\Delta)}{n + \pi(p_1)} \\
& = \left(1 + \frac{\pi(p_1)}{n}\right)^{-1}\left(\frac{1}{n}\sum_{i=1}^n \delta_{p_i}(\Delta)\right) + \left(1 + \frac{\pi(p_1)}{n}\right)^{-1}\left(\frac{1}{n}\sum_{p < p_1}\delta_p(\Delta)\right),
\end{align*}
and the result follows by taking $n \to \infty$.
\end{proof}

Combining Lemma \ref{AverageOfDeltaPiLemma} and Corollary \ref{AverageNsCor}, we can prove the final result from the introduction:

\begin{theorem}\label{AverageNsThm}
Let $(p_i)_{i=1}^\infty$ be an increasing sequence of consecutive primes. Then 
$$\lim_{n\to \infty} \frac{1}{n}\sum_{i=1}^n n_{s,p_i}(r) = \sum_{d \mid r} \mu(d) \#d(\mathcal{I}_{\ell, n \mid \frac{r}{d}}).$$
\end{theorem}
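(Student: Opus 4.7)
The plan is to combine the explicit formula provided by Corollary \ref{AverageNsCor} with the density statement of Lemma \ref{AverageOfDeltaPiLemma} and the class-group count given by Theorem \ref{FundGenusTheoryThm}. Starting from
$$\frac{1}{n}\sum_{i=1}^n n_{s,p_i}(r) = 2\sum_{d \mid r} \mu(d) \sum_{\Delta \in d(\mathcal{I}_{\ell, n \mid \frac{r}{d}})} \left(\frac{1}{n}\sum_{i=1}^n \delta_{p_i}(\Delta)\right) h_2(\Delta),$$
I would observe that both outer sums are finite (the first is over divisors of $r$, and the second is over the finite set $d(\mathcal{I}_{\ell, n \mid \frac{r}{d}})$). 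Therefore the limit as $n \to \infty$ can be passed through both sums, and I only need to evaluate the limit inside.

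Next, for each fixed $\Delta \in d(\mathcal{I}_{\ell, n \mid \frac{r}{d}})$, Lemma \ref{AverageOfDeltaPiLemma} gives $\lim_{n \to \infty}\frac{1}{n}\sum_{i=1}^n \delta_{p_i}(\Delta) = 1/2^\mu$, where $\mu$ is the exponent from Theorem \ref{FundGenusTheoryThm} associated to $\Delta$. The key identification is now to recognize that the factor $h_2(\Delta) = |\cl(\mathcal{O}_\Delta)[2]|$ appearing in Corollary \ref{AverageNsCor} equals $2^{\mu-1}$ by the very same theorem, so that the product $h_2(\Delta)\cdot (1/2^\mu)$ equals exactly $1/2$. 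This is the key cancellation: the density of primes satisfying the splitting condition is exactly the reciprocal of twice the number of $\FF_p$-roots produced when the condition is met.

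Substituting back, the outer factor of $2$ in Corollary \ref{AverageNsCor} multiplies with $1/2$ to give $1$, and the inner sum over $\Delta$ degenerates into simply counting the size of $d(\mathcal{I}_{\ell, n \mid \frac{r}{d}})$. This yields
$$\lim_{n\to\infty}\frac{1}{n}\sum_{i=1}^n n_{s,p_i}(r) = \sum_{d \mid r} \mu(d)\, \#d(\mathcal{I}_{\ell, n \mid \frac{r}{d}}),$$
as desired.

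I do not expect a genuine obstacle in this argument, since all of the hard work has been done in Corollary \ref{AverageNsCor} (which encapsulates the M\"obius inversion of Theorem \ref{ExplicitNsCountThm}), Lemma \ref{AverageOfDeltaPiLemma} (which is the Chebotarev input), and Theorem \ref{FundGenusTheoryThm} (which identifies $h_2$ with $2^{\mu-1}$). The only subtle point to verify carefully is that the $\mu$ appearing as the exponent in Lemma \ref{AverageOfDeltaPiLemma}, coming from the degree $[F:\QQ]=2^\mu$ of the genus field, really matches the $\mu$ of Theorem \ref{FundGenusTheoryThm}; this is standard since the genus field of $\mathcal{O}_\Delta$ has Galois group isomorphic to $\cl(\mathcal{O}_\Delta)/\cl(\mathcal{O}_\Delta)^2$, which has order $2^{\mu-1}$, so $F = F^+ K$ has degree $2\cdot 2^{\mu-1} = 2^\mu$ over $\QQ$. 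Once this compatibility is noted, the proof is a short, clean calculation.
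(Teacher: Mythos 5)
Your argument is essentially identical to the paper's proof: both start from Corollary \ref{AverageNsCor}, interchange the limit with the finite sums, apply Lemma \ref{AverageOfDeltaPiLemma} to get $1/2^\mu$, and cancel against $h_2(\Delta) = 2^{\mu-1}$ and the outer factor of $2$. The only detail you omit (which the paper dispatches in one sentence) is that the hypothesis of Corollary \ref{AverageNsCor} requires $p_i \geq M_{\ell,r}$, so one must note that only finitely many initial terms can violate this and they do not affect the limit.
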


\begin{proof}
We first note that since the sequence $(p_i)$ is increasing, only finitely many of the terms are less than $M_{\ell, r}$. Thus Corollary \ref{AverageNsCor} applies unchanged. \\

Next, note that the first two sums in Corollary \ref{AverageNsCor} are finite, we can interchange the limit and the sums to get that $$\lim_{n\to \infty} \frac{1}{n}\sum_{i=1}^n n_{s,p_i}(r) = \sum_{d \mid r} \mu(d) \sum_{\Delta \in d\left(\mathcal{I}_{\ell, n \mid \frac{r}{d}}\right)} 2 h_2(\Delta)\lim_{n \to \infty} \left(\frac{1}{n}\sum_{i=1}^n \delta_{p_i}(\Delta)\right).$$ The theorem follows from the fact that $h_2(\Delta) = 2^{\mu - 1}$, and the remaining limit is $\frac{1}{2^\mu}$ by Lemma \ref{AverageOfDeltaPiLemma}.
\end{proof}

As an example, we compute the expected number of $3$-cycles along the spine in $\mathcal{G}_3$.

\begin{example}\label{AvgNsThmExample}
Using Proposition \ref{DiscriminantDescriptionProp}, we see that $$d(\mathcal{I}_{3, n \mid 3}) = \{-107,-104,-92,-83,-59,-44,-23,-11,-8\},$$ and $d(\mathcal{I}_{3, n \mid 1}) = \{-11, -8\}$. Thus the expected number of $3$-cycles along the spine in $\mathcal{G}_3$ is given by 
$$\mu(1)\#d(\mathcal{I}_{3, n \mid 3}) + \mu(3)\#d(\mathcal{I}_{3, n \mid 1}) = 9 - 2 = 7.$$
\end{example} 

Example \ref{AvgNsThmExample} demonstrates the following corollary:

\begin{corollary}\label{AvgNsCountPrimeOrderCor}
With the notation and assumptions from Theorem \ref{AverageNsThm}, suppose further that $r$ is prime. Then $$\lim_{n \to \infty} \frac{1}{n} \sum_{i=1}^n n_{s,p_i}(r) = \#d(\mathcal{I}_{\ell, r}).$$
\end{corollary}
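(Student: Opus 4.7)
The plan is to apply Theorem \ref{AverageNsThm} directly and exploit the fact that a prime $r$ has only two positive divisors. Specializing the formula of Theorem \ref{AverageNsThm} to prime $r$, the only divisors $d \mid r$ are $d = 1$ and $d = r$, and since $\mu(1) = 1$ and $\mu(r) = -1$, the sum collapses to
\begin{equation*}
\lim_{n\to \infty} \frac{1}{n}\sum_{i=1}^n n_{s,p_i}(r) = \#d(\mathcal{I}_{\ell, n \mid r}) - \#d(\mathcal{I}_{\ell, n \mid 1}).
\end{equation*}

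Next I would unpack the set $d(\mathcal{I}_{\ell, n \mid r})$. Recalling that $d(\mathcal{I}_{\ell, n \mid s})$ consists of discriminants of $\ell$-fundamental orders $\mathcal{O}$ in which $\ell$ splits as $\mathfrak{l}\overline{\mathfrak{l}}$ with the class $[\mathfrak{l}]$ having order dividing $s$ in $\cl(\mathcal{O})$, primality of $r$ forces the order of $[\mathfrak{l}]$ to be either $1$ or $r$. This gives the disjoint decomposition
\begin{equation*}
d(\mathcal{I}_{\ell, n \mid r}) = d(\mathcal{I}_{\ell, 1}) \sqcup d(\mathcal{I}_{\ell, r}) = d(\mathcal{I}_{\ell, n \mid 1}) \sqcup d(\mathcal{I}_{\ell, r}),
\end{equation*}
where the second equality simply observes that $d(\mathcal{I}_{\ell, n \mid 1}) = d(\mathcal{I}_{\ell, 1})$ since the only positive divisor of $1$ is $1$. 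Taking cardinalities and substituting back yields
\begin{equation*}
\#d(\mathcal{I}_{\ell, n \mid r}) - \#d(\mathcal{I}_{\ell, n \mid 1}) = \#d(\mathcal{I}_{\ell, r}),
\end{equation*}
which is the claimed equality.

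There is no real obstacle here; the corollary is essentially a one-line consequence of Theorem \ref{AverageNsThm} once one identifies $d(\mathcal{I}_{\ell, n \mid r})$ for prime $r$. The only point meriting a sentence of justification is the disjointness of the decomposition above, which holds because the order of a fixed group element is a single well-defined positive integer, so a discriminant cannot simultaneously belong to $d(\mathcal{I}_{\ell, 1})$ and $d(\mathcal{I}_{\ell, r})$ for distinct values $1 \neq r$.
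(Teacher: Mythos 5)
Your proof is correct and follows essentially the same route as the paper: specialize the M\"obius sum of Theorem \ref{AverageNsThm} to the two divisors $1$ and $r$, and use the disjoint decomposition $d(\mathcal{I}_{\ell, n \mid r}) = d(\mathcal{I}_{\ell, 1}) \sqcup d(\mathcal{I}_{\ell, r})$ so that the $d(\mathcal{I}_{\ell,1})$ contributions cancel. Your extra sentence justifying disjointness is a harmless (and slightly more careful) addition to what the paper leaves implicit.
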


\begin{proof}
Since $r$ is prime, the only divisors of $r$ are $1$ and $r$. By Theorem \ref{AverageNsThm}, we have that 
\begin{align*}
\lim_{n \to \infty} \frac{1}{n} \sum_{i=1}^n n_{s,r}(p_i) & = \mu(1)\#d(\mathcal{I}_{\ell, n \mid r}) + \mu(r) \#d(\mathcal{I}_{\ell, n \mid 1}) \\
& = \mu(1)(\#d(\mathcal{I}_{\ell, r}) + \#d(\mathcal{I}_{\ell, 1})) + \mu(r) \#d(\mathcal{I}_{\ell, 1}) \\
& = \#d(\mathcal{I}_{\ell, r}) + \#d(\mathcal{I}_{\ell, 1}) - \#d(\mathcal{I}_{\ell, 1}) \\
& = \#d(\mathcal{I}_{\ell, r}).
\end{align*}
\end{proof}

\subsection{Remarks on computational verification}\label{ComputationalVerificationSubsection}

To verify the results of Theorem \ref{AverageNsThm} and Corollary \ref{AverageNsCor}, we collected data on $3$-cycles in the $3$-isogeny and $5$-isogeny graphs for all primes up to $10,000$. \\

\begin{figure}

\centering
\subfloat[3-cycles in the $3$-isogeny graph]{\label{Ell3Fig}
\centering
\includegraphics[width=0.46\linewidth]{"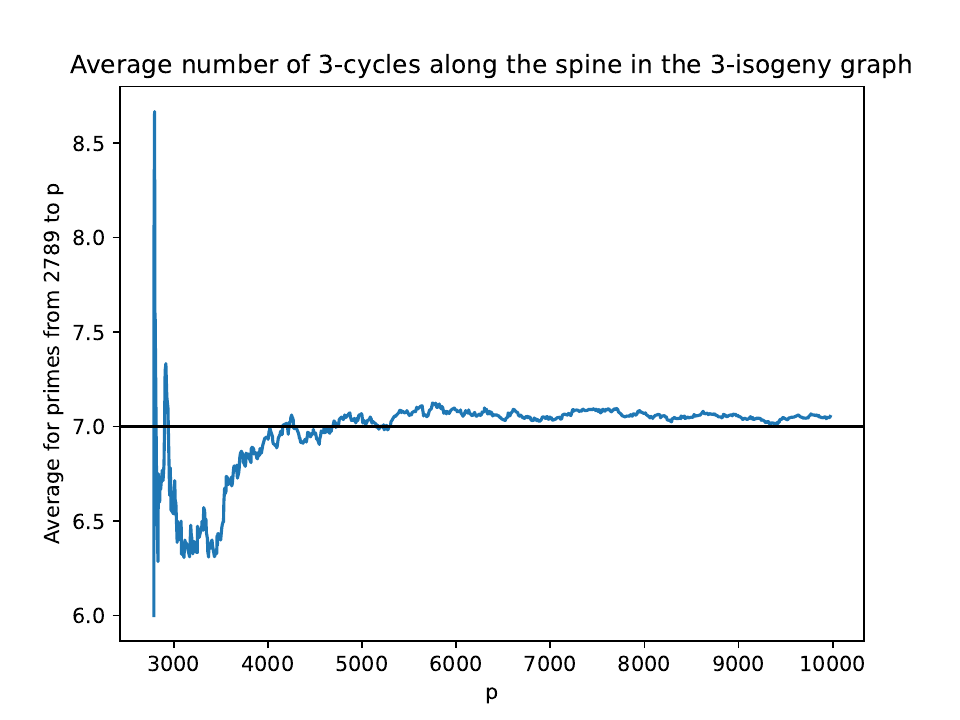"}
}
\hfill
\subfloat[3-cycles in the $5$-isogeny graph]{\label{Ell5Fig}
\centering

\includegraphics[width=0.46\linewidth]{"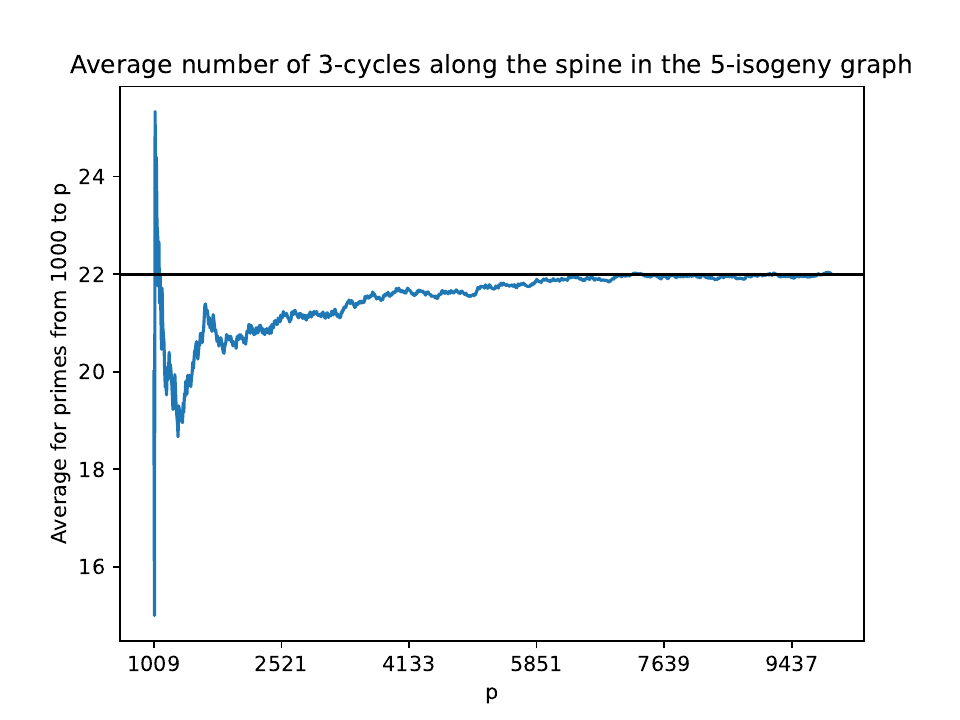"}
}

\caption{Average numbers of $3$-cycles along the spine in the $3$-isogeny and $5$-isogeny graphs}
\end{figure}

Figures \ref{Ell3Fig} and \ref{Ell5Fig} show the results, with the black lines indicating the predicted limit in Theorem \ref{AverageNsCor}. In the case of Figure \ref{Ell3Fig}, $2,789$ was chosen because it is the first prime larger than $M_{3,3}$. Since $M_{5,3} = 61,876$, considering only primes larger than $M_{5,3}$ was impractical, so $1,000$ was chosen arbitrarily. In particular, we see that even at primes less than $M_{\ell, r}$, the average converges quickly toward the final limit. \\

In Figure \ref{Ell3Fig}, we see that the average remains slightly above the predicted limit even at the largest primes. We hypothesize that this happens due to inequalities in prime races. The number of $3$-cycles along the spine produced by each determinant in $\mathcal{I}_{\ell, r}$ is determined by congruence conditions on $p$ modulo $d$ or $4d$, so it is likely that certain congruences are more common for small values of $p$. If these congruences produce more or less than the average number of $3$-cycles along the spine, then this would cause the average number of $3$-cycles along the spine for primes up to large bounds to remain slightly above or below the limit.

\section{Cycles of even length}\label{SecEvenCase}

We now consider the case of even cycles, where we are able to obtain partial results. The only place where our assumption that $r$ has odd length was used in an essential fashion is in Proposition \ref{FpvertexOrientationSymmetryOddLengthProp}, when we proved that each cycle of length $r$ contains either $0$ or $1$ $\FF_p$-vertices. \\

In this argument we showed that for large enough $p$, the number of $\FF_p$-vertices on each cycle in $\mathcal{G}_{\mathcal{O}, \ell}$ of length $r$ is either $0$ or odd. Further, this number has to divide a power of $2$. This allowed us to conclude that any cycle of odd length $r$ in $\mathcal{G}_\ell$ with one $\FF_p$-vertex has exactly one $\FF_p$-vertex. In the case where $r$ is even, however, the number of $\FF_p$-vertices on each cycle in $\mathcal{G}_{\mathcal{O}, \ell}$ can be a non-trivial power of $2$. In this section, we alter the graph theoretic arguments to show that under some additional assumptions, the number of such vertices is exactly $2$. We begin by defining a modified version of our bound $M_{\ell, r}$:

\begin{definition}\label{EvenBoundDef}
Let $r$ be even. We set $M_{\ell, r}^{\mathrm{strong}} := \max \{M_{\ell, r_i} : r_i < r\}.$
\end{definition}

\begin{remark}\label{EvenBoundRmk}
We expect that $M_{\ell, r}^{\textrm{strong}}$ is always equal to $M_{\ell, r}$, since the discriminants where primes above $\ell$ have order $r$ in the class group will generally be larger than discriminants where they have order $r_i$ for $r_i < r$. For the purposes of this paper, however, it is most important to have a finite bound. 
\end{remark}

We can now state partial results for Theorem \ref{EventualDistributionThm} in the case where $r$ is even:

\begin{theorem}\label{EvenCyclesThm}
Let $r$ be an even number that is not a power of $2$, and $p > M_{\ell, r}^{\textrm{strong}}$.
Then the results of Theorem \ref{EventualDistributionThm} and Corollary \ref{EventualDistributionCor} hold unchanged.
\end{theorem}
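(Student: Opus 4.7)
The only step in the proof of Theorem \ref{EventualDistributionThm} that actually used $r$ odd was Proposition \ref{FpvertexOrientationSymmetryOddLengthProp}, where parity forced the number of $\FF_p$-vertices on each cycle of $\mathcal{G}_{\mathcal{O},\ell}$ with at least one such vertex to be exactly $1$. The plan is to replace that proposition by an even-$r$ analogue that still pins down the count (as a constant depending only on $\mathcal{O}$), and then rerun the proofs of Theorem \ref{EventualDistributionThm} and Corollary \ref{EventualDistributionCor} with essentially no other change.

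The key observation is regularity. Under the hypotheses of Theorem \ref{FpVertexOrientationSymmetry} together with Lemma \ref{KanekoLemma} (which forces all stabilizers to be trivial, as in the proof of Proposition \ref{FpvertexOrientationSymmetryOddLengthProp}), the action of $\cl(\mathcal{O}) \times \langle \pi_p\rangle$ on the vertices of $\mathcal{G}_{\mathcal{O},\ell}$ is regular, and its subgroup $\cl(\mathcal{O})[2] \times \langle \pi_p\rangle$ of order $2^\mu$ acts regularly on the $\FF_p$-vertices. Fix an $\FF_p$-vertex $v_0$ in a cycle $C$; since $C$ is the $\langle[\mathfrak{l}]\rangle$-orbit of $v_0$, the vertex $[\mathfrak{l}]^j v_0$ is again an $\FF_p$-vertex precisely when $([\mathfrak{l}]^j,1)$ belongs to $\cl(\mathcal{O})[2] \times \langle \pi_p\rangle$, i.e., when $[\mathfrak{l}]^j \in \cl(\mathcal{O})[2]$, equivalently $r \mid 2j$. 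For $r$ even this gives exactly $j \in \{0, r/2\}$, so each cycle of $\mathcal{G}_{\mathcal{O},\ell}$ meeting the spine contains exactly two $\FF_p$-vertices, and the same count transfers to $\mathcal{G}_\ell$ via the forgetful map.

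With the value $k = 2$ in hand, the trichotomy $S_{d,1}, S_{d,2}, S_{d,3}$ in the proof of Theorem \ref{EventualDistributionThm} goes through verbatim, except that each $\mathcal{O} \in \mathcal{I}_{\ell,r}$ contributes $2^{\mu-1}$ cycles to the spine rather than $2^\mu$. Since $2^{\mu-1}$ still depends only on $\disc(\mathcal{O})$ and on $p$ modulo $8$ and modulo the odd prime factors of $\disc(\mathcal{O})$ (Theorem \ref{ChenXueRootCountThm}), the combined count is again determined by the residue of $p$ modulo $\operatorname{lcm}(8, d_1, \ldots, d_k)$. The comparison $\#V(\mathcal{S}_{p,\ell}) = O(\sqrt{p}\log p)$ versus $\#V(\mathcal{G}_{p,\ell}) = O(p)$ is unchanged, so Corollary \ref{EventualDistributionCor} follows as before.

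The main obstacle, and the reason for the hypotheses ``$r$ not a power of $2$'' and $p > M_{\ell,r}^{\mathrm{strong}}$, lies in controlling the bijection of Theorem \ref{CycleBijectionThm} in this regime. A rim of size $r$ in $\mathcal{G}_{\mathcal{O},\ell}$ could in principle forget to a shorter closed walk in $\mathcal{G}_\ell$ if $[\mathfrak{l}]^{r/2}$ happened to act as orientation-reversal on some vertex of the rim, which would cause $(E,\iota)$ and $(E,\bar\iota)$ to lie in the same rim and collapse $j$-invariants. Writing $r = 2^a m$ with $m$ odd and $m > 1$, the odd factor $m$ is used to rule this out: appealing to Lemma \ref{KanekoLemma} through every proper value $r_i < r$ (cf.\ Remark \ref{KanekoLemmaRmk}) under the enlarged bound $M_{\ell,r}^{\mathrm{strong}}$ forces such a collision to come from an optimal embedding into some $\mathcal{O}' \in \mathcal{I}_{\ell,r_i}$ with $r_i \mid m$, contradicting the separation of discriminants. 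When $r$ is a pure power of $2$ the odd factor $m$ vanishes and this propagation breaks down; this is precisely why the theorem leaves that case open.
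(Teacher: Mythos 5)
Your overall strategy --- isolate Proposition \ref{FpvertexOrientationSymmetryOddLengthProp} as the only place where oddness of $r$ was used, replace it with a ``$0$ or $2$'' count, and rerun Theorem \ref{EventualDistributionThm} and Corollary \ref{EventualDistributionCor} --- is exactly the paper's (the replacement is Proposition \ref{EvenFpVertexDistProp}, supported by Lemma \ref{EvenFpVertexDistPropLemma}). But your proof of the count itself is genuinely different. The paper argues graph-theoretically: since the per-cycle count divides $2^\mu$ and $r$ is not a power of $2$, some vertex of the cycle is not defined over $\FF_p$; one then takes the arc between two consecutive $\FF_p$-vertices, applies Frobenius, and glues with duals to obtain a non-backtracking closed walk of length $2i \le r$ that must be $C$ or $\hat{C}$, forcing $i = r/2$. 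You instead use regularity of the $\cl(\mathcal{O}) \times \langle \pi_p \rangle$-action together with the biconditional from the proof of Theorem \ref{FpVertexOrientationSymmetry} to identify the $\FF_p$-vertices on the rim through $v_0$ with $\{[\mathfrak{l}]^j v_0 : r \mid 2j\} = \{v_0, [\mathfrak{l}]^{r/2}v_0\}$. That is cleaner, and notably it nowhere uses that $r$ is not a power of $2$: if it is airtight, it settles the power-of-$2$ case that the paper explicitly leaves open (Section \ref{SecEvenCase} offers only experimental evidence there, because a cycle entirely fixed by Frobenius defeats the reflection argument, whereas your coset computation excludes such a cycle outright since $\langle [\mathfrak{l}] \rangle \not\subseteq \cl(\mathcal{O})[2]$ for $r > 2$). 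You should therefore scrutinize the one delicate input: the ``only if'' direction of the claim that $j(\mathfrak{a} * E) \in \FF_p$ forces $[\mathfrak{a}] \in \cl(\mathcal{O})[2]$, which needs both simple transitivity of the full action (so you should assume $p > M_{\ell,r}$ as well, not merely $p > M_{\ell,r}^{\mathrm{strong}}$) and the compatibility of the oriented-curve action with the Chen--Xue action on $H_p$.

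Separately, your last paragraph misdiagnoses where the hypotheses actually enter. Injectivity of the forgetful map on the vertices of a rim (the paper's Lemma \ref{EvenFpVertexDistPropLemma}) is proved using only $p > M_{\ell,r}^{\mathrm{strong}}$: a repeated $j$-invariant on the rim yields two closed walks of lengths $r_i + r_j = r$ with $r_i, r_j < r$, hence optimal embeddings of orders from $\mathcal{I}_{\ell, r_i}$ with $r_i < r$ into the same maximal order, contradicting Remark \ref{KanekoLemmaRmk}. No odd factor of $r$ is involved, and your claim that a collision must come from some $\mathcal{I}_{\ell, r_i}$ with $r_i \mid m$ is unsupported --- the only constraint is $r_i < r$. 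In the paper, ``$r$ not a power of $2$'' is used solely to guarantee a non-$\FF_p$-vertex anchoring the reflection argument; it plays no role in the forgetful-map step.
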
   

The proof strategy will be the same, except that we will replace the result of Proposition \ref{FpvertexOrientationSymmetryOddLengthProp} with Proposition \ref{EvenFpVertexDistProp} below. This proposition says that instead of $0$ or $1$ $\FF_p$-vertex, each cycle will contain $0$ or $2$ $\FF_p$-vertices. Thus, we will get the following modified version of Theorem \ref{AverageNsThm}:

\begin{theorem}\label{EvenAvgNsThm}
Let $(p_i)_{i=1}^\infty$ be an increasing sequence of consecutive primes, and $r$ be even. Then if $r$ is not a power of $2$ we have 
$$\lim_{n\to \infty} \frac{1}{n}\sum_{i=1}^n n_{s,p_i}(r) = \frac{1}{2}\sum_{d \mid r} \mu(d) \#d(\mathcal{I}_{\ell, n \mid \frac{r}{d}}).$$
\end{theorem}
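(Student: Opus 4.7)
The plan is to parallel the proof of Theorem \ref{AverageNsThm} essentially verbatim, with the single substitution of Proposition \ref{EvenFpVertexDistProp} in place of Proposition \ref{FpvertexOrientationSymmetryOddLengthProp}. The forward-referenced proposition asserts that for $r$ even and not a power of $2$, and for $p > M_{\ell, r}^{\textrm{strong}}$, each cycle in $\mathcal{G}_{\mathcal{O}, \ell}$ with $\mathcal{O} \in \mathcal{I}_{\ell, r}$ contains either $0$ or exactly $2$ $\FF_p$-vertices. I would take this as a black box and trace through where the change from $1$ to $2$ propagates in the bookkeeping.

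First I would derive the even analogue of Theorem \ref{ExplicitNsCountThm}. The auxiliary ingredients carry over unchanged: Lemma \ref{KanekoLemma} together with Remark \ref{KanekoLemmaRmk} still ensures that reductions of distinct orders in $\mathcal{I}_{\ell, r}$ remain distinct under $p > M_{\ell, r}^{\textrm{strong}}$; Corollary \ref{FpVertexOrientationSymmetryCorollary} still equates the $\FF_p$-vertex counts across spine-meeting rims in $\mathcal{G}_{\mathcal{O}, \ell}$; and the total count of $2^\mu$ spine vertices inside $\mathcal{G}_{\mathcal{O}, \ell}$ survives, since its proof invoked only the inertness of $p$ in $\mathcal{O}$ and the uniqueness of the embedding of $\mathcal{O}$ into $\End(\tilde{E})$, both of which are guaranteed once $p > |d|$ for every $d \in d(\mathcal{I}_{\ell, r})$. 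Dividing $2^\mu$ by the new count of $2$ spine vertices per rim yields $2^{\mu-1} = h_2(\Delta)$ spine-meeting rims per $\mathcal{O}$, in place of the $2h_2(\Delta)$ from the odd case. Pushing this through the bijection of Theorem \ref{CycleBijectionThm} and applying M\"obius inversion gives the formula
\[
n_{s,p}(r) \;=\; \sum_{d \mid r} \mu(d) \sum_{\Delta \in d(\mathcal{I}_{\ell, n \mid r/d})} \delta_p(\Delta)\, h_2(\Delta),
\]
i.e.\ Theorem \ref{ExplicitNsCountThm} shorn of its leading factor of $2$.

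Since the sequence $(p_i)$ eventually exceeds $M_{\ell, r}^{\textrm{strong}}$, the tail terms obey this identity, and the argument of Corollary \ref{AverageNsCor} applies unchanged to let me interchange the average with the finite sums over $d$ and $\Delta$. Invoking Lemma \ref{AverageOfDeltaPiLemma} for the inner limit and Theorem \ref{FundGenusTheoryThm} for $h_2(\Delta) = 2^{\mu - 1}$, I would compute
\[
\lim_{n\to\infty}\frac{1}{n}\sum_{i=1}^n n_{s,p_i}(r) \;=\; \sum_{d \mid r} \mu(d) \sum_{\Delta \in d(\mathcal{I}_{\ell, n \mid r/d})} h_2(\Delta)\cdot\frac{1}{2^\mu} \;=\; \frac{1}{2}\sum_{d\mid r}\mu(d)\,\#d(\mathcal{I}_{\ell,n\mid r/d}),
\]
which is the claimed identity. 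The main obstacle lies entirely upstream: one must trust that Proposition \ref{EvenFpVertexDistProp} really does force exactly $2$ spine vertices per spine-meeting rim (rather than a varying power of $2$), and one must confirm that no earlier estimate silently exploited the odd-$r$ hypothesis. The first is provided by the forward reference, and the second is routine since the inertness-based $2^\mu$ count and Kaneko distinctness make no parity assumption on $r$.
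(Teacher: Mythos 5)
Your proposal is correct and follows exactly the route the paper takes: the paper's own proof of this theorem consists precisely of the observation that one substitutes Proposition \ref{EvenFpVertexDistProp} for Proposition \ref{FpvertexOrientationSymmetryOddLengthProp} in the arguments of Theorems \ref{ExplicitNsCountThm}--\ref{AverageNsThm}, and your bookkeeping (the $2^\mu$ spine vertices per $\mathcal{G}_{\mathcal{O},\ell}$ now split two per rim, replacing $2h_2(\Delta)$ by $h_2(\Delta)$ and hence halving the final average) is exactly the intended computation. Your closing checks that no earlier step used the parity of $r$ are also the right ones to make.
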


The only modification of the proofs of Theorem \ref{EventualDistributionThm} and Theorem \ref{AverageNsThm} needed to prove Theorem \ref{EvenCyclesThm} and Theorem \ref{EvenAvgNsThm} is replacing Proposition \ref{FpvertexOrientationSymmetryOddLengthProp} with the following proposition:

\begin{proposition}\label{EvenFpVertexDistProp}
Let $r$ be even and $p > M_{\ell, r}^{\mathrm{strong}}$. If $r$ is not a power of $2$, and $C$ is an $r$-cycle in $\mathcal{G}_\ell$, then the number of $\FF_p$-vertices on $C$ is either $0$ or $2$.
\end{proposition}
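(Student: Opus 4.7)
The plan is to adapt the proof of Proposition \ref{FpvertexOrientationSymmetryOddLengthProp} by replacing its parity-based argument with a geometric classification of involutions on an even $r$-cycle. First, by Theorem \ref{CycleBijectionThm}, the cycle $C$ lifts to a directed rim $C_\mathcal{O}$ in $\mathcal{G}_{\mathcal{O},\ell}$ for some $\mathcal{O} \in \mathcal{I}_{\ell,r}$. The forgetful map $C_\mathcal{O} \to C$ is bijective on vertex sets: if it were not, then both $(E,\iota)$ and $(E,\bar\iota)$ would occur in $C_\mathcal{O}$, forcing the image $C$ to traverse some $j$-invariants more than once in a periodic fashion and therefore to be a proper power of a shorter closed walk, contradicting Definition \ref{IsogenyCycleDef}. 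By Lemma \ref{FrobActionOnCyclesLemma}, $\pi_p(C) \in \{C,\hat{C}\}$, and because $\pi_p^2$ acts trivially on the supersingular $j$-invariants (all of which lie in $\FF_{p^2}$), $\pi_p$ restricts to an involution $\sigma$ on the vertex set of $C$.

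Next, I would classify $\sigma$ as an element of $\Aut(C) \cong D_r$. If $\pi_p(C) = C$, then $\sigma$ preserves the cyclic orientation and is therefore a rotation; the only involutive rotations of an $r$-cycle are the identity (fixing all $r$ vertices) and rotation by $r/2$ (fixing none, available since $r$ is even). If $\pi_p(C) = \hat{C}$, then $\sigma$ reverses orientation and is a reflection, and for even $r$ a reflection of the $r$-cycle fixes either $0$ or $2$ vertices, depending on whether its axis passes through a pair of opposite vertices or a pair of opposite edge midpoints. Combining the two cases, the number $s$ of $\FF_p$-vertices on $C$ lies in $\{0, 2, r\}$.

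It remains to rule out $s = r$, and this is where the hypothesis that $r$ is not a power of $2$ enters. Arguing exactly as in the proof of Proposition \ref{FpvertexOrientationSymmetryOddLengthProp}, the total number of $\FF_p$-vertices in $\mathcal{G}_{\mathcal{O},\ell}$ equals $2^\mu$, while Corollary \ref{FpVertexOrientationSymmetryCorollary} forces every oriented cycle with at least one $\FF_p$-vertex to carry the same number $s'$ of them; hence $s'$ divides $2^\mu$ and is a power of $2$. The bijectivity of the forgetful map yields $s' = s$, so $s$ itself is a power of $2$. Since $r$ is not, $s = r$ is excluded and $s \in \{0, 2\}$ as required. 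I expect the main obstacle to be the bijectivity step: verifying carefully that a self-conjugate lift would contradict the isogeny-cycle condition, and invoking the strengthened bound $M_{\ell,r}^{\mathrm{strong}}$ exactly where Lemma \ref{KanekoLemma} was used in the odd case (via Remark \ref{KanekoLemmaRmk}, applied to the relevant divisors of $r$). The geometric classification and the divisibility argument are then essentially formal.
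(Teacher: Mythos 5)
Your second half is sound and takes a genuinely different route from the paper. The paper rules out the case ``all $r$ vertices in $\FF_p$'' exactly as you do (the $2^\mu$ total from Theorem \ref{ChenXueRootCountThm} is equidistributed over the cycles meeting the spine by Corollary \ref{FpVertexOrientationSymmetryCorollary}, so the per-cycle count is a power of $2$ and cannot be $r$), but it then proceeds by taking the first index $i$ at which the walk returns to the spine, applying Frobenius to the initial segment $\phi_1, \hdots, \phi_i$, and concatenating with the duals $\hat{\phi}_i^{(p)}, \hdots, \hat{\phi}_1^{(p)}$ to produce a non-backtracking closed walk of length $2i \leq r$ through $v$; Remark \ref{KanekoLemmaRmk} forces this walk to be $C$ or $\hat{C}$, hence $2i = r$. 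Your alternative --- observing that $\pi_p$ restricts to an involutive automorphism of the $r$-cycle and that such an automorphism fixes $0$, $2$, or $r$ vertices --- reaches the same conclusion more transparently, and it legitimately rests on Lemma \ref{FrobActionOnCyclesLemma}, whose proof is parity-independent.

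The genuine gap is in your justification of the injectivity of the forgetful map on vertices. If $(E,\iota)$ and $(E,\bar{\iota})$ both lie on the rim cycle, the induced coincidence of $j$-invariants is \emph{palindromic}, not periodic: conjugation intertwines the actions of $[\mathfrak{l}]$ and $[\overline{\mathfrak{l}}]$, so writing the rim as $(E_k,\iota_k) = [\mathfrak{l}]^k(E_0,\iota_0)$ and supposing $(E_m,\iota_m) = (E_0,\bar{\iota}_0)$, one gets $j(E_{m+k}) = j(E_{-k})$ for all $k$. A closed walk whose $j$-invariants carry such a reflection symmetry is not thereby a proper power of a shorter closed walk (nor does it automatically backtrack, since distinct $\ell$-isogenies can join the same pair of $j$-invariants), so Definition \ref{IsogenyCycleDef} is not violated and your argument as written fails. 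The correct argument, which the paper isolates as Lemma \ref{EvenFpVertexDistPropLemma} and which you only gesture at, is arithmetic rather than combinatorial: a repeated $j$-invariant splits $C$ into two non-backtracking closed walks through $j$ of lengths $r_1, r_2 < r$, hence two cyclic endomorphisms of degrees $\ell^{r_1}, \ell^{r_2}$ giving optimal embeddings of orders from $\mathcal{I}_{\ell,r_1}$ and $\mathcal{I}_{\ell,r_2}$ into $\End(E(j))$, contradicting Theorem \ref{KanekoOrderThm} once $p > M_{\ell,r}^{\mathrm{strong}}$. Note also that the bound must cover all lengths $r' < r$, not merely the divisors of $r$ as you suggest, since the two arcs have lengths summing to $r$ but otherwise unconstrained.
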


Before proving Proposition \ref{EvenFpVertexDistProp}, we prove the following Lemma:

\begin{lemma}\label{EvenFpVertexDistPropLemma}
Let $p > M_{\ell, r}^{\mathrm{strong}}$, and $C$ be an isogeny cycle in $\mathcal{G}_\ell$ of length $r$. Let $v$ be a vertex in $C$. Then there is only one incoming and one outgoing edge connected to $v$ in $C$. Further, if $C'$ is the cycle in $\mathcal{G}_{\mathcal{O}, \ell}$ corresponding to $C$ via Theorem \ref{CycleBijectionThm}, then the map from $C'$ to $C$ obtained by forgetting orientations is injective on vertices. 
\end{lemma}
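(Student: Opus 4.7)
The plan is to prove both statements by establishing that the vertex map $C' \to C$ is injective; the edge-count claim then follows immediately, since a vertex $v$ visited at two distinct positions of $C$ would lift to two distinct oriented vertices of $C'$ above the same underlying $E = v$.

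I argue by contradiction: suppose $(E, \iota_1) \neq (E, \iota_2)$ are two vertices of $C'$ lying above the same $E$. By Lemma \ref{CyclesInGOdellLengthrLemma}, $C'$ is traversed by the $[\mathfrak{l}]$-action, so $(E, \iota_2) = [\mathfrak{l}]^k * (E, \iota_1)$ for some $0 < k < r$. Composing the isogenies along this length-$k$ sub-walk and forgetting orientations yields the endomorphism $\psi := \phi_{\mathfrak{l}^k}^{(E, \iota_1)} \in \End(E)$ of degree $\ell^k$, whose kernel $E[\iota_1(\mathfrak{l}^k)]$ is cyclic of order $\ell^k$, so $\psi$ is primitive (not divisible by the scalar $\ell$). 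Since $[\mathfrak{l}]$ has order $r > k$ in $\cl(\mathcal{O})$, the ideal $\mathfrak{l}^k$ is non-principal, and so $\psi \notin \iota_1(\mathcal{O})$; by the optimality identity $\iota_1(\mathcal{O}) = \iota_1(K) \cap \End(E)$ this forces $\psi \notin \iota_1(K)$, so $\QQ(\psi)$ is an imaginary quadratic field distinct from $\iota_1(K)$. The integral closure $\mathcal{O}_1$ of $\ZZ[\psi]$ in $\QQ(\psi)$ is then a second optimally embedded imaginary quadratic order in $\End(E)$, distinct from $\mathcal{O}$. A short check using $\psi \bar{\psi} = \ell^k$ and the primitivity of $\psi$ shows that $(\ell)$ must split in $\mathcal{O}_1$ and that the prime above $\ell$ has order $m \mid k$, so $\mathcal{O}_1 \in \mathcal{I}_{\ell, m}$ for some $m < r$.

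The main obstacle is then to extract the desired contradiction from Kaneko's theorem (Theorem \ref{KanekoOrderThm}) using the hypothesis $p > M_{\ell, r}^{\mathrm{strong}}$. The plan is to invoke Remark \ref{KanekoLemmaRmk} applied to the finite set of cycle lengths $\{1, 2, \hdots, r\}$, running the same argument as in Lemma \ref{KanekoLemma}: for $p$ above the resulting threshold -- which in view of Remark \ref{EvenBoundRmk} is effectively captured by $p > M_{\ell, r}^{\mathrm{strong}}$ -- no maximal order of $B_{p, \infty}$ can contain two distinct optimally embedded orders drawn from $\bigcup_{n \le r} \mathcal{I}_{\ell, n}$. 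This directly contradicts the joint embedding of $\mathcal{O}$ and $\mathcal{O}_1$ in $\End(E)$, proving the injectivity of $C' \to C$ and hence both claims of the lemma.
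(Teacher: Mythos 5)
Your overall strategy is the paper's: a repeated underlying curve on $C'$ produces a cyclic $\ell$-power endomorphism of degree $\ell^k$ with $k<r$, hence a second optimally embedded quadratic order in $\End(E)$, contradicting the Kaneko-type uniqueness of Lemma \ref{KanekoLemma}/Remark \ref{KanekoLemmaRmk}. The execution differs in one way that matters for the hypotheses, and has one local slip. First, the paper splits $C$ at the repeated vertex into \emph{two} closed sub-walks, each of length strictly less than $r$, so both resulting orders lie in $\mathcal{I}_{\ell,r_i}$ with $r_i<r$ --- which is precisely the range that $M_{\ell,r}^{\mathrm{strong}}=\max\{M_{\ell,r_i}:r_i<r\}$ is built to control. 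Your contradiction instead pits $\mathcal{O}\in\mathcal{I}_{\ell,r}$ against the new order $\mathcal{O}_1\in\mathcal{I}_{\ell,m}$, so the Kaneko inequality you need involves a discriminant from $d(\mathcal{I}_{\ell,r})$ itself; the stated hypothesis $p>M_{\ell,r}^{\mathrm{strong}}$ does not literally bound such products (Remark \ref{EvenBoundRmk} only \emph{expects} $M^{\mathrm{strong}}_{\ell,r}=M_{\ell,r}$), so you should either add $p>M_{\ell,r}$ or restructure as the paper does. Second, the ``integral closure of $\ZZ[\psi]$ in $\QQ(\psi)$'' is the maximal order of $\QQ(\psi)$, which need not lie in $\End(E)$ at all; the order you want is $\QQ(\psi)\cap\End(E)$, which is optimally embedded by definition and contains $\ZZ[\psi]$, and for which your ``short check'' (using $\ell\nmid\operatorname{tr}(\psi)$, forced by cyclicity of $\ker\psi^2=E[\iota_1(\mathfrak{l}^{2k})]$) does establish $\ell$-fundamentality, splitting of $\ell$, and order dividing $k$. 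On the positive side, your use of the class group action to realize the repeated vertex as $[\mathfrak{l}]^k*(E,\iota_1)$ gives a cleaner proof that the shorter endomorphism is cyclic (its kernel is $E[\iota_1(\mathfrak{l}^k)]$) than the graph-theoretic route, which must worry about backtracking at the splicing point; and your non-principality/optimality argument makes explicit why $\mathcal{O}_1\neq\mathcal{O}$, a point the paper leaves implicit.
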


\begin{proof}
Under the bijection of Theorem \ref{CycleBijectionThm}, there is an order $\mathcal{O} \in \mathcal{I}_{\ell, r}$, and a cycle $C'$ in $\mathcal{G}_{\mathcal{O},  \ell}$ that corresponds to $C$ after forgetting orientations. The conclusion of the Lemma holds for $C'$ by Proposition \ref{OrientedVolcanoeProp}, and forgetting orientations preserves sources and targets of edges, so we need only show that forgetting orientations is injective on vertices. \\

Suppose instead that there are two vertices in $C'$ with the same $j$-invariant. Let $j$ be this $j$-invariant. We then obtain two cycles in $\mathcal{G}_\ell$ through $j$ upon forgetting orientations.  Both of these cycles have length at most $r$. By composing the isogenies in these cycles, we obtain two cyclic endomorphisms in $\End(E(j))$, of degree $\ell^{r_i}, \ell^{r_j}$ for $r_i, r_j < r$. These endomorphisms then give embeddings of imaginary quadratic orders in $\mathcal{I}_{\ell, r_i}$ and $\mathcal{I}_{\ell, r_j}$ into $\End(E(j))$. This contradicts Remark \ref{KanekoLemmaRmk}, because we have chosen $p > M_{\ell, r}^{\mathrm{strong}}$. Thus we see that forgetting orientations is injective on vertices.
\end{proof}

We now return to the proof of Proposition \ref{EvenFpVertexDistProp}.

\begin{proof}[Proof (Proposition \ref{EvenFpVertexDistProp})]
By Corollary \ref{FpVertexOrientationSymmetryCorollary}, there is a constant $K$ such that each $r$-cycle in $\mathcal{G}_{\mathcal{O}, \ell}$ containing an $\FF_p$-vertex contains $K$ $\FF_p$-vertices. Since the total number of such vertices is a power of $2$, the number of vertices on each cycle cannot be $r$, since $r$ is not a power of $2$. By Lemma \ref{EvenFpVertexDistPropLemma}, each $r$-cycle in $\mathcal{G}_{\ell}$ also contains $r$ vertices, and the number of $\FF_p$-vertices is again a power of $2$. Thus each $r$-cycle in $\mathcal{G}_\ell$ contains $\FF_{p^2}$-vertices. Let $C$ be such a cycle containing at least one $\FF_p$-vertex. We will show that $C$ contains exactly $2$ $\FF_p$-vertices. \\

Let $C$ be given by a sequence of isogenies $\phi_1, \phi_2, \hdots, \phi_r$. Since we are forgetting basepoint in Definition \ref{IsogenyCycleDef}, we may assume that the domain of $\phi_1$ is an $\FF_p$-vertex, and the codomain is an $\FF_{p^2}$-vertex. Let $i$ be the first index such that the codomain of $\phi_i$ is an $\FF_p$-vertex, and let $v$ be this vertex. We will show that $i = \frac{r}{2}$, from which it will follow that there are exactly two $\FF_p$-vertices in $C$. \\

By changing basepoint to $v$ if necessary, we may assume that $i \leq \frac{r}{2}$. Applying Frobenius to the sequence of isogenies $\phi_1, \phi_2, \hdots, \phi_i$, we obtain a sequence $\phi_1^{(p)}, \phi_2^{(p)}, \hdots, \phi_i^{(p)}$, which begins at the domain of $\phi_1$, and ends at $v$. The vertices in between in this sequence are distinct from the vertices traversed in the sequence $\phi_1, \phi_2, \hdots, \phi_i$ by the assumption that $i$ is the first index where the codomain of $\phi_i$ is defined over $\FF_p$. Taking duals therefore gives a closed walk $\phi_1, \hdots, \phi_i, \hat{\phi}_i^{(p)}, \hdots, \hat{\phi}_1^{(p)}$ with no-backtracking through $v$. This is an isogeny cycle of length $2i \leq r$, which we denote by $C'$. By Remark \ref{KanekoLemmaRmk}, our assumption that $p > M_{\ell, r}^{\mathrm{strong}}$ guarantees that this cycle is either $C$ or $\hat{C}$. Since $C$ and $\hat{C}$ have the same vertices, and there are exactly $2$ $\FF_p$-vertices in $C'$, we see that $C$ has exactly $2$ $\FF_p$-vertices. This finishes the proof. \\
\end{proof}

See Figures \ref{Ell3_evenconj_fig} and \ref{Ell2_evenconj_fig} for experimental data verifying the results of Theorem \ref{EvenAvgNsThm}. We note that Figure \ref{Ell3_evenconj_fig} provides computational evidence that the results of Theorem \ref{EvenCyclesThm} and Theorem \ref{EvenAvgNsThm} both hold even in the case where $r$ is a power of $2$. In this case however, the possibility of cycles through the spine that are completely fixed by Frobenius makes the graph theoretic arguments in Propostion \ref{EvenFpVertexDistProp} fail to provide a proof. \\

We end with a remark on an interesting consequence of Proposition \ref{EvenFpVertexDistProp}:

\begin{remark}\label{SpineVerticesStickTogetherRmk}
By taking $r = 2n$, Proposition \ref{EvenFpVertexDistProp} says that there are infinitely many primes $p$ where $\mathcal{G}_\ell$ contains spine vertices of distance at most $n$ from each other. The vertices on the spine are exactly the vertices whose endomorphism rings contain an embedded copy of $\ZZ[\sqrt{-p}]$. Boneh and Love studied the distribution of vertices with small endomorphisms in \cite{BonehLove20}, and found that small endomorphisms have a repelling effect. In this context, our result says that vertices with endomorphisms of degree roughly $\sqrt{p}$ can remain close together in $\mathcal{G}_\ell$.
\end{remark}

\begin{figure}

\centering
\subfloat[4-cycles in the $3$-isogeny graph]{\label{Ell3_evenconj_fig}
\centering
\includegraphics[width=0.46\linewidth]{"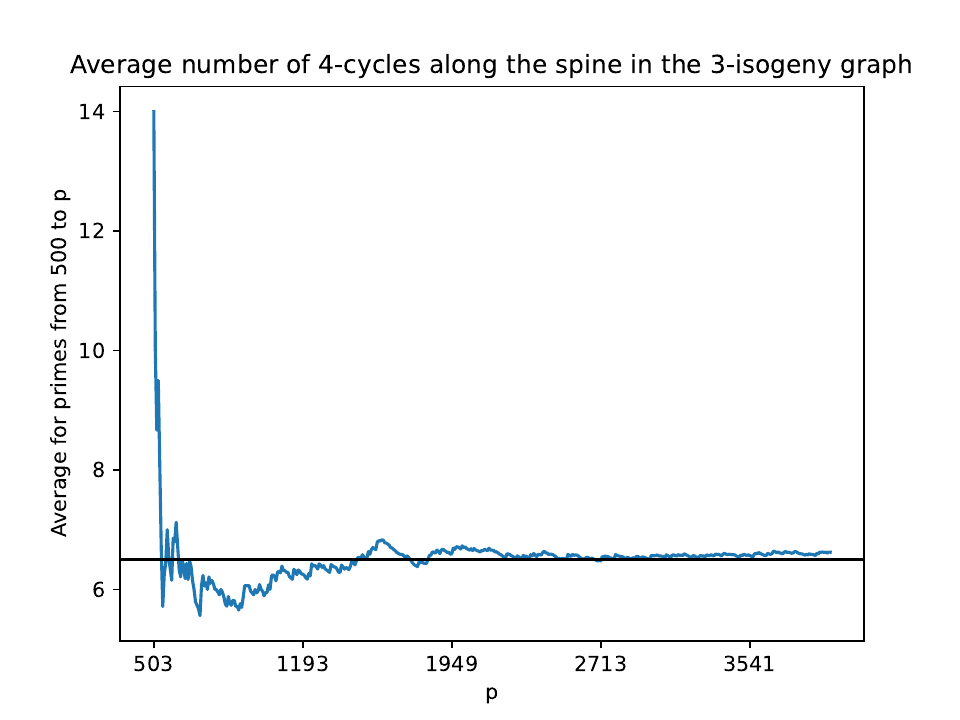"}
}
\hfill
\subfloat[6-cycles in the $2$-isogeny graph]{\label{Ell2_evenconj_fig}
\centering

\includegraphics[width=0.46\linewidth]{"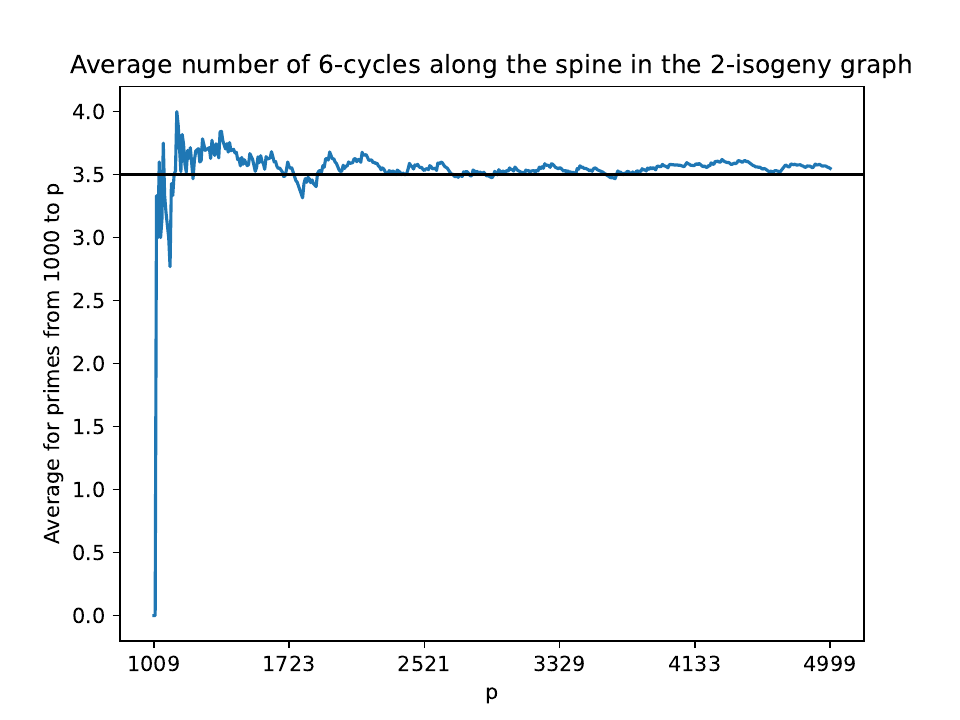"}
}

\caption{Average numbers of $4$-cycles (resp. $6$-cycles) along the spine in the $3$-isogeny (resp. $2$-isogeny) graph}
\end{figure}

\section*{}

\printbibliography
\end{document}